\newcommand{\Rset}{\mathbb{R}}
\newcommand{\Nset}{\mathbb{N}}
\newcommand{\Cset}{\mathbb{C}}
\newcommand{\PP}{\mathbb{P}}
\newcommand{\EE}{\mathbb{E}}
\newcommand{\DS}{\displaystyle} 
\newtheorem{Theorem}{Theorem}[section]
\newtheorem{Proposition}[Theorem]{Proposition}
\newtheorem{Lemma}[Theorem]{Lemma}
\newtheorem{Corollary}[Theorem]{Corollary}
\newtheorem{Remark}[Theorem]{Remark}
\title{The Kolmogorov operator associated to a Burgers SPDE in spaces of continuous functions}
\author{Luigi Manca\\Dipartimento di Matematica P. e A.,\\Universit\`a di Padova,\\Via Trieste 63,\\ 35121 Padova, Italy\\ \emph{E-mail: manca@math.unipd.it}}
\begin{document}

\maketitle

\begin{abstract}
We are concerned with a viscous Burgers equation forced by a perturbation of white noise type.
We study the corresponding transition semigroup 
 in a space of continuous functions weighted by a proper potential, and we show that the  infinitesimal generator  
 is the closure (with respect to a suitable topology) of the Kolmogorov operator associated to the stochastic equation.
In the last part of the paper we use this result to solve the  
corresponding Fokker-Planck equation.

{\bf Key words:} Burgers equation, white noise, Kolmogorov operator, transition semigroup, Fokker-Planck equation. 

{\bf MSC 2000:} 35Q53 (primary), 60H15, 35R15, 47D07 (secondary).
%
\end{abstract}


\section{Introduction and preliminaries} 

We consider  the  stochastic Burgers equation in the interval $[0,1]$  with Dirichlet boundary conditions
perturbed by a space-time white noise
\begin{equation}
\label{e.B.1}
     \left\{\begin{array}{l}
           \displaystyle{dX=\left(D^2_\xi X +\frac12\;
         D_\xi(X^{2})\right)dt       +      dW,\quad\xi \in [0,1],\;t\geq 0,}\\
          \\
      X(t,0)=X(t,1)=0  
\\
    \\
     X(0,\xi )=x(\xi ),\;\xi \in [0,1],
\end{array}\right.
\end{equation}
where $x\in  L^2(0,1)$ %
and  $W$ is a  cylindrical  Wiener process defined in a probability space $(\Omega ,{\cal F},\PP)$   and with   values in $L^2(0,1)$.\bigskip

The unknown $X$ is a real valued process depending on $\xi\in [0,1]$ and $t\geq0$.
It is known that there exists a unique solution with paths in $C([0,T],L^2(0,1))$ (see \cite{DPDT}).
In the paper \cite{DPD07} it has been proved that all the moments of the solution are finite.
Starting by this result, we study the associated transition semigroup and the associated Kolmogorov operator in spaces of continuous functions with polynomial growth.

Let us write problem \eqref{e.B.1} in an abstract form.
 We  denote by  $L^p(0,1),\;p\ge 1,$ 
the space of all real valued Lebesgue measurable functions %
 $x:[0,1]\to\Rset$ such that
\[
      |x|_{p}:=\left(\int_0^{1}|x(\xi)|^pd\xi  \right)^{1/p}<+\infty,
\]
and by $L^\infty(0,1)$ the space of all real valued Lebesgue measurable essentially bounded functions endowed with the usual supremum norm $|x|_\infty$, $x\in L^\infty(0,1)$.
We consider the separable Hilbert space $ H:=L^2(0,1)$ (norm $|\cdot|_2$ and inner product $\langle x,y\rangle=|xy|_1$, $x,y\in H$). 
As usual, $H^k(0,1)$, $k\in\Nset$, is the Sobolev space of all functions in $H$ whose differentials  belong to $H$ up to the order $k$, and $H^1_0(0,1)$ is the subspace of $H^1(0,1)$ of all functions whose trace at $0$ and $1$ vanishes. 
We define the unbounded self-adjoint operator $A$ in  $H$ by
\[
  Ax=\frac{\partial^2}{\partial\xi^2}x,\quad x\in D(A)=H^2(0,1)\cap H^1_0(0,1).
\]
Finally, we denote by $\{e_k\}_{k\in \Nset}$ the orthonormal system in $H$ given by the eigenvectors of $A$
\[
  e_k(\xi)=\sqrt{\frac{2}{\pi}}\sin(k\xi),\quad \xi \in [0,1],\, k\in \Nset.
\]
The cylindrical Wiener process $W(t)$  is given (formally) by
\[
    W(t)=\sum_{k=1}^\infty \beta_k(t)e_k,\quad t\ge 0,
\]
where $\{\beta_k\}$ is a  sequence of mutually independent standard Brownian
motions on a stochastic basis
$(\Omega,\mathcal F,(\mathcal F_t)_{t\geq0},\PP)$. 
We set
\[
      b(x)=\frac12\;D_\xi(x^{2}),\quad x\in D(b)= H^1_0(0,1).
\]
Thanks to the introduced notations, we write problem \eqref{e.B.1} as
\begin{equation}   \label{e.B.3}
 \left\{\begin{array}{lll}
     dX&=&(AX+b(X))dt+ dW(t),\\\\
     X(0)&=&x,\quad x\in H
 \end{array}\right. 
\end{equation}

The unique solution of  \eqref{e.B.3} is denoted by $X(t,x)$.
The corresponding transition semigroup $P_t$, $t\geq0$ is given by
\begin{equation} \label{e.B.38a}
 P_t\varphi(x)=\EE[\varphi(X(t,x))],\quad t\geq0,\,\varphi\in \mathcal C_b(H),\, x\in H,
\end{equation}
where $\mathcal C_b(H)$ is the Banach space of all continuous and bounded real valued functions $\varphi:H\to \Rset$ endowed with the supremum norm 
\[
   \|\varphi\|_0=\sup_{x\in H}|\varphi(x)|
\] 
and $\EE$ means expectation.
In \cite{DPD07} it has been proved the following 
\begin{Proposition}\label{p.B.7}
For any $p\geq2,\,k\geq1$,   $T>0$ there exists a constant $c_{ p,k,T}$ such that  
 \[
  \EE\left[\sup_{t\in [0,T]} |X(t,x)|_{p}^k   \right]\leq c_{p,k,T}(1+|x|_{p}^k).
\]
\end{Proposition}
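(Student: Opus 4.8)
The plan is to split the solution as $X(t,x)=Y(t,x)+Z(t)$, where $Z(t)=\int_0^t e^{(t-s)A}\,dW(s)$ is the stochastic convolution and $Y=X-Z$ solves, pathwise for $\PP$-a.e.\ $\omega$, the random parabolic equation
\[
  \partial_t Y = AY + \tfrac12 D_\xi\!\big((Y+Z)^2\big), \qquad Y(0)=x .
\]
Since $Z$ does not depend on $x$, I would first recall (via the factorization method of Da Prato and Zabczyk) that in space dimension one $Z$ admits a version in $C([0,T];C_0([0,1]))$ and that $\EE\big[\sup_{t\le T}|Z(t)|_\infty^{k}\big]<\infty$ for every $k\ge1$ and $T>0$ (Fernique/Borell--TIS applied to the Gaussian field $Z$ on $[0,T]\times[0,1]$). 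As this is $x$-independent and $|X(t,x)|_p\le|Y(t,x)|_p+|Z(t)|_\infty$, it then suffices to bound $\EE\big[\sup_{t\le T}|Y(t,x)|_p^{k}\big]$ by $c_{p,k,T}(1+|x|_p^{k})$; moreover, writing $|Y|_p^{k}=(|Y|_p^{p})^{k/p}$ and using H\"older's inequality, it is enough to do this for $k=pm$ with $m\in\Nset$, i.e.\ to control integer powers of $u(t):=|Y(t,x)|_p^{p}$.

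For the latter I would test the equation for $Y$ against $|Y|^{p-2}Y$ (admissible since $p\ge2$; rigorously one works on a Galerkin or smoothed approximation and passes to the limit) and integrate over $[0,1]$. The linear term produces the dissipation $\langle|Y|^{p-2}Y,AY\rangle=-(p-1)\mathcal D$ with $\mathcal D:=\int_0^1|Y|^{p-2}|D_\xi Y|^2\,d\xi$, and since $\mathrm{sgn}(Y)|Y|^{p/2}\in H^1_0(0,1)$ with squared gradient norm $\tfrac{p^2}{4}\mathcal D$, Poincar\'e's inequality gives $\mathcal D\ge \tfrac{4\lambda_1}{p^2}\,u$, i.e.\ a genuine decay rate. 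The crucial structural fact is that the ``pure Burgers'' part of the nonlinearity does not contribute:
\[
  \Big\langle|Y|^{p-2}Y,\ \tfrac12 D_\xi(Y^{2})\Big\rangle
  = \frac1{p+1}\int_0^1 D_\xi\!\big(|Y|^{p}Y\big)\,d\xi = 0
\]
by the Dirichlet boundary condition, and it is precisely this cancellation that makes an $L^p$ estimate feasible. Expanding $(Y+Z)^2=Y^2+2YZ+Z^2$, the remaining cross terms $\langle|Y|^{p-2}Y,D_\xi(YZ)\rangle$ and $\langle|Y|^{p-2}Y,\tfrac12 D_\xi(Z^2)\rangle$ are integrated by parts and estimated with Young's and H\"older's inequalities, absorbing a small multiple of $\mathcal D$ and leaving, at worst, terms of the form $c\,|Z(t)|_\infty^{2}\,u(t)$ and $c\,(1+|Z(t)|_\infty^{q})$ for a suitable $q=q(p,m)$. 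After raising to the $m$-th power and one more application of Young's inequality, this yields a pathwise differential inequality
\[
  \frac{d}{dt}\,u(t)^{m} \le \big(-c_1 + c_2\,|Z(t)|_\infty^{2}\big)\,u(t)^{m} + c_3\big(1+|Z(t)|_\infty^{qm}\big),
\]
with deterministic constants $c_1>0$, $c_2,c_3\ge0$.

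Gronwall's lemma then gives $\sup_{t\le T}u(t)^{m}\le \exp\!\big(c_2\!\int_0^T|Z(s)|_\infty^{2}ds\big)\big(|x|_p^{pm}+c_3\!\int_0^T(1+|Z(s)|_\infty^{qm})ds\big)$, whose expectation is finite as long as $c_2T$ lies below the Fernique threshold of $\sup_{s\le T,\,\xi\in[0,1]}|Z(s,\xi)|$; this proves the estimate for $T$ small. For arbitrary $T$ one splits $[0,T]$ into finitely many short subintervals and iterates, using the Markov property of $X(t,x)$ and the already established short-time bound (the constant $c_{p,k,T}$ deteriorating with the number of subintervals, hence with $T$). \textbf{The main obstacle} is exactly the interplay between the quadratic nonlinearity and the low spatial regularity of $Z$ (only H\"older continuous of exponents $<1/2$, so no derivative can be moved onto $Z$): the cross term $\langle|Y|^{p-2}Y,D_\xi(YZ)\rangle$ unavoidably produces the multiplicative factor $|Z(t)|_\infty^{2}$ in front of $u(t)$, which is why a direct global-in-time Gronwall argument does not close and the short-time estimate must be iterated via the Markov property.
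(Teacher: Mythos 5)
The paper itself contains no proof of Proposition \ref{p.B.7}: it is stated as a result quoted from \cite{DPD07}, so your argument can only be measured against the literature the paper points to. Measured that way, your strategy is the classical one and is correct in outline: the splitting $X=Y+W_A$, the $L^p$ energy estimate obtained by testing with $|Y|^{p-2}Y$, the cancellation $\int_0^1|Y|^{p}D_\xi Y\,d\xi=0$ under the Dirichlet conditions, the absorption of the cross terms into the dissipation term $\mathcal D$, the reduction to $k=pm$, the Fernique/Borell control of the Gaussian supremum of $W_A$, and the short-time estimate iterated through the Markov (flow) property are all sound steps, and this is essentially how such moment bounds are derived in \cite{DPDT} and \cite{DPD07}.

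Two points deserve care. First, the rigorous justification of the $L^p$ energy identity should not be done with the spectral Galerkin scheme of Section 6: for $p>2$ the projection $P_m$ destroys the convective cancellation (only the $L^2$ identity $\langle b_m(x),x\rangle=0$ survives), so the correct regularization is the other branch of your parenthetical remark, namely smoothing the noise (or the data) pathwise while keeping the true nonlinearity, and then passing to the limit using continuous dependence as in Theorem \ref{t.B.3.2}. Second, the ``main obstacle'' you describe --- the factor $\exp\bigl(c\int_0^T|W_A(s)|_\infty^2\,ds\bigr)$, which forces $T$ small and the subsequent iteration --- is an artifact of the Young step $\theta\,\mathcal D^{1/2}u^{1/2}\le\varepsilon\mathcal D+c\,\theta^2u$ rather than intrinsic: the pathwise bound of Lemma \ref{l.B.3.1}, quoted in the paper from \cite{DPDT}, reads $|Y(t,x)|_p\le c_p(\theta^3+|x|_p)e^{1+2p\theta t}$ with $\theta=\sup_{t\in[0,T]}|W_A(t)|_\infty$ appearing only \emph{linearly} in the exponent, and a Gaussian supremum has finite exponential moments of every order at the first power (only the square is restricted by the Fernique threshold). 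Raising that bound to the $k$-th power, using $|X(t,x)|_p\le|Y(t,x)|_p+\theta$ and H\"older, gives Proposition \ref{p.B.7} for every $T$ in a few lines, with no small-time restriction and no Markov iteration. Your version still closes, since the iteration you outline is legitimate, but it is longer than necessary given the pathwise estimate the paper already quotes.
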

Then the semigroup $P_t$ is extendible to spaces of real valued continuous functions with polynomial growth.
In particular, $P_t$ acts on the space 
$\mathcal C_{b,V}(L^6(0,1))$ of all  continuous functions  $\varphi:L^6(0,1)\to \Rset$ such that the function
\[
   L^6(0,1)\to\Rset,\quad x\mapsto \frac{ \varphi(x)}{1+V(x)} 
\]
is bounded and  
where 
\[
  V(x):=  |x|_{6}^8|x|_{4}^2,\quad x\in L^6(0,1).
\]
The space $\mathcal C_{b,V}(L^6(0,1))$, endowed with the norm 
\[
   \|\varphi\|_{0,V}:=\sup_{x\in L^6(0,1)}  \frac{|\varphi(x)|}{1+V(x)}  
\]
is a Banach space.

The semigroup $P_t$  is not strongly continuous in $\mathcal C_{b,V}(L^6(0,1))$ (neither in $\mathcal C_b(H)$).
However, it is strongly continuous with respect to weaker topologies.
We follow here the approach of the $\pi$-convergence, suggested by Priola in \cite{Priola}.
The semigroup $P_t$ can be also studied in other frameworks, for instance with respect to uniform convergence on compact sets (see \cite{Cerrai}, \cite{GK01} and Proposition \ref{p.B.3.4} below).
We define the infinitesimal generator of $P_t$ by setting 
\begin{equation}  \label{e.B.0}
\begin{cases}
\DS D(K,\mathcal C_{b,V}(L^6(0,1)))=\bigg\{ \varphi \in \mathcal C_{b,V}(L^6(0,1)): \exists g\in \mathcal C_{b,V}(L^6(0,1)),   \\ 
  \DS  \qquad  \lim_{t\to 0^+} \frac{ {P}_t\varphi(x)-\varphi(x)}{t}= g(x),\,x\in L^6(0,1),\;
    \sup_{t\in(0,1)}\left\|\frac{ {P}_t\varphi-\varphi}{t}\right\|_{0,V}<\infty \bigg\}\\    
   {}   \\
  \DS  {K}\varphi(x)=\lim_{t\to 0^+} \frac{ {P}_t\varphi(x)-\varphi(x)}{t},\quad \varphi\in D(K,\mathcal C_{b,V}(L^6(0,1))),\,x\in L^6(0,1).
\end{cases}
\end{equation} 

The Kolmogorov operator associated to equation \eqref{e.B.3} is formally given by
\begin{equation} \label{e.B.4}
 K_0\varphi(x)= \frac12\textrm{Tr}\big[D^2\varphi(x)\big]+\langle x,AD\varphi(x)\rangle-\frac12\langle D_\xi D\varphi(x),x^2\rangle,
\end{equation}
where $\varphi:L^p(0,1)\to \Rset$ is a suitable function and Tr means trace. 

The main result of this paper consists in Theorem \eqref{T.B.2} below where we show that $(K,D(K,\mathcal C_{b,V}(L^6(0,1))))$ is the closure, with respect to the $\pi$-convergence, of the operator $K_0$ define on the domain $\mathcal E_A(H)$ (the set of {\em exponential functions}), 
  which consists of the linear span of the real and imaginary part of the functions 
\[
  H\to \Cset,\quad x\mapsto e^{i\langle x,h\rangle}.
\quad h\in D(A). 
\]
In other words, we show the set of exponential functions $\mathcal E_A(H)$ is a core for $(K,D(K,\mathcal C_{b,V}(L^6(0,1))))$, and $K\varphi=K_0\varphi$, $\forall \varphi\in \mathcal E_A(H)$.

Apart for the interest in having a better understanding of the  operator $K_0$, the main motivation is to solve the corresponding Fokker-Planck equation
\[
   \begin{cases}
  \DS    \frac{d}{dt} \mu_t= K_0^*\mu_t,\quad t\geq0\\
      \mu_0=\mu,
   \end{cases}
\]
where the family of measures $\mu_t$, $t\geq0$ in the unknown and $\mu$ is a given Borel measure on $H$.
The meaning of this problem will be explained by Theorem \ref{T.B.1.4} below. 
Before introducing it, we need some notation.

%
If $E$ is a Banach space, $\mathcal M(E)$ is the set of all  Borel finite measures on $E$.
If $\mu\in \mathcal   M(E)$, we denote by $|\mu|_{TV}$ the total variation measure of $\mu$.
We shall denote by $\mathcal M_V(L^6(0,1))$ the set of all $\mu\in \mathcal M(L^6(0,1))$ such that
\[
   \int_{L^6(0,1)}V(x)|\mu|_{TV}(dx)<\infty.
\]
The second main result of this paper is 
\begin{Theorem} \label{T.B.1.4}
For any    $\mu \in \mathcal M_V(L^6(0,1))$ %
there exists an unique family of measures $\{\mu_t,\;t\geq0\}\subset \mathcal M_V(L^6(0,1))$  fulfilling 
\begin{equation} \label{e.B.5b}
\int_0^T\left(\int_{L^6(0,1)}(1+ V(x))|\mu_t|_{TV}(dx)\right)dt < \infty,\quad \forall T>0 
\end{equation}
and the Fokker-Planck equation 
\begin{equation}\label{e.B.2.5a}
\int_{L^6(0,1)}\varphi(x)\mu_t(dx)- \int_{L^6(0,1)}\varphi(x)\mu(dx)=
\int_0^t\left( \int_{L^6(0,1)}K_0\varphi(x)\mu_s(dx)   \right)ds,  
\end{equation}
$t\geq0,\,\varphi \in\mathcal E_A(H)$.
Moreover,  the  solution  is given by $P_t^*\mu$, ${t\geq0}$.
\end{Theorem}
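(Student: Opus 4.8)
The plan is to show that $\mu_t:=P_t^*\mu$ — the dual semigroup, determined by $\int_{L^6(0,1)}\varphi\,d(P_t^*\mu)=\int_{L^6(0,1)}P_t\varphi\,d\mu$ — is the unique solution of \eqref{e.B.2.5a} within the class \eqref{e.B.5b}.

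\emph{Existence.} First one checks $P_t^*\mu\in\mathcal M_V(L^6(0,1))$ and \eqref{e.B.5b}. Both the constant $1$ and the potential $V$ lie in $\mathcal C_{b,V}(L^6(0,1))$, and $P_t$ is bounded on this space with norm $\le c_T$ for $t\in[0,T]$; hence $\int(1+V)\,d|P_t^*\mu|_{TV}\le\int P_t(1+V)\,d|\mu|_{TV}\le 2c_T\int(1+V)\,d|\mu|_{TV}<\infty$, and this bound is integrable in $t$ on $[0,T]$, which gives \eqref{e.B.5b}. Next, for $\varphi\in\mathcal E_A(H)$ we have $\varphi\in D(K,\mathcal C_{b,V}(L^6(0,1)))$ with $K\varphi=K_0\varphi$ (part of Theorem \ref{T.B.2}; for exponential functions it also follows by applying It\^o's formula to $\varphi(X(t,x))$ and taking expectations), so the semigroup identity $P_t\varphi(x)-\varphi(x)=\int_0^tP_s(K_0\varphi)(x)\,ds$ holds pointwise. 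One checks $K_0\varphi\in\mathcal C_{b,V}(L^6(0,1))$ since $K_0\varphi$ has at most quadratic growth in $|x|_2$, which is dominated by $1+V$. Integrating the identity against $\mu$, exchanging the order of integration — legitimate because $|P_s(K_0\varphi)(x)|\le 2c_T\|K_0\varphi\|_{0,V}(1+V(x))$ on $[0,T]$ and $\mu\in\mathcal M_V(L^6(0,1))$ — and using $\int P_s(K_0\varphi)\,d\mu=\int K_0\varphi\,d(P_s^*\mu)$, one obtains exactly \eqref{e.B.2.5a}.

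\emph{Uniqueness.} Let $\{\mu_t\}$ satisfy \eqref{e.B.5b} and \eqref{e.B.2.5a}. The key step is to extend \eqref{e.B.2.5a} from $\mathcal E_A(H)$ to every $\varphi\in D(K,\mathcal C_{b,V}(L^6(0,1)))$: since $\mathcal E_A(H)$ is a core (Theorem \ref{T.B.2}), pick $\varphi_n\in\mathcal E_A(H)$ with $\varphi_n\to\varphi$ and $K_0\varphi_n\to K\varphi$ $\pi$-convergently, i.e.\ pointwise with $\sup_n(\|\varphi_n\|_{0,V}+\|K_0\varphi_n\|_{0,V})<\infty$, and pass to the limit in \eqref{e.B.2.5a} by dominated convergence, dominating by a multiple of $1+V$ and using \eqref{e.B.5b}. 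In particular each $t\mapsto\int\varphi\,d\mu_t$ is absolutely continuous, from which one extracts the time-regularity of $t\mapsto\mu_t$ needed below. Now fix $T>0$, $\psi\in\mathcal E_A(H)$, and put $u(s):=P_{T-s}\psi$, $s\in[0,T]$. By the $\pi$-semigroup calculus, $u(s)\in D(K,\mathcal C_{b,V}(L^6(0,1)))$, $\frac{d}{ds}u(s)=-Ku(s)=-P_{T-s}(K_0\psi)$ in the $\pi$-sense, and $\sup_{s\in[0,T]}\|Ku(s)\|_{0,V}<\infty$. For $0\le s<s+h\le T$, write
\[
\int u(s+h)\,d\mu_{s+h}-\int u(s)\,d\mu_s=\Big(\int u(s+h)\,d\mu_{s+h}-\int u(s+h)\,d\mu_s\Big)+\int\big(u(s+h)-u(s)\big)\,d\mu_s;
\]
applying the extended Fokker--Planck equation with test function $u(s+h)$ to the first bracket, and the pointwise identity $u(s+h)(x)-u(s)(x)=-\int_s^{s+h}(Ku(r))(x)\,dr$ together with Fubini to the second, one gets
\[
\int u(s+h)\,d\mu_{s+h}-\int u(s)\,d\mu_s=\int_s^{s+h}\Big(\int Ku(s+h)\,d\mu_r-\int Ku(r)\,d\mu_s\Big)dr.
\]
Dividing by $h$ and letting $h\to0^+$, using the $\pi$-continuity of $r\mapsto Ku(r)$ and of $r\mapsto\mu_r$ together with the uniform bounds, the right-hand side tends to $0$; hence $s\mapsto\int u(s)\,d\mu_s$ is constant on $[0,T]$, so $\int\psi\,d\mu_T=\int u(0)\,d\mu_0=\int P_T\psi\,d\mu=\int\psi\,d(P_T^*\mu)$. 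Since $\psi\in\mathcal E_A(H)$ is arbitrary and the exponential functions separate the finite measures in $\mathcal M_V(L^6(0,1))$ (uniqueness of characteristic functionals on the separable Hilbert space $H$, density of $D(A)$ in $H$, and the fact that $L^6(0,1)\hookrightarrow H$ is an injective Borel isomorphism onto its image), we conclude $\mu_T=P_T^*\mu$; and $T>0$ was arbitrary.

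\emph{Main obstacle.} The hard part is the uniqueness half, and within it the ``duality'' identity showing $s\mapsto\int u(s)\,d\mu_s$ is constant. It rests on three points that need care: (i) the extension of \eqref{e.B.2.5a} to the full domain $D(K,\mathcal C_{b,V}(L^6(0,1)))$, which is exactly where the core property of Theorem \ref{T.B.2} is indispensable; (ii) a control, uniform in $s\in[0,T]$, of the $V$-weighted norms of $P_{T-s}(K_0\psi)$, so that each limit and each application of Fubini's theorem is licensed by \eqref{e.B.5b}; and (iii) the only mild a priori time-regularity of the unknown family $\{\mu_t\}$, which must be upgraded to the continuity used in the passage $h\to0^+$.
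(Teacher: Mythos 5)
Your existence half and the first step of your uniqueness half coincide with the paper's argument: the paper obtains existence by combining Theorem \ref{T.B.1} (which already gives that $P_t^*\mu$ solves the Fokker--Planck equation \eqref{e.B.8} for the full generator $K$ and satisfies \eqref{e.B.5b}) with Theorem \ref{T.B.2} ($K\varphi=K_0\varphi$ on $\mathcal E_A(H)$), and it begins uniqueness exactly as you do, by using the $\pi$-core property of $\mathcal E_A(H)$ and dominated convergence (licensed by \eqref{e.B.5b} and $\mu_t\in\mathcal M_V(L^6(0,1))$) to upgrade \eqref{e.B.2.5a} from $\mathcal E_A(H)$ to all $\varphi\in D(K,\mathcal C_{b,V}(L^6(0,1)))$. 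At that point, however, the paper simply invokes the uniqueness statement of Theorem \ref{T.B.1} (itself resting on Theorem \ref{T.1}) and is done, whereas you attempt to re-prove that uniqueness from scratch via the duality function $u(s)=P_{T-s}\psi$ and the claim that $s\mapsto\int u(s)\,d\mu_s$ is constant.

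That last argument, as written, has a genuine gap, and it is precisely the point you flag as obstacle (iii) without resolving it. In the limit
\[
\frac1h\int_s^{s+h}\Big(\int Ku(s+h)\,d\mu_r-\int Ku(r)\,d\mu_s\Big)dr\longrightarrow 0
\]
you need two things that are not available a priori for a competitor solution $\{\mu_t\}$: (a) continuity (or at least a Lebesgue-point property, for the relevant $s$) of $r\mapsto\int g\,d\mu_r$ for the test function $g=Ku(s)=P_{T-s}K\psi$, which lies only in $\mathcal C_{b,V}(L^6(0,1))$ and not, in general, in $D(K,\mathcal C_{b,V}(L^6(0,1)))$ (note $K\psi\notin D(K)$ in general, so Proposition \ref{p.B.2.9}(i) does not apply to it); the hypotheses only give absolute continuity of $t\mapsto\int\varphi\,d\mu_t$ for $\varphi\in D(K)$, while \eqref{e.B.5b} gives mere local \emph{integrability}, not local boundedness, of $t\mapsto\int(1+V)\,d|\mu_t|_{TV}$ — indeed even the continuity of $F(s)=\int u(s)\,d\mu_s$ itself is not guaranteed pointwise; and (b) a way to handle $Ku(s+h)\to Ku(s)$ when the measure varies with $h$: since $P_t$ is \emph{not} strongly continuous in the $\|\cdot\|_{0,V}$-norm (this is the very reason the paper works with $\pi$-convergence), you only have pointwise convergence with a $(1+V)$-bound, which does not combine with the $h$-dependent averaged measures $\frac1h\int_s^{s+h}\mu_r\,dr$ without additional uniform control. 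So the "$\pi$-continuity of $r\mapsto\mu_r$" you invoke is exactly what must be proved, not assumed. The straightforward repair, consistent with the paper, is to stop after your extension step: having shown that $\{\mu_t\}$ satisfies \eqref{e.B.5b} and \eqref{e.B.8} for every $\varphi\in D(K,\mathcal C_{b,V}(L^6(0,1)))$, apply the uniqueness assertion of Theorem \ref{T.B.1} to conclude $\mu_t=P_t^*\mu$ for all $t\geq0$; if instead you insist on a self-contained duality proof, you must first establish the missing time-regularity (or argue only at Lebesgue points and then recover all $t$ from \eqref{e.B.2.5a}), which is a substantial additional piece of work.
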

The meaning of $P_t^*\mu$ will be made clear by Theorem \ref{T.B.1} below.
\bigskip

In the papers \cite{RS04}, \cite{RS06}, the stochastic partial differential equation
\begin{equation} \label{e.I2.1}
  \begin{cases}
      dX_t=\left(\Delta X(t)+ F(X(t))  \right)dt + \sqrt{Q}\, dW(t)\\
      X(0)=x\in H,
  \end{cases}
\end{equation}
 has been considered, where
 $H:=L^2(0,1)$, $W(t),\, t\geq0$ is a cylindrical Wiener process on $H $, 
$Q:H\to H$ is a nonnegative definite symmetric operator of trace class, $\Delta$ is the Dirichlet Laplacian on $(0,1)$, $F:H^1_0(0,1)\to H$ is a measurable vector field of the form 
\[
   F(x)(r)= \frac{d}{dr}\left(\Psi\circ x  \right)(r)+\Phi(r,x(r)),\quad x\in H^1_0(0,1),\, r\in (0,1).
\]
The associated Kolmogorov operator is
\[
  \mathcal L\varphi(x)=\frac12\textrm{Tr}\left(QD^2\varphi(x)  \right)+\left\langle\Delta x+F(x),D\varphi(x)\right\rangle,  
\] 
where $\varphi:H\to \Rset$ is a suitable cylindrical smooth function.
Roughly speaking, the authors show that $\mathcal L$ can  be extended to the generator of a strongly continuous semigroup in  a space of weakly continuous functions weighted by a proper Lyapunov-type function.
Then, they construct a Markov process which solves equation \eqref{e.I2.1} in the sense of the martingale problem.
 \bigskip

We stress that in \cite{RS04}, \cite{RS06} the noise is driven by a trace class operator, whereas in our case the perturbation is of white noise type.  
In this direction, the results of Theorem \ref{T.B.1} seem to be new.

We mention also  the papers \cite{GK01}, \cite{Manca07}, \cite{Manca07a}, where Kolmogorov operators of Ornstein-Uhlenbeck and reaction-diffusion type are considered in spaces of uniformly continuous functions.

In  \cite{DPD07} (see, also, \cite{DPZ92}, \cite{DPZ96}, \cite{DPZ02}, \cite{DP04}, \cite{DPT01}, \cite{Manca06}) the operator $K_0$ has been considered in the space $L^2(H;\nu)$, where $\nu$ is the invariant measure (its existence is proved in \cite{DPDT}) of the semigroup $P_t$, $t\geq0$.
In addition, several estimates are proved in order to ensure that the operator $K_0$ is $m$-dissipative in $L^2(H;\nu)$.
Therefore, the authors show that the operator $K_0$ can be uniquely extended to the infinitesimal generator of the semigroup $P_t$ in $L^2(H;\nu)$.

\bigskip

The approach we use has been developed in \cite{Manca07}, where a Fokker-Planck equation has been considered for Ornstein-Uhlenbeck operator perturbed by a Lipschitz and bounded term.
In the paper \cite{Manca07a}, this approach has been extended also to reaction-diffusion operators, and the Fokker-Planck equation has been solved for measure with finite moments up to a suitable degree.

We mention also \cite{LaTh}, where Markov transition semigroups on spaces of measures have been considered, and the theory of such semigroups is developed.

Existence of measure valued solutions for equations involving second order partial differential operators in infinite dimensional spaces has been also considered in \cite{BR01}.
However, in this paper we concentrate on uniqueness of the solution, whereas in \cite{BR01} it has been shown existence results.

The paper is organized as follows:
in the next two sections we shall introduce notation and we derive some preliminary results that will be used throghout the paper. In section 4, we shall study the transition semigroup $P_t$ in in $\mathcal C_{b,V}(L^6(0,1))$.
In section 5, we shall introduce the Ornstein-Uhlenbeck semigroup and the corresponding operator in the spaces $\mathcal C_{b,1}(H)$ and $\mathcal C_{b,V}(L^6(0,1))$.
Section 6 is devoted to study the Galerkin approximations of  problems \eqref{e.B.1}, \eqref{e.B.3} and to derive some fundamental estimate on the associated transition semigroup.
In section 7 we show the main result of the paper, that is that the Kolmogorov operator $K_0$ is the characterization on a core of the operator \eqref{e.B.0}. 
Finally, section 8 is devote to the proof of Theorem \ref{T.B.1}.
\section{Notations and preliminary results}

%
If $E$, $E'$ are, respectively, a topological space and a  Banach space with norm $|\cdot|_{E'}$, we denote by $\mathcal C_b(E,E')$ the Banach space of the 
bounded continuous function $\varphi:E\to E'$ endowed with the supremum norm 
\[
   \|\varphi\|_{\mathcal C_b(E,E')}:=\sup_{x\in E}|\varphi(x)|_{E'}.
\]
When $E'=\Rset$ we write $\mathcal C_b(E)=\mathcal C_b(E,\Rset)$.
If $E=H$, we simply denote by $\|\cdot\|_0$ the supremum norm of $\mathcal C_b(H)$.
We also denote by $\mathcal C_{b,1}(H)$ the Banach space of all continuous functions $f:H\to \Rset$ such that 
\[
  \|f\|_{0,1}:=\|(1+|\cdot|_2)^{-1}f\|_0<\infty.
\]
The set $\mathcal C_b^1(H)$ is the space of all $\varphi\in \mathcal C_b(H)$ which are Fr\'echet differentiable with continuous and bounded differential $D\varphi \in \mathcal C_b(H,H)$.

We now introduce the definition of $\pi$-convergence. 
Here, $E$ denote a topological space.
A sequence $\{\varphi_n\}_{n\in \Nset} \subset\mathcal C_b(E)$ is said to be {\em $\pi$-convergent} to a function $\varphi$ $\in$ $\mathcal C_b(E)$  if 
\[
 \lim_{n\to \infty}\varphi_n(x)=\varphi(x),\quad \forall x\in E
\] 
and 
\[
  \sup_{n\in\Nset}\|\varphi_n\|_{\mathcal C_b(E)} <\infty. 
\]
Similarly,  the $m$-indexed sequence $\{\varphi_{n_1,\ldots,n_m}\}_{n_1\in\Nset,\ldots,n_m\in\Nset}\subset \mathcal C_b(E)$ is said to be $\pi$-convergent to  $\varphi$ $\in$ $\mathcal C_b(E)$ if for any $i\in \{1,\ldots,m-1\}$  
 there exists an $i$-indexed sequence $\{\varphi_{n_1,\ldots,n_{i}}\}_{n_1\in\Nset,\ldots,n_{i}\in\Nset}\subset \mathcal C_b(E)$  such that 
\[
 \lim_{n_{i+1}\to\infty}\varphi_{n_1,\ldots,n_{i+1}}\stackrel{\pi}{=}
    \varphi_{n_1,\ldots,n_i},\quad  i\in \{1,\ldots,m-1\}
\]
and
\[
  \lim_{n_1\to\infty}\varphi_{n_1}\stackrel{\pi}{=}\varphi.
\]
We shall write
\[
 \lim_{n_1\to \infty}\cdots\lim_{n_m\to \infty} \varphi_{n_1,\ldots,n_m}\stackrel{\pi}{=}\varphi
\] 
or $\varphi_n\stackrel{\pi}{\to}\varphi$ as $n \to \infty$, when the sequence has one index.
%
%
%
%
%
%


We recall that the operator $A$  is the infinitesimal generator of a strongly continuous semigroup in $H$, which we denote by $e^{tA}$, $t\geq0$.
Moreover, 
the semigroup $e^{tA}$ can be extended to $L^p(0,1)$, for any $p\geq1$, and for all $p>1$ the constant $\lambda_p=2p^{-1}(p-1)\pi^2$ verifies
\[
   |e^{tA}x|_p\leq e^{\lambda_pt}|x|_p,\quad x\in L^p(0,1).
\]


\section{Estimates on the solution}
Here we collect some properties of the solution of \ref{e.B.1}.
A fundamental role will be played by the so-called stochastic convolution $W_A(t)$, which is formally given by
\begin{equation} \label{e.B.15}
   W_A(t)=\int_0^t e^{(t-a)A}dW(s)=\sum_{k=1}^\infty\int_0^t e^{(t-a)A}e_kd\beta_k(s).
\end{equation}
For any $t>0$, the process $W_A(t)$ has gaussian law of zero mean and covariance operator
\[
   Q_tx=\int_0^t e^{2(t-a)A}xds,\quad x\in H
\]
As easily seen, the operator $Q_t$ is trace class.
Now set
\[
   Y(t,x)=X(t,x)-W_A(t).
\]
We write \eqref{e.B.3} as
\begin{equation}   \label{e.B.10}
 \left\{\begin{array}{lll}
   \DS  Y(t,x)&=&\DS e^{tA}x + \int_0^t e^{(t-s)A}\frac{\partial}{\partial\xi}\left(Y(s,x)+W_A(s,x)  \right)^2 ds,\\ \\
   \DS  Y(0,x)&=&x,\quad x\in H
 \end{array}\right. 
\end{equation}
As we shall see, if $z(t)\in L^p(0,1)$ a.s., then $e^{tA}\frac{\partial}{\partial\xi}z^2\in L^p(0,1)$ is bounded.
Then the above integral converges and the equation is meaningful. 
We now give the precise meaning of solution.
We say that $X(t,x)$ is a {\em mild} solution of \eqref{e.B.3} if $Y(t,x)=X(t,x)-W_A(t)$ satisfies \eqref{e.B.10} for a.s. all $\omega\in\Omega$.
The following result is proved in \cite{DPDT}.
\begin{Theorem} \label{t.B.7.1}
Let $x\in L^p(0,1)$, $p\geq2$.
Then there exists a unique mild solution of equation \eqref{e.B.3}, which belongs $\PP$-a.s. to $C([0,T];L^p(0,1))$, for any $T>0$.
\end{Theorem}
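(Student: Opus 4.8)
Since the theorem is quoted from \cite{DPDT}, I only outline the argument one would give. The starting point is the splitting $X(t,x)=Y(t,x)+W_A(t)$, which turns \eqref{e.B.3} into the random integral equation \eqref{e.B.10}, to be solved pathwise. The first step is therefore to establish the pathwise regularity of the stochastic convolution: $W_A\in C([0,T];L^p(0,1))$ $\PP$-a.s.\ for every $p\ge2$ (indeed for $p=\infty$). Here the one-dimensionality of the space variable is essential. One uses the factorization formula
\[
  W_A(t)=\frac{\sin\pi\alpha}{\pi}\int_0^t (t-s)^{\alpha-1}e^{(t-s)A}Z_\alpha(s)\,ds,\qquad Z_\alpha(s)=\int_0^s (s-\sigma)^{-\alpha}e^{(s-\sigma)A}\,dW(\sigma),
\]
with $\alpha\in(0,1/4)$: the It\^o isometry together with the eigenbasis $\{e_k\}$ of $A$ reduces the finiteness of $\EE|Z_\alpha(s)|_2^2$ to $\int_0^T\sigma^{-2\alpha}\,\mathrm{Tr}\,(e^{2\sigma A})\,d\sigma<\infty$, which holds since $\mathrm{Tr}\,(e^{2\sigma A})\sim\sigma^{-1/2}$; Gaussian hypercontractivity upgrades this to $\EE\big(\int_0^T|Z_\alpha(s)|_p^m\,ds\big)<\infty$ for every $m$, and then convolution against the integrable kernel $(t-s)^{\alpha-1}$, using $|e^{tA}\cdot|_p\le e^{\lambda_p t}|\cdot|_p$, maps $Z_\alpha$ into $C([0,T];L^p(0,1))$ by a Young/H\"older estimate.

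The second step is local well-posedness of \eqref{e.B.10}. Fix $\omega$ with $W_A(\cdot,\omega)\in C([0,T];L^p(0,1))$ and regard \eqref{e.B.10} as a fixed-point problem $Y=\Gamma(Y)$. The crucial ingredient is the smoothing estimate for the $1$-D heat semigroup,
\[
  \Big|e^{tA}\frac{\partial}{\partial\xi}g\Big|_p\le C\,t^{-\frac12-\frac1{2p}}\,|g|_{p/2},\qquad t>0,
\]
so that, since $|(Y+W_A)^2|_{p/2}=|Y+W_A|_p^2$,
\[
  |\Gamma(Y)(t)|_p\le e^{\lambda_p t}|x|_p+C\int_0^t (t-s)^{-\frac12-\frac1{2p}}\,|Y(s)+W_A(s)|_p^2\,ds,
\]
and likewise $|\Gamma(Y_1)(t)-\Gamma(Y_2)(t)|_p\le C\int_0^t (t-s)^{-\frac12-\frac1{2p}}\big(|Y_1(s)|_p+|Y_2(s)|_p+2|W_A(s)|_p\big)\,|Y_1(s)-Y_2(s)|_p\,ds$. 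Because $\frac12+\frac1{2p}<1$ for every $p\ge2$, the kernel is integrable and $\Gamma$ is a contraction on a small ball of $C([0,T_0];L^p(0,1))$ with $T_0$ depending only on $|x|_p$ and $\sup_{[0,T]}|W_A(\cdot,\omega)|_p$; this yields a unique local mild solution, and the same Gronwall-type estimate gives uniqueness on the whole interval of existence.

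The third, genuinely delicate, step is to rule out blow-up, i.e.\ to obtain an a priori bound $\sup_{t<\tau_{\max}(\omega)}|Y(t,x)|_p<\infty$ forcing $\tau_{\max}=+\infty$. This is where the algebraic structure of the Burgers nonlinearity is used: testing the equation for $Y$ against $|Y|^{p-2}Y$ and integrating by parts, the contribution $\int_0^1|Y|^{p-2}Y\,\partial_\xi(Y^2)\,d\xi$ of the ``pure'' quadratic term vanishes thanks to the Dirichlet boundary condition (it is an exact $\xi$-derivative), while the mixed terms from $2YW_A+W_A^2$ are absorbed into the parabolic dissipation $\int_0^1|Y|^{p-2}|\partial_\xi Y|^2\,d\xi$ plus lower-order quantities controlled by $\sup_{[0,T]}|W_A|_p$ and $|x|_p$; Gronwall then gives the required bound. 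Continuity of $t\mapsto Y(t,x)$ at $t=0$ with value $x$ follows from the integral representation and the strong continuity of $e^{tA}$ on $L^p(0,1)$, so $X=Y+W_A\in C([0,T];L^p(0,1))$ $\PP$-a.s.

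The main obstacle is the interplay of the last two features: the sharp regularity of the stochastic convolution, which holds only because the spatial dimension is one (in dimension $\ge2$ the convolution is not even function-valued), and, above all, the global a priori estimate, where a naive Gronwall argument applied to the quadratically growing drift fails and one must exploit the exact cancellation in the Burgers term together with the parabolic smoothing.
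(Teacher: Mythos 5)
The paper does not prove this theorem itself but quotes it from \cite{DPDT}, and your outline follows essentially the same route as that reference: pathwise regularity of the stochastic convolution (via factorization, using the one-dimensional trace bound), a fixed-point argument for the mild equation \eqref{e.B.10} based on the smoothing estimate for $e^{tA}D_\xi$ (the same estimate the paper reuses in the proof of Theorem \ref{t.B.3.2}), and a global a priori $L^p$ bound exploiting the exact cancellation of the Burgers term, which is precisely what underlies Lemma \ref{l.B.3.1}. The only minor imprecision is that the absorption of the mixed terms is carried out there with $\theta=\sup_{t\in[0,T]}|W_A(t)|_\infty$ rather than $\sup_{t\in[0,T]}|W_A(t)|_p$, but your first step supplies exactly this sup-norm bound, so the sketch is consistent.
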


We prove  uniform continuity with respect to the initial datum in a bounded neighborhood.
In order to proceed, set
\begin{equation}\label{e.B.12}
   \theta_{}=\sup_{t\in [0,T]}|W_A(t)|_\infty,\quad T>0.
\end{equation}
Clearly $\theta_{}$ is a random variable, and $\theta_{}<\infty$ a.s. 
We need the following estimates, proved in Lemma 3.1 of \cite{DPDT}
\begin{Lemma} \label{l.B.3.1}
 For any $p\in[2,\infty)$ there exists $c_p>0$ such that 
 if $Y(t,x)$ is a solution of \eqref{e.B.10}, then
\[
   |Y(t,x)|_p\leq c_p\left(\theta_{}^3+|x|_p   \right)e^{1+2p\theta_{} t}. 
\]
\end{Lemma}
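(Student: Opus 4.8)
The plan is to read \eqref{e.B.10} as the mild form of the parabolic problem $\partial_t Y = AY + \partial_\xi\big((Y+W_A)^2\big)$, $Y(0)=x$, and to run an $L^p$ energy estimate on it, testing the equation against $|Y|^{p-2}Y$. The one structural fact that makes this work is the cancellation of the genuinely quadratic part of the Burgers nonlinearity: integrating by parts and using $Y(t)\in H^1_0(0,1)$ together with $Y^2|Y|^{p-2}=|Y|^p$, one gets $\langle\partial_\xi(Y^2),|Y|^{p-2}Y\rangle=-(p-1)\int_0^1|Y|^p\,\partial_\xi Y\,d\xi$, and the last integral is the boundary value of $\tfrac1{p+1}\,\mathrm{sgn}(Y)|Y|^{p+1}$, hence vanishes. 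Without this identity, testing in $L^p$ would only produce a quadratic Gronwall inequality, which may blow up in finite time; with it, the surviving nonlinear contribution is, after one more integration by parts, linear in the top-order term, and everything can be closed globally. Rigorously the computation should be run on the Galerkin approximations of Section~6 (for which $Y_m(t)$ is smooth and the boundary terms are genuinely zero), the bound obtained being uniform in $m$ so that it passes to the limit.

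Concretely, $\tfrac1p\tfrac{d}{dt}|Y(t)|_p^{\,p}=\langle AY,|Y|^{p-2}Y\rangle+\langle\partial_\xi((Y+W_A)^2),|Y|^{p-2}Y\rangle$. The first term equals $-(p-1)\int_0^1|Y|^{p-2}|\partial_\xi Y|^2\,d\xi\le 0$ and is the dissipation. In the second term we expand $(Y+W_A)^2=Y^2+2YW_A+W_A^2$, discard the $Y^2$ part by the cancellation above, and integrate by parts, obtaining $-(p-1)\int_0^1|Y|^{p-2}\partial_\xi Y\,(2YW_A+W_A^2)\,d\xi$. We then estimate $|2YW_A+W_A^2|\le 2\theta|Y|+\theta^2$ using $\sup_{t\in[0,T]}|W_A(t)|_\infty\le\theta$ from \eqref{e.B.12}, apply Cauchy--Schwarz and Young's inequality to the two resulting integrals, and absorb the factor $\int_0^1|Y|^{p-2}|\partial_\xi Y|^2$ into the dissipative term. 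Using in addition that $|Y|_{p-2}\le|Y|_p$, since $(0,1)$ has length one, we reach a differential inequality of the form
\[
\frac{d}{dt}|Y(t)|_p^{\,p}\;\le\;a_p\,\theta^2\,|Y(t)|_p^{\,p}\;+\;b_p\,\theta^4\,|Y(t)|_p^{\,p-2},
\]
with constants $a_p,b_p>0$ depending only on $p$.

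To finish, put $u(t)=|Y(t)|_p^{\,2}$; the inequality above becomes the \emph{linear} one $u'(t)\le \tfrac{2a_p}{p}\,\theta^2\,u(t)+\tfrac{2b_p}{p}\,\theta^4$, so Gronwall's lemma gives $u(t)\le e^{c_p\theta^2 t}\big(|x|_p^{\,2}+c_p\,\theta^2\big)$, whence $|Y(t,x)|_p\le \tilde c_p\,(|x|_p+\theta)\,e^{c_p\theta^2 t}$. This is an a priori bound of the required type — polynomial in $\theta$ and in $|x|_p$, times an exponential in $t$ with a polynomial-in-$\theta$ rate — and the precise form $c_p(\theta^3+|x|_p)e^{1+2p\theta t}$ of \cite{DPDT} is then recovered by tracking the constants in the Young inequalities and by elementary majorizations (the factor $e^1$ swallows the regime $\theta\le1$, and $|x|_p+\theta\le\theta^3+|x|_p$ once $\theta\ge1$). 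The only genuine obstacle is the one isolated at the outset, namely the quadratic nonlinearity, and it is removed by the identity $\langle\partial_\xi(Y^2),|Y|^{p-2}Y\rangle=0$; what remains — the justification of the energy identity through finite-dimensional truncation and the bookkeeping of constants — is routine.
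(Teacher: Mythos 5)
The paper does not actually prove this lemma: it is quoted verbatim from Lemma 3.1 of \cite{DPDT}, so the only thing to assess is whether your argument establishes the stated inequality, and it does not. Your energy estimate is fine up to the differential inequality $\frac{d}{dt}|Y(t)|_p^p\le a_p\theta^2|Y(t)|_p^p+b_p\theta^4|Y(t)|_p^{p-2}$ (the cancellation $\int_0^1\partial_\xi(Y^2)\,|Y|^{p-2}Y\,d\xi=0$ and the dissipativity of the $A$-term are correct), but Gronwall then yields $|Y(t,x)|_p\le \tilde c_p(|x|_p+\theta)e^{c_p\theta^2 t}$, with rate \emph{quadratic} in $\theta$, whereas the lemma asserts the rate $2p\theta t$, \emph{linear} in $\theta$. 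Your closing claim that the stated form is ``recovered by tracking the constants in the Young inequalities and by elementary majorizations'' is false: $e^{c_p\theta^2 t}$ is not dominated by $C e^{1+2p\theta t}$ uniformly in $\theta$ and $t$, and the $\theta^2$ is not an artifact of sloppy constants but is forced by your scheme --- the cross term $\theta\int_0^1|Y|^{p-1}|\partial_\xi Y|\,d\xi$, once absorbed into the dissipation $\int_0^1|Y|^{p-2}|\partial_\xi Y|^2\,d\xi$ by Cauchy--Schwarz/Young, necessarily leaves a coefficient proportional to $\theta^2$ on $|Y|_p^p$. To get a rate linear in $\theta$ one must exploit the conservative (gradient) structure of the drift $\partial_\xi(Y W_A)$ beyond a crude absorption; for instance, in the linearized problem the quadratic form of $\partial_\xi^2-z\partial_\xi$ can be written as $-\int_0^1\bigl(u'+\tfrac z2 u\bigr)^2d\xi\le0$, which is exactly the kind of cancellation your Young step throws away. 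So as written you have proved a genuinely weaker estimate (which may well suffice for the qualitative uses of the lemma later in the paper, but is not the statement).

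A secondary gap is the claim that the rigorous justification via the Galerkin approximations of Section~6 is ``routine''. For $p>2$ it is not: in the Galerkin system the nonlinearity is $P_m D_\xi\bigl((P_mY^m)^2\bigr)$, and the natural test function $|Y^m|^{p-2}Y^m$ does not lie in the range of $P_m$, so the pairing becomes $\bigl\langle D_\xi\bigl((Y^m)^2\bigr),P_m\bigl(|Y^m|^{p-2}Y^m\bigr)\bigr\rangle$ and the very cancellation your proof hinges on is destroyed by the projection, leaving an error term that is not obviously controllable uniformly in $m$. The standard remedies (regularizing the noise path $W_A$ and the initial datum and working with classical solutions of the PDE, or arguing directly on the mild formulation as in \cite{DPDT}) require an extra argument that you have not supplied.
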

We have the following
\begin{Theorem}  \label{t.B.3.2}
For any $p\in[2,\infty)$ there exists a continuous function $c_p:(\Rset^+)^4\to \Rset^+$ such that 
\[
   |Y(t,x)-Y(t,y)|_p\leq c_p(t,|x|_p,|y|_p,\theta_{})|x-y|_p, \quad  x,\,y\in L^p(0,1)
\]
\end{Theorem}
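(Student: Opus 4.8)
The plan is to reduce the difference estimate for $Y$ to a Gronwall-type argument driven by the integral equation \eqref{e.B.10}. Writing $z(t)=Y(t,x)-Y(t,y)$ and $w(t)=W_A(t)$, subtract the two mild formulations to get
\[
z(t)=e^{tA}(x-y)+\int_0^t e^{(t-s)A}\frac{\partial}{\partial\xi}\Big[\big(Y(s,x)+w(s)\big)^2-\big(Y(s,y)+w(s)\big)^2\Big]\,ds.
\]
The key algebraic observation is that the bracket factors as $z(s)\cdot\big(Y(s,x)+Y(s,y)+2w(s)\big)$, so that $\partial_\xi$ of it is a product of a function we control in $L^p$ (namely $z(s)$, by induction on the bound being built) and a function we control in $L^\infty$-type norms. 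First I would record the smoothing estimate for $e^{tA}\partial_\xi$: there is a constant such that $|e^{tA}\partial_\xi u|_p\le C t^{-1/2-\text{(something)}}|u|_{q}$ for suitable exponents, which is exactly the ingredient the excerpt flags ("if $z(t)\in L^p(0,1)$ a.s., then $e^{tA}\partial_\xi z^2\in L^p(0,1)$ is bounded"). Combining this with H\"older's inequality to split the product, and with Lemma \ref{l.B.3.1} to bound $|Y(s,x)|_p$ and $|Y(s,y)|_p$ by expressions depending only on $|x|_p$, $|y|_p$, $\theta$, gives
\[
|z(t)|_p\le e^{\lambda_p t}|x-y|_p+\int_0^t \frac{C\,\Phi(s,|x|_p,|y|_p,\theta)}{(t-s)^{\alpha}}\,|z(s)|_p\,ds
\]
for some $\alpha<1$ and some continuous $\Phi$.

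Next I would apply the singular (Gronwall–Henry) version of Gronwall's lemma to this inequality, which yields $|z(t)|_p\le c_p(t,|x|_p,|y|_p,\theta)\,|x-y|_p$ with $c_p$ continuous in all four arguments — precisely the claimed form. The continuity and monotonicity of the resulting constant in $t$, $|x|_p$, $|y|_p$, $\theta$ follows from the explicit Mittag-Leffler-type series produced by the iteration, since each term is built from continuous functions of these parameters and the series converges locally uniformly.

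The main obstacle I expect is the handling of the nonlinear term: the product $z(s)\big(Y(s,x)+Y(s,y)+2w(s)\big)$ cannot be estimated purely in $L^p$ because after applying $\partial_\xi$ one loses a derivative, and the factors $Y(s,x),Y(s,y)$ are only known to lie in $L^p$, not in $L^\infty$. The resolution is to route the loss of derivative through the parabolic smoothing of $e^{(t-s)A}$ — i.e. use $|e^{(t-s)A}\partial_\xi v|_p\le C(t-s)^{-1/2-1/(2r)}|v|_{r}$ for a carefully chosen $r<p$, and then split $|z(s)\,(Y(s,x)+Y(s,y)+2w(s))|_{r}\le |z(s)|_p\,|Y(s,x)+Y(s,y)+2w(s)|_{p'}$ with $1/r=1/p+1/p'$ — and to absorb $|w(s)|_{p'}$ into $\theta$ via the inclusion $L^\infty(0,1)\subset L^{p'}(0,1)$ on a bounded interval. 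Keeping the time-singularity exponent strictly below $1$ while doing this splitting is the delicate bookkeeping step; once the exponents are balanced, the singular Gronwall lemma closes the argument routinely.
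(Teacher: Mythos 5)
Your proposal follows essentially the same route as the paper's proof: the same factorization $\big(Y(s,x)+W\big)^2-\big(Y(s,y)+W\big)^2=(Y(s,x)-Y(s,y))(Y(s,x)+Y(s,y)+2W)$, the same smoothing estimate for $e^{(t-s)A}\partial_\xi$ (the paper realizes it via $W^{-1,p/2}(0,1)\to W^{1/p,p/2}(0,1)\hookrightarrow L^p(0,1)$ with singularity $(t-s)^{-\frac12-\frac{1}{2p}}$, i.e.\ your choice $r=p/2$, $p'=p$), the same use of Lemma \ref{l.B.3.1} to bound $|Y(s,x)|_p,|Y(s,y)|_p$ by functions of $|x|_p,|y|_p,\theta$, and the same conclusion by the singular Gronwall lemma (Lemma 7.1.1 in \cite{Henry}). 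The argument is correct as proposed.
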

\begin{proof}
Here we follow \cite{DPDT}.
By \eqref{e.B.10}  we have
\begin{multline*}
   Y(t,x)-Y(t,y)= e^{tA}(x-y)\\
   +\frac12\int_0^t e^{(t-s)A}\frac{\partial}{\partial \xi}\big(( Y(s,x)-Y(s,y))(Y(s,x)+Y(s,y)+2W(s))  \big)ds.
\end{multline*}
then
\begin{multline} \label{e.B.11}
   |Y(t,x)-Y(t,y)|_p\leq |x-y|_p\\
   +\frac12\int_0^t\left| e^{(t-s)A}\frac{\partial}{\partial \xi}\big(( Y(s,x)-Y(s,y))(Y(s,x)+Y(s,y)+2W(s))  \big)\right|_pds.
\end{multline}
As well known, $e^{tA}$, $t\geq0$ has smoothing properties.
In particular, for any $s_1,s_2\in \Rset$, $s_1\leq s_2$, $r\geq1$, $e^{tA}$ maps $W^{s_1,r}(0,1)$ into $W^{s_2,r}(0,1)$, for any $t>0$.
Moreover,  there exists $C_1>0$, depending on $s_1,s_2,r$, such that
\begin{equation} \label{e.B.13}
   |e^{tA}z |_{W^{s_2,r}(0,1)}\leq C_1\left(1+t^{\frac{s_1-s_2}{2}}   \right)|z |_{W^{s_1,r}(0,1)},
   \quad z\in W^{s_1,r}(0,1),
\end{equation}
see Lemma 3, Part I in \cite{Rothe}.
Using the Sobolev embedding theorem we have
\[
  \left|e^{(t-s)A}\frac{\partial}{\partial \xi}\big(( Y(s,x)-Y(s,y))(Y(s,x)+Y(s,y)+2W(s))\big)\right|_p
\]
\[
 \leq   C_1 \left|e^{(t-s)A}\frac{\partial}{\partial \xi}\big( (Y(s,x)-Y(s,y))  
  (Y(s,x)+Y(s,y)+2W(s))\big)\right|_{W^{\frac1p,\frac p2}(0,1)}
\]
and, thanks to the above estimate with $s_1=-1$, $s_2=1/p$, $r=p/2$
\[
  \left|e^{(t-s)A}\frac{\partial}{\partial \xi}\big(( Y(s,x)-Y(s,y))(Y(s,x)+Y(s,y)+2W(s))\big)\right|_p
\]
\begin{multline*}
   \leq C_1C_2\left(1+(t-s)^{-\frac{1}{2}-\frac{1}{2p}}   \right) 
\\
  \times \left| \frac{\partial}{\partial \xi}\big(( Y(s,x)-Y(s,y))(Y(s,x)+Y(s,y)+2W(s))\big)\right|_{W^{-1,\frac p2}(0,1)}
\end{multline*}
\[
  \leq C_1C_2\left(1+(t-s)^{-\frac{1}{2}-\frac{1}{2p}}   \right) 
  \left|( Y(s,x)-Y(s,y))(Y(s,x)+Y(s,y)+2W(s))\right|_{\frac{p}{2}}
\]
\[
   \leq C_1C_2\left(1+(t-s)^{-\frac{1}{2}-\frac{1}{2p}}   \right) 
  \left|  Y(s,x)-Y(s,y)\right|_{p}  \left|Y(s,x)+Y(s,y)+2W(s)\right|_{p}
\]  
\begin{multline*}
\leq c_pC_1C_2\left(1+(t-s)^{-\frac{1}{2}-\frac{1}{2p}}   \right)
   \left|  Y(s,x)-Y(s,y)\right|_{p}\\
   \times  \left(\left(2\theta_{}^3+|x|_p+|y|_p \right)e^{1+2p\theta_{} s}+2 \right)
\end{multline*}
Now the result follows by \eqref{e.B.11} and by Gronwall lemma (see, for instance, Lemma 7.1.1 in \cite{Henry}).
\end{proof}
By recalling that $Y(t,x)=X(t,x)-W_A(t)$ it follows immediately the following result, which will be fundamental in the next section
\begin{Corollary} \label{c.B.8}
For any $p\in [2,\infty)$, $x\in L^p(0,1)$, $T>0$ we have, $\PP$-a.s.,
\[
    \lim_{\varepsilon \to 0^+} \sup_{|h|_p<\varepsilon}\left( \sup_{t\in [0,T]}|X(t,x+h)-X(t,x)|_p  \right)=0
\]
%
\end{Corollary}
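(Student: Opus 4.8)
The goal is to upgrade the Lipschitz-type bound of Theorem \ref{t.B.3.2} into the stated continuity statement for the full solution $X(t,x)=Y(t,x)+W_A(t)$. The plan is to observe first that $W_A(t)$ does not depend on the initial datum, so $X(t,x+h)-X(t,x)=Y(t,x+h)-Y(t,x)$, and hence it suffices to prove the corresponding statement for $Y$. For $Y$, I would fix $\omega$ outside the null set on which $\theta<\infty$ fails, fix $T>0$ and $p\in[2,\infty)$, and apply Theorem \ref{t.B.3.2} with $y=x+h$.

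The key estimate is
\[
   \sup_{t\in[0,T]}|Y(t,x+h)-Y(t,x)|_p\le \Big(\sup_{t\in[0,T]}c_p(t,|x|_p,|x+h|_p,\theta)\Big)\,|h|_p .
\]
Here I would use that $c_p:(\Rset^+)^4\to\Rset^+$ is continuous, hence bounded on compact sets, and that for $|h|_p<\varepsilon\le1$ one has $|x+h|_p\le |x|_p+1$; therefore the supremum over $t\in[0,T]$ and over $|h|_p<\varepsilon$ of $c_p(t,|x|_p,|x+h|_p,\theta)$ is bounded by a finite constant $M=M(p,T,|x|_p,\theta)$ independent of $h$ (and of $\omega$, once $\theta(\omega)$ is fixed). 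Consequently
\[
   \sup_{|h|_p<\varepsilon}\Big(\sup_{t\in[0,T]}|X(t,x+h)-X(t,x)|_p\Big)\le M\,\varepsilon\xrightarrow[\varepsilon\to0^+]{}0 ,
\]
which is exactly the claim, holding for $\PP$-a.e. $\omega$ since $\theta<\infty$ a.s.

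There is essentially no serious obstacle here: the content is entirely in Theorem \ref{t.B.3.2}, and the remaining work is the elementary observation that a continuous function on $(\Rset^+)^4$ is bounded on the compact box $[0,T]\times[|x|_p,|x|_p]\times[|x|_p-1,|x|_p+1]\cap\Rset^+\times\{\theta(\omega)\}$ — wait, more precisely $[0,T]\times\{|x|_p\}\times[0,|x|_p+1]\times\{\theta(\omega)\}$. The only point deserving a word of care is that the bound $M$ depends on $\omega$ through $\theta(\omega)$, which is why the conclusion is $\PP$-a.s. rather than deterministic; but since $\theta<\infty$ a.s. this is harmless. I would therefore write the proof in three or four lines, invoking Theorem \ref{t.B.3.2} and the continuity of $c_p$.
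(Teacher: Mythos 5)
Your proof is correct and follows exactly the route the paper intends: the paper derives Corollary \ref{c.B.8} immediately from Theorem \ref{t.B.3.2} via the identity $X(t,x+h)-X(t,x)=Y(t,x+h)-Y(t,x)$ (since $W_A$ is independent of the initial datum), together with the continuity of $c_p$ and the a.s.\ finiteness of $\theta$, just as you do.
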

\section{The transition semigroup in $\mathcal C_{b,V}(L^6(0,1))$}  

This section is devoted in studying the semigroup $  P_t$, ${t\geq0}$ in the  space\linebreak  $\mathcal C_{b,V}(L^6(0,1))$. 
\begin{Remark}\label{r.B.9}
{\em
By Corollary \ref{c.B.8} it follows that for any $\varphi\in \mathcal C_{b,V}(L^6(0,1))$
\[
 \lim_{\varepsilon\to 0}  \sup_{ |h|_6<\varepsilon,\,t\in[0,T]}|P_t\varphi(x+h)- P_t\varphi(x)|=0.
\]
This, together with the estimate of Theorem \ref{t.B.3.2},
allows us to show that $P_t$ maps the space $\mathcal C_{b,V}(L^6(0,1))$ into itself.  
}
\end{Remark}
\begin{Proposition} \label{p.B.2.1}
 Formula \eqref{e.B.38a}  defines a semigroup of operators $({P}_t)_{t\geq0} $ in $\mathcal C_{b,V}(L^6(0,1))$ 
and there exist  two constants $c_0\geq1$, $\omega_0\in \Rset$ and a family of probability measures 
 $\{\pi_t(x,\cdot),\,t\geq0,\,x\in L^6(0,1)\}\subset  \mathcal M_{V}(L^6(0,1))$ 
such that
\begin{itemize}
\item[(i)] $ {P}_t\in \mathcal L( \mathcal C_{b,V}(L^6(0,1)))$ and 
 $  \|  {P}_t\|_{\mathcal L(\mathcal C_{b,V}(L^6(0,1))} \leq  c_0e^{ \omega_0 \,t}$;  
\item[(ii)] $\DS  {P}_t\varphi(x)=\int_H\varphi(y)\pi_t(x,dy) $, for any $t\geq0$, 
$\varphi \in \mathcal C_{b,V}(L^6(0,1))$, $x\in L^6(0,1)$;  
\item[(iii)] for any  $\varphi \in \mathcal C_{b,V}(L^6(0,1))$, $x\in L^6(0,1)$, the function $\Rset^+\to\Rset$, $t\mapsto  {P}_t\varphi(x)$ is continuous.  
\item[(iv)] $  {P}_t  {P}_s=  {P}_{t+s}$, for any $t,s\geq0$ and $  {P}_0=I$;
\item[(v)] for any $\varphi\in \mathcal C_{b,V}(L^6(0,1))$ and any sequence
 $(\varphi_n)_{n\in \Nset}\subset \mathcal C_{b,V}(L^6(0,1))$ such that 
\[
  \lim_{n\to\infty}  \frac{\varphi_n}{1+V} \stackrel{\pi}{=}  \frac{\varphi}{1+V} 
\]
we have, for any $t\geq0$,  
\[
 \lim_{n\to\infty}   \frac{{P}_t\varphi_n}{1+V}  \stackrel{\pi}{=} \frac{{P}_t\varphi}{1+V}.
\]
\end{itemize}
\end{Proposition}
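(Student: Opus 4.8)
The plan is to take $\pi_t(x,\cdot)$ to be the law of $X(t,x)$ under $\PP$ and to verify the five properties in the order (ii), (iv), (iii), (i), (v); the analytic heart is the a priori bound underlying (i). By Theorem~\ref{t.B.7.1} (with $p=6$) the solution has $\PP$-a.s.\ paths in $C([0,T];L^6(0,1))$, so $X(t,x)$ is an $L^6(0,1)$-valued random variable and $\pi_t(x,\cdot)$ is a Borel probability measure on $L^6(0,1)$; since $|y|_4\le|y|_6$ on $(0,1)$ one has $V(y)\le|y|_6^{10}$, so Proposition~\ref{p.B.7} (with $p=6$, $k=10$) gives $\int V\,d\pi_t(x,\cdot)=\EE[V(X(t,x))]\le\EE[|X(t,x)|_6^{10}]<\infty$, i.e.\ $\pi_t(x,\cdot)\in\mathcal M_V(L^6(0,1))$. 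Property (ii) is then the change-of-variables formula applied to \eqref{e.B.38a}, and that $P_t$ sends $\mathcal C_{b,V}(L^6(0,1))$ into itself is Remark~\ref{r.B.9} (the operator-norm bound being part of (i) below).

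For (iv), $P_0=I$ is immediate from $X(0,x)=x$, and the semigroup law follows from the cocycle property of the mild solution: by uniqueness (Theorem~\ref{t.B.7.1}) one has $\PP$-a.s.\ $X(t+s,x)=\widetilde X(t,X(s,x))$, where $\widetilde X$ solves \eqref{e.B.3} with $W$ replaced by the increment process $W(s+\cdot)-W(s)$, which is independent of $\mathcal F_s$ and has the law of $W$; since $P_t\varphi$ is continuous on $L^6(0,1)$ (Remark~\ref{r.B.9}), hence Borel, with $|P_t\varphi(y)|\le\|P_t\varphi\|_{0,V}(1+V(y))$, and $\EE[V(X(s,x))]<\infty$, the freezing lemma and the tower property yield
\[
 P_{t+s}\varphi(x)=\EE\big[\,\EE[\varphi(X(t+s,x))\mid\mathcal F_s]\,\big]=\EE\big[(P_t\varphi)(X(s,x))\big]=P_s(P_t\varphi)(x).
\]
For (iii), $t\mapsto X(t,x)$ is $\PP$-a.s.\ continuous in $L^6(0,1)$ (Theorem~\ref{t.B.7.1}), hence $t\mapsto\varphi(X(t,x))$ is $\PP$-a.s.\ continuous, while $|\varphi(X(t,x))|\le\|\varphi\|_{0,V}\big(1+\sup_{s\in[0,T]}V(X(s,x))\big)$ and $\EE\big[\sup_{s\in[0,T]}V(X(s,x))\big]\le\EE\big[\sup_{s\in[0,T]}|X(s,x)|_6^{10}\big]<\infty$ by Proposition~\ref{p.B.7}; dominated convergence gives continuity of $t\mapsto P_t\varphi(x)$ on every $[0,T]$.

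Property (i) is the quantitative core. For $\|\varphi\|_{0,V}\le1$ one has $|P_t\varphi(x)|\le1+\EE[V(X(t,x))]$, so it suffices to prove the Lyapunov estimate
\[
 1+\EE[V(X(t,x))]\le c_0\,e^{\omega_0 t}(1+V(x)),\qquad t\ge0,\ x\in L^6(0,1),
\]
for suitable $c_0\ge1$, $\omega_0\in\Rset$. I would establish it by applying It\^o's formula to $V$ along the finite-dimensional Galerkin approximations $X_n(t,x)$ of \eqref{e.B.3}, obtaining $\tfrac{d}{dt}\EE[V(X_n(t,x))]=\EE[K_0^n V(X_n(t,x))]$ with $K_0^n$ the Kolmogorov operator of the $n$-th Galerkin system, and then proving a drift inequality $K_0^n V\le\omega_0 V+c$ uniformly in $n$: the dissipativity of $A$ controls $\langle x,AD V(x)\rangle$, the conservative (divergence) form of the Burgers nonlinearity makes $\langle D_\xi D V(x),x^2\rangle$ vanish (or be absorbed) after integration by parts thanks to the Dirichlet boundary conditions, and $\mathrm{Tr}[D^2V(x)]$ is of lower order; Gronwall then gives the bound for $X_n$, and $n\to\infty$ by Fatou. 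This estimate — which is of the type of Proposition~\ref{p.B.7} and belongs to the circle of estimates of Section~6, the particular exponents $8$ and $2$ in $V$ being chosen precisely to make the drift inequality close — is what I regard as the main obstacle; granting it, $\|P_t\varphi\|_{0,V}\le c_0e^{\omega_0 t}\|\varphi\|_{0,V}$, which is (i).

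Finally, (v) follows from (i) and dominated convergence. Given $\varphi_n/(1+V)\stackrel{\pi}{\to}\varphi/(1+V)$, put $C:=\sup_n\|\varphi_n\|_{0,V}<\infty$; then for each $x$, $\varphi_n(X(t,x))\to\varphi(X(t,x))$ $\PP$-a.s.\ with $|\varphi_n(X(t,x))|\le C(1+V(X(t,x)))$ and $\EE[1+V(X(t,x))]<\infty$, hence $P_t\varphi_n(x)\to P_t\varphi(x)$; moreover $\|P_t\varphi_n\|_{0,V}\le c_0e^{\omega_0 t}C$ by (i). Dividing by $1+V$, this is exactly $P_t\varphi_n/(1+V)\stackrel{\pi}{\to}P_t\varphi/(1+V)$, which is (v).
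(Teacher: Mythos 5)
Your items (ii)--(v) are fine and essentially follow the paper's route: $\pi_t(x,\cdot)$ is the law of $X(t,x)$, (iii) and (v) come from path continuity, Proposition \ref{p.B.7} and dominated convergence, and (iv) from the Markov property (the paper passes through bounded approximations $\varphi_n$ with $(1+V)^{-1}\varphi_n\stackrel{\pi}{\to}(1+V)^{-1}\varphi$, but your direct tower-property argument is equivalent); delegating the continuity of $x\mapsto P_t\varphi(x)$ to Remark \ref{r.B.9}/Corollary \ref{c.B.8} is also what the paper does.

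The genuine gap is in (i), which you yourself call the quantitative core and then only sketch (``granting it''). The paper does not rederive a Lyapunov estimate at all: it simply invokes Proposition \ref{p.B.7} (the moment bounds of Da Prato--Debussche, proved via the decomposition $X=Y+W_A$ and pathwise estimates of the type of Lemma \ref{l.B.3.1}) to get $\EE[V(X(t,x))]\le c\,(1+V(x))$, and the rest of its proof of (i) is the continuity argument. Your proposed substitute --- It\^o's formula for $V$ along the Galerkin systems with a drift inequality $K_0^nV\le\omega_0V+c$ uniform in $n$ --- does not close in this setting, because the noise is cylindrical: the It\^o correction is $\tfrac12\sum_{k\le n}\langle D^2V(x)e_k,e_k\rangle$, which is \emph{not} of lower order and is not bounded uniformly in $n$ (already for the weight $|x|_2^2$ it equals $n$, and for $V(x)=|x|_6^8|x|_4^2$ it behaves like $n$ times positive polynomial quantities in $x$, since $\sum_{k\le n}e_k^2(\xi)\sim n$). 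The cancellation you invoke for the Burgers term is correct for powers of $L^p$-norms, but it is the trace term, not the drift, that is the obstruction with space-time white noise; this is precisely why the paper imports Proposition \ref{p.B.7} rather than arguing through a Lyapunov inequality. So as written the main analytic step of (i) is missing, and the sketch offered in its place would fail; the fix is simply to use Proposition \ref{p.B.7} (e.g.\ via H\"older on the product structure of $V$, or the pathwise bound of Lemma \ref{l.B.3.1} together with moments of $\theta=\sup_{[0,T]}|W_A(t)|_\infty$) to obtain $\EE[V(X(t,x))]\le c_T(1+V(x))$, and then the semigroup law to convert $c_T$ into a bound of the form $c_0e^{\omega_0 t}$.
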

\begin{proof} 
 (i). Take   $\varphi\in   \mathcal C_{b,V}(L^6(0,1))$, $t>0 $. 
We have to show that $ {P}_t\varphi\in \mathcal C_{b,V}(L^6(0,1)) $.
By Proposition \ref{p.B.7} it follows that
\[
   \left|  {P}_t\varphi(x) \right|\leq \|\varphi\|_{0,V}(1+\EE[V(X(t,x))])\leq c\|\varphi\|_{0,V}(1+V(x)),
\]
for some $c>0$.
Then, we have to show that the function $L^6(0,1)\to\Rset$. $x\mapsto P_t\varphi(x) $ is continuous.
Fix $x_0\in L^6(0,1)$. %
We have
\[
   |P_t\varphi(x_0+h)-P_t\varphi(x_0)|\leq \EE[\varphi(X(t,x_0+h))-\varphi(X(t,x_0))|].
\]
By Corollary \ref{c.B.8} we have that $|X(t,x_0+h)-X(t,x_0)|_6\to 0$ 
$\PP$-a.s. as $|h|_6\to 0$.
Then, by the continuity of $\varphi$ it follows $|\varphi(X(t,x_0+h))-\varphi(X(t,x_0))|\to 0$ 
$\PP$-a.s. as $|h|_6\to 0$.
On the other hand,  $\varphi(X(t,x_0+h))$ has bounded expectation, uniformly in any $L^6(0,1)$-ball of center $x_0$.
Then, it follows that $P_t\varphi(x_0+h)\to P_t\varphi(x_0)$ has $|h|_6\to 0 $.
(i) is proved. \\
(ii). 
Take $\varphi\in \mathcal C_{b,V}(L^6(0,1))$, and consider a sequence $(\varphi_n)_{n\in\Nset}\subset \mathcal C_b(L^6(0,1))$ such that 
\begin{equation*} 
    \lim_{n\to\infty}\frac{\varphi_n}{1+V}\stackrel{\pi}{=}\frac{\varphi}{1+V}.
\end{equation*}
 Since $\pi_t(t,\cdot)$ is the image measure of $X(t,x)$ in $H$, the representation (ii) holds for any $\varphi_n$, that is
\[
   {P}_t\varphi_n(x)=\EE\bigl[\varphi_n(X(t,x))  \bigr]=\int_H\varphi_n(y)\pi_t(x,dy),\quad x\in H.
\]
By Proposition \ref{p.B.7} we can apply the dominated convergence theorem to find the result. 
 \\
(iii). For any $\varphi\in  \mathcal C_{b,V}(L^6(0,1))$, $x\in L^6(0,1)$, $t,s\geq0$ we have 
\begin{eqnarray*}
    {P}_t\varphi(x)- {P}_s\varphi(x)=\EE\left[\varphi(X(t,x))-\varphi(X(s,x))\right]
\end{eqnarray*}
Since for any $T>0$ we have $X(\cdot,x)\in C([0,T],L^6(0,1))$ $\PP$-a.s. (cfr. Theorem \ref{t.B.7.1}), (iii) follows.
\\
(iv).
Take $\varphi\in \mathcal C_{b,V}(L^6(0,1))$ and consider a sequence $(\varphi_n)_{n\in\Nset}\subset \mathcal C_b(L^6(0,1))$ such that $(1+V)^{-1} \varphi_n\stackrel{\pi}{\to}(1+V)^{-1} \varphi$ as $n\to\infty$.
By the markovianity of the process $X(t,x)$ it follows that (iv) holds true for any $\varphi_n$.
Then, since by (iii)  $(1+V)^{-1} {P}_t\varphi_n\stackrel{\pi}{\to}(1+V)^{-1} {P}_t\varphi$ as $n\to\infty$, still by (iii)  
we find
\[
 \frac{ {P}_{t+s}\varphi}{1+V }\stackrel{\pi}{=}  \lim_{n\to\infty}\frac{ {P}_{t+s}\varphi_n}{1+V }
  = \lim_{n\to\infty}\frac{ {P}_t {P}_s\varphi_n}{1+V }\stackrel{\pi}{=}\frac{ {P}_t {P}_s\varphi}{1+V}.
\]
(v). Since (ii) holds and $\pi_t(x,dy)\in\mathcal M_V(L^6(0,1))$, the result follows by the dominated convergence Theorem.
This concludes the proof.
\end{proof}
\begin{Proposition} \label{p.B.2.9}
Let   $X(t,x)$  be the mild solution of problem \eqref{e.B.3} and let $P_t$, ${t\geq0}  $   
be the associated transition semigroups in the   space  
$\mathcal C_{b,V}(L^6(0,1))$ defined by \eqref{e.B.38a}. 
Let also $(K,D(K,\mathcal C_{b,V}(L^6(0,1)))) $   
be the associated infinitesimal generators, defined by  \eqref{e.B.0}.  
Then
\begin{itemize}
\item[(i)] for any $\varphi \in D(K,\mathcal C_{b,V}(L^6(0,1)))$, we have $  {P}_t\varphi \in D(K,\mathcal C_{b,V}(L^6(0,1)))$ and 
     $  {K} {P}_t\varphi =  {P}_t  {K}\varphi$, $t\geq0$;
\item[(ii)] for any $\varphi \in D(K,\mathcal C_{b,V}(L^6(0,1)))$, $x\in H$, the map $[0,\infty)\to \Rset$, $t\mapsto  {P}_t\varphi(x)$ is continuously differentiable and $(d/dt) {P}_t\varphi(x) =  {P}_t  {K}\varphi(x)$;
\item[(iii)] for any $\varphi\in \mathcal C_{b,V}(L^6(0,1))$, $t>0$, the function 
\[
   H\to \Rset,\quad x\mapsto \int_0^tP_s\varphi(x)ds
\]
belongs to $D(K,\mathcal C_{b,V}(L^6(0,1)))$, and it holds 
\[
   K\left(  \int_0^tP_s\varphi ds \right)=P_t\varphi-\varphi;
\]
\item[(iv)]  %
 for any $\lambda> \omega_0$, where $\omega_0$ is as in Proposition \ref{p.B.2.1},  
 the linear operator $R(\lambda, {K})$ on $\mathcal C_{b,V}(L^6(0,1))$
defined by
\[
   R(\lambda,K)f(x)=\int_0^\infty e^{-\lambda t} {P}_tf(x)dt, \quad f\in \mathcal C_{b,V}(L^6(0,1)),\, x\in L^6(0,1) 
\]
satisfies, for any  $f\in \mathcal C_{b,V}(L^6(0,1))$ 
\[
R(\lambda,K)\in \mathcal L(\mathcal C_{b,V}(L^6(0,1))),\quad \quad \|R(\lambda,K)\|_{\mathcal L(\mathcal C_{b,V}(L^6(0,1)))}\leq 
\frac{ c_0 }{\lambda-\omega_0} 
\]
\[
  R(\lambda, {K})f\in D(K,\mathcal C_{b,V}(L^6(0,1))),\quad (\lambda I- {K})R(\lambda, {K})f=f,
\]
 where $c_0$ is as in Proposition \ref{p.B.2.1}.
We call $R(\lambda,K)$ the {\em resolvent} of $K$ at $\lambda$.
\end{itemize}
\end{Proposition}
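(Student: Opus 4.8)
The plan is to establish the four assertions in the standard order in which one proves that a $\pi$-continuous semigroup has a well-behaved generator, adapting the classical Hille--Yosida bookkeeping to the $\pi$-convergence setting (as in \cite{Priola}, \cite{Manca07}). Throughout, the key analytic input is Proposition \ref{p.B.2.1}: the bound $\|P_t\|_{\mathcal L(\mathcal C_{b,V}(L^6(0,1)))}\leq c_0 e^{\omega_0 t}$, the measure representation (ii), the pointwise continuity of $t\mapsto P_t\varphi(x)$, the semigroup law, and the $\pi$-continuity property (v). Note first that for $\lambda>\omega_0$ the integral defining $R(\lambda,K)f$ converges absolutely in the $\|\cdot\|_{0,V}$-norm, since $\|e^{-\lambda t}P_tf\|_{0,V}\leq c_0 e^{(\omega_0-\lambda)t}\|f\|_{0,V}$; moreover $x\mapsto R(\lambda,K)f(x)$ is continuous because $t\mapsto P_tf(x)$ is continuous (Proposition \ref{p.B.2.1}(iii)) and dominated convergence applies along any sequence $x_n\to x$ using $P_tf(x_n)\to P_tf(x)$, again from (iii) and (i). This gives $R(\lambda,K)\in\mathcal L(\mathcal C_{b,V}(L^6(0,1)))$ with the stated norm bound.

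Next I would prove (iii), which is the heart of the matter: for $\varphi\in\mathcal C_{b,V}(L^6(0,1))$ set $F(x)=\int_0^t P_s\varphi(x)\,ds$. Using the semigroup property and a change of variables,
\[
 \frac{P_hF(x)-F(x)}{h}=\frac1h\Bigl(\int_h^{t+h}P_s\varphi(x)\,ds-\int_0^t P_s\varphi(x)\,ds\Bigr)
 =\frac1h\int_t^{t+h}P_s\varphi(x)\,ds-\frac1h\int_0^h P_s\varphi(x)\,ds.
\]
By the continuity of $s\mapsto P_s\varphi(x)$ (Proposition \ref{p.B.2.1}(iii)), as $h\to0^+$ the right-hand side converges pointwise to $P_t\varphi(x)-\varphi(x)$, which lies in $\mathcal C_{b,V}(L^6(0,1))$ by Proposition \ref{p.B.2.1}(i). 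The required uniform bound $\sup_{h\in(0,1)}\|h^{-1}(P_hF-F)\|_{0,V}<\infty$ follows from $\|P_s\varphi\|_{0,V}\leq c_0e^{\omega_0 s}\|\varphi\|_{0,V}$, so the two averaged integrals are bounded in $\|\cdot\|_{0,V}$ uniformly for $h\in(0,1)$. Hence $F\in D(K,\mathcal C_{b,V}(L^6(0,1)))$ and $KF=P_t\varphi-\varphi$.

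From (iii), assertion (iv) is essentially the Laplace-transform computation: writing $R(\lambda,K)f=\int_0^\infty e^{-\lambda t}P_tf\,dt$ and integrating by parts one gets $R(\lambda,K)f=\lambda\int_0^\infty e^{-\lambda t}\bigl(\int_0^t P_sf\,ds\bigr)dt$; since each $\int_0^t P_sf\,ds\in D(K)$ with $K$ of it equal to $P_tf-f$, and since $K$ should commute with the (norm-convergent, $\pi$-controlled) integral, one obtains $KR(\lambda,K)f=\lambda R(\lambda,K)f-f$, i.e.\ $(\lambda I-K)R(\lambda,K)f=f$. The commutation of $K$ with $\int_0^\infty e^{-\lambda t}(\cdot)\,dt$ is the one point requiring care: it uses that $K$ is $\pi$-closed — which in turn follows from Proposition \ref{p.B.2.1}(v), since if $\psi_n\to\psi$ and $K\psi_n\to g$ in the $\pi$-sense with uniform $\|\cdot\|_{0,V}$ bounds, then applying $R(\mu,K)$ (for $\mu>\omega_0$) and passing to the limit identifies $g=K\psi$. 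Finally (i) and (ii) follow together: for $\varphi\in D(K)$ and $g=K\varphi$, the identity $P_h\varphi-\varphi=\int_0^h P_s g\,ds$ (valid by differentiating $s\mapsto P_s\varphi(x)$ using the definition of $K$ and Proposition \ref{p.B.2.1}(iii), then integrating) gives, after applying $P_t$ and using the semigroup law, $P_h(P_t\varphi)-P_t\varphi=\int_0^h P_s(P_t g)\,ds=\int_0^h P_t(P_s g)\,ds$, whence dividing by $h$ and letting $h\to0^+$ yields both $P_t\varphi\in D(K)$ and $KP_t\varphi=P_t K\varphi=P_t g$; the uniform $\|\cdot\|_{0,V}$-bound needed for membership in $D(K)$ comes from $\|P_t\|\leq c_0e^{\omega_0 t}$ applied to the bound already available for $\varphi$. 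Assertion (ii) is then the statement that $t\mapsto P_t\varphi(x)$ has derivative $P_t K\varphi(x)=P_t g(x)$, which is continuous in $t$ by Proposition \ref{p.B.2.1}(iii); continuous differentiability follows.

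The main obstacle is not any single estimate — those are all controlled by the exponential bound and pointwise continuity of Proposition \ref{p.B.2.1} — but rather the careful handling of $\pi$-convergence where, in a strongly continuous setting, one would simply invoke norm-continuity of $t\mapsto P_t\varphi$. Concretely, the delicate points are: (a) verifying that the Bochner-type integrals $\int_0^t P_s\varphi\,ds$ and $\int_0^\infty e^{-\lambda t}P_t f\,dt$ genuinely define elements of $\mathcal C_{b,V}(L^6(0,1))$ (continuity in $x$ via dominated convergence, using Proposition \ref{p.B.2.1}(i),(iii)), and (b) the commutation of $K$ with these integrals, which is where $\pi$-closedness of $K$ (a consequence of property (v)) must be used in place of ordinary closedness. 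Once $\pi$-closedness is in hand, the rest is the classical argument transcribed verbatim.
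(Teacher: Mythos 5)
Your handling of (iii) and of (i)--(ii) is sound and essentially matches the paper's route: the paper itself only details the verification that $\int_0^t P_s\varphi\,ds$ belongs to $\mathcal C_{b,V}(L^6(0,1))$ (using the uniform-in-$s$ continuity of Remark \ref{r.B.9}, where you use pointwise continuity plus dominated convergence -- both work) and refers to \cite{Manca07} for the remaining bookkeeping. One small point you should make explicit: the identity $P_h\bigl(\int_0^t P_s\varphi\,ds\bigr)(x)=\int_0^t P_{s+h}\varphi(x)\,ds$ requires interchanging the kernel integral against $\pi_h(x,dy)$ with the $ds$-integral, which is justified by Proposition \ref{p.B.2.1}(ii), the bound $|P_s\varphi(y)|\le c_0e^{\omega_0 s}\|\varphi\|_{0,V}(1+V(y))$ and Fubini.

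The genuine problem is in your proof of (iv). You commute $K$ with the Laplace integral by invoking $\pi$-closedness of $K$, and you justify $\pi$-closedness ``by applying $R(\mu,K)$ and passing to the limit''. That justification uses exactly what is being proved: to write $\psi_n=R(\mu,K)(\mu\psi_n-K\psi_n)$ you need the left-inverse identity, which rests on (ii) (proved later in your ordering), and to conclude from $\psi=R(\mu,K)(\mu\psi-g)$ that $\psi\in D(K,\mathcal C_{b,V}(L^6(0,1)))$ you need that $R(\mu,K)$ maps $\mathcal C_{b,V}(L^6(0,1))$ into the domain -- i.e.\ the statement of (iv) itself. As written the step is circular. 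Two standard repairs: (a) prove (iv) directly, bypassing closedness: by Fubini, $P_hR(\lambda,K)f(x)=e^{\lambda h}\int_h^\infty e^{-\lambda s}P_sf(x)\,ds$, hence $h^{-1}\bigl(P_hR(\lambda,K)f(x)-R(\lambda,K)f(x)\bigr)=\frac{e^{\lambda h}-1}{h}\int_h^\infty e^{-\lambda s}P_sf(x)\,ds-\frac1h\int_0^h e^{-\lambda s}P_sf(x)\,ds$, which converges pointwise to $\lambda R(\lambda,K)f(x)-f(x)$ with a uniform $\|\cdot\|_{0,V}$ bound for $h\in(0,1)$; or (b) prove (i)--(ii) first, deduce $P_t\psi_n-\psi_n=\int_0^t P_sK\psi_n\,ds$, pass to the limit in $n$ via Proposition \ref{p.B.2.1}(v) and dominated convergence, and obtain $\pi$-closedness without any reference to the resolvent -- and even then, applying the closed operator to the improper integral still requires an approximation argument (Riemann sums on $[0,T]$, then $T\to\infty$) that your sketch does not supply, which is why route (a) is preferable.
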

\begin{proof}
(i), (ii) are an easy consequence of \eqref{e.B.0} and Proposition \ref{p.B.2.1} (see the proof of Proposition 2.8 in \cite{Manca07}). 

Let us show (iii).
First, we have to check that  $\int_0^t P_s f ds$ belongs to $\mathcal C_{b,V}(L^6(0,1))$.
By (i) of Proposition \ref{p.B.2.1}, for any $x\in L^6(0,1)$ we have 
\[
  \left|\int_0^t P_s\varphi(x)ds\right| \leq  \|\varphi\|_{0,V} c_0\int_0^te^{\omega_0 s}ds(1+V(x)).
\]
then,
\[
   \sup_{x\in L^6(0,1)} \frac{1}{1+V(x)}\left|\int_0^t P_s\varphi(x)ds\right|<\infty.
\]
Now let us fix $\varepsilon >0$, $x_0\in L^6(0,1)$ and take $\delta >0$ such that 
\[
  \sup_{s\in [0,t]} \sup_{\stackrel{h\in   L^6(0,1)}{|h|_6<\delta}}\left| P_s\varphi(x_0+h)  - P_s\varphi(x_0) \right| <\frac{\varepsilon}{t}.
\]
The constant $\delta>0$ exists thank to Remark \ref{r.B.9}.
Therefore, for any $h\in  L^6(0,1)$, $|h|_6<\delta$ we have
\[
  \left| \int_0^t   P_s\varphi(x_0+h)  ds -\int_0^t  P_s\varphi(x_0)   ds\right|\leq
  \int_0^t\left| P_s\varphi(x_0+h)  - P_s\varphi(x_0)   \right| ds <\varepsilon.
\]
By the arbitrariness of $x_0$, it follows  $\int_0^t    P_s\varphi    ds \in\mathcal C_{b,V}(L^6(0,1))$.
The rest of the proof is essentially the same done for Theorem 2.9 in \cite{Manca07}.
\end{proof}
\subsection{Further results}

In this section we show that the semigroup $P_t$ is strongly continuous with respect to the weak convergence which has been introduced in \cite{Cerrai}.
\begin{Proposition} \label{p.B.3.4}
For any $p\in [2,\infty)$ and any compact set $K\subset L^p(0,1)$  it holds
\[
   \lim_{t\to 0^+}\sup_{x\in K}\left|X(t,x)-x   \right|_p=0\quad \PP-\textrm{\em a.s}.
\]
\end{Proposition}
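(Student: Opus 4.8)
The plan is to reduce the claim, via the decomposition $X(t,x) = Y(t,x) + W_A(t)$, to a corresponding uniform-convergence statement for $Y(t,x)$, since $W_A(t) \to 0$ in $L^p(0,1)$ $\PP$-a.s. as $t\to0^+$ (indeed $W_A$ has $\PP$-a.s. continuous paths in $L^p(0,1)$ with $W_A(0)=0$, and this term does not depend on $x$). So it suffices to prove
\[
   \lim_{t\to 0^+}\sup_{x\in K}\left|Y(t,x)-x\right|_p = 0 \quad \PP\text{-a.s.}
\]
for any compact $K\subset L^p(0,1)$.

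For the $Y$-part I would work pathwise, fixing $\omega$ in the full-measure event where $\theta = \sup_{t\in[0,T]}|W_A(t)|_\infty < \infty$ (cf.\ \eqref{e.B.12}) and where the mild formulation \eqref{e.B.10} holds. From \eqref{e.B.10},
\[
   Y(t,x)-x = \left(e^{tA}x - x\right) + \int_0^t e^{(t-s)A}\frac{\partial}{\partial\xi}\left(Y(s,x)+W_A(s)\right)^2 ds.
\]
The first term $e^{tA}x - x \to 0$ in $L^p$ as $t\to0^+$, \emph{uniformly for $x$ in a compact set} $K$: this is the standard fact that a $C_0$-semigroup converges strongly to the identity uniformly on compacta. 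For the second term I would estimate its $L^p$-norm exactly as in the proof of Theorem \ref{t.B.3.2}: using the smoothing estimate \eqref{e.B.13} with $s_1=-1$, $s_2=1/p$, $r=p/2$ together with the Sobolev embedding, one bounds it by
\[
   C\int_0^t\left(1+(t-s)^{-\frac12-\frac1{2p}}\right)\left|Y(s,x)+W_A(s)\right|_p^2\,ds,
\]
and then Lemma \ref{l.B.3.1} controls $|Y(s,x)|_p$ by $c_p(\theta^3+|x|_p)e^{1+2p\theta s}$, which is bounded uniformly for $s\in[0,T]$ and $x\in K$ (a compact set is bounded). Since the singularity $(t-s)^{-\frac12-\frac1{2p}}$ is integrable near $s=t$ (the exponent is $<1$ for $p\ge 2$... in fact for $p>1$), this whole integral is $O(t^{1/2-1/(2p)}) \to 0$ as $t\to0^+$, with the implied constant depending only on $T$, $\theta$, and $\sup_{x\in K}|x|_p$. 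Combining the two terms gives $\sup_{x\in K}|Y(t,x)-x|_p \to 0$.

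The main obstacle — and the only place compactness (as opposed to mere boundedness) of $K$ is genuinely needed — is the uniform strong continuity of $e^{tA}$ at $t=0$ on $K$: the estimate on the nonlinear integral term only used boundedness of $K$, but $e^{tA}x-x$ cannot be made small uniformly on bounded sets. I would handle this by the usual $\varepsilon$-net argument: cover $K$ by finitely many balls of radius $\varepsilon$ centered at $x_1,\dots,x_N$, write $|e^{tA}x-x|_p \le |e^{tA}(x-x_i)|_p + |e^{tA}x_i - x_i|_p + |x_i - x|_p \le (1+e^{\lambda_p t})\varepsilon + \max_i |e^{tA}x_i-x_i|_p$ using the contraction-type bound $|e^{tA}z|_p\le e^{\lambda_p t}|z|_p$ recalled in Section 2, and let $t\to0^+$ then $\varepsilon\to0$. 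One technical point to record carefully is that all estimates are pathwise on the single full-measure event $\{\theta<\infty\}\cap\{\text{\eqref{e.B.10} holds}\}\cap\{W_A(\cdot)\in C([0,T];L^p)\}$, so the a.s.\ claim follows; no measurability or uniformity-in-$\omega$ issues arise because the bound is only ever applied for one fixed $\omega$ at a time.
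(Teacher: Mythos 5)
Your proof is correct and follows essentially the same route as the paper's: decompose $X(t,x)=Y(t,x)+W_A(t)$, write $Y(t,x)-x$ via the mild form \eqref{e.B.10}, bound the nonlinear integral with the smoothing estimate and Lemma \ref{l.B.3.1} exactly as in the proof of Theorem \ref{t.B.3.2} (so that only boundedness of $K$ enters there), and use the uniform-on-compacta strong continuity of $e^{tA}$ plus $|W_A(t)|_p\to0$ a.s. The only difference is cosmetic: you spell out the $\varepsilon$-net argument for the uniform convergence of $e^{tA}x-x$, which the paper simply invokes as a known fact.
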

\begin{proof}
Fix $T>0$. 
As usual, $\theta_{}$ is the random variable in \eqref{e.B.12}.
For any $x\in L^p(0,1)$, $0<t<T$ we have
\[
  |X(t,x)-x|_p\leq |Y(t,x)-x|_p+|W_A(t)|_p
\]
\[
   \leq |e^{tA}x-x|_p+|W_A(t)|_p+\int_0^t\left| e^{(t-s)A}\frac{\partial}{\partial \xi}\big((Y(s,x)-W(s))^2\big)  \right|_p ds
\]
By arguing as in the proof of Theorem \ref{t.B.3.2} we find
\[
   \int_0^t\left| e^{(t-s)A}\frac{\partial}{\partial \xi}\big((Y(s,x)-W(s))^2\big)  \right|_p ds
\] 
\[
  \leq  C_1C_2\int_0^t \left(1+(t-s)^{-\frac{1}{2}-\frac{1}{2p}}   \right)\left|Y(s,x)-W(s))\right|_p ds
\]
for some $C_1, C_2>0$.
Thanks to Lemma \ref{l.B.3.1}, the last term in the right-hand side is bounded by
\[
   C_1C_2\int_0^t \left(1+(t-s)^{-\frac{1}{2}-\frac{1}{2p}}   \right)\left( c_p\left(\theta_{}^3+|x|_p   \right)e^{1+2p\theta_{} s}+\theta_{}\right)ds
\]
Finally, we have found 
\[
   |X(t,x)-x|_p\leq  |e^{tA}x-x|_p+|W_A(t)|_p
\]
\[
+C_1C_2\int_0^t \left(1+(t-s)^{-\frac{1}{2}-\frac{1}{2p}}   \right)\left( c_p\left(\theta_{}^3+|x|_p   \right)e^{1+2p\theta_{} s}+\theta_{}\right)ds.
\] 
According to the fact that $|e^{tA}x-x|_p\to 0$ as $t\to0^+$, uniformly on compact sets of $L^p(0,1)$, and that $|W(t)|_p\to0$ $\PP$-a.s. as $t\to 0^+$   we get the result.
\end{proof}

Thanks to Proposition \ref{p.B.3.4} we are able to improve the previous result.
We have
\begin{Proposition}  \label{p.B.4.3}
The semigroup $P_t$, $t\geq0$ is a strongly continuous semigroup in the mixed topology of $\mathcal C_{b,V}(L^6(0,1))$.
That is for any $\varphi\in \mathcal C_{b,V}(L^6(0,1))$, $T>0$ and any compact set $K\subset L^6(0,1)$ we have
\[
   \lim_{t\to 0^+}\sup_{x\in K}|P_t\varphi(x)-\varphi(x))| =0
\]
and
\[
   \sup_{t\in [0,T]}\|P_t\varphi\|_{0,V}<\infty.
\]
\end{Proposition}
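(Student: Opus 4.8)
The plan is to deduce both assertions directly from Proposition \ref{p.B.3.4} together with the polynomial-growth bound of Proposition \ref{p.B.7}, exactly as one proves strong continuity of a Markov semigroup at $t=0$ from the corresponding property of the underlying process. First I would observe that the second claim, $\sup_{t\in[0,T]}\|P_t\varphi\|_{0,V}<\infty$, is immediate from Proposition \ref{p.B.2.1}(i): one has $\|P_t\varphi\|_{0,V}\le c_0 e^{\omega_0 t}\|\varphi\|_{0,V}$, so the supremum over $[0,T]$ is bounded by $c_0 e^{\omega_0 T}\|\varphi\|_{0,V}$. Thus all the work is in the uniform-on-compacts limit.

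For the first claim, fix $\varphi\in\mathcal C_{b,V}(L^6(0,1))$, a compact $K\subset L^6(0,1)$ and $T>0$. For $x\in K$ write
\[
 |P_t\varphi(x)-\varphi(x)| \le \EE\big[|\varphi(X(t,x))-\varphi(x)|\big].
\]
The idea is to split this expectation according to whether $X(t,x)$ stays in a large ball of $L^6(0,1)$ or not. On the good event, continuity of $\varphi$ restricted to a ball, together with the uniform convergence $\sup_{x\in K}|X(t,x)-x|_6\to0$ from Proposition \ref{p.B.3.4}, forces $\sup_{x\in K}|\varphi(X(t,x))-\varphi(x)|\to0$ pointwise in $\omega$ (here one uses that $K$ compact in $L^6(0,1)$ means $\varphi$ is uniformly continuous on a suitable neighbourhood $K_\varepsilon$ of $K$, since $\varphi$ is continuous and $K_\varepsilon$ can be taken relatively compact — or, more simply, argue by contradiction using sequential compactness of $K$). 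On the bad event $\{|X(t,x)|_6 > R\}$, one estimates $|\varphi(X(t,x))-\varphi(x)|\le \|\varphi\|_{0,V}(2+V(X(t,x))+V(x))$ and controls $\EE[V(X(t,x))\mathbf 1_{\{|X(t,x)|_6>R\}}]$ uniformly in $x\in K$ and $t\in[0,T]$ using Proposition \ref{p.B.7}: since $\EE[\sup_{t\le T}V(X(t,x))]$ is bounded on $K$, the family $\{V(X(t,x)):t\in[0,T],x\in K\}$ — together with a Markov-type bound on $\PP(|X(t,x)|_6>R)$ — shows this tail contribution is small uniformly, provided $R$ is large. Then one lets $t\to0^+$ and finally $R\to\infty$.

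Making the "good event" argument rigorous is the main obstacle: Proposition \ref{p.B.3.4} gives $\PP$-a.s. convergence of $\sup_{x\in K}|X(t,x)-x|_6$, but $\varphi$ is only continuous (not uniformly continuous) on $L^6(0,1)$, so one must localise to a relatively compact set where uniform continuity holds, and simultaneously handle the supremum over $x\in K$. The cleanest route is: for the $\PP$-a.s. convergence, work on the full-measure event where $\sup_{x\in K}|X(t,x)-x|_6\to0$ and $\theta<\infty$; on this event $\{X(t,x):x\in K, t\le \delta\}$ is contained, for small $\delta$, in a fixed bounded set, and one uses the dominated convergence theorem (dominating function $\|\varphi\|_{0,V}(2+\sup_{t\le T}V(X(t,x))+\sup_{x\in K}V(x))$, integrable and uniform in $x\in K$ by Proposition \ref{p.B.7}) to pass the limit inside $\EE$. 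A short contradiction argument then upgrades pointwise-in-$x$ convergence to the uniform statement $\sup_{x\in K}$, using compactness of $K$ and the equicontinuity of $x\mapsto P_t\varphi(x)$ furnished by Remark \ref{r.B.9}.
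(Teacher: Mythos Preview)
The paper states this proposition without proof, simply noting that it follows from Proposition~\ref{p.B.3.4}; so there is no ``paper's own proof'' to compare against, and your sketch is in fact the kind of argument the author is implicitly leaving to the reader.

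Your outline is essentially correct. The second assertion is indeed immediate from Proposition~\ref{p.B.2.1}(i). For the first, the combination of Proposition~\ref{p.B.3.4} (a.s.\ convergence $\sup_{x\in K}|X(t,x)-x|_6\to0$), the dominated-convergence bound coming from Proposition~\ref{p.B.7}, and the equicontinuity in $x$ uniform in $t\in[0,T]$ from Remark~\ref{r.B.9} is exactly the right toolkit; the final upgrade from pointwise to uniform convergence on $K$ via equicontinuity and compactness is standard.

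One small correction: your parenthetical suggestion that ``$K_\varepsilon$ can be taken relatively compact'' is false in an infinite-dimensional Banach space---a closed $\varepsilon$-neighbourhood of a compact set need not be compact. Fortunately you immediately give the right alternative: the contradiction argument using sequential compactness of $K$ shows that
\[
\omega_K(\delta):=\sup\{|\varphi(y)-\varphi(x)|:x\in K,\ |y-x|_6\le\delta\}\longrightarrow 0\quad(\delta\to0),
\]
which is all you need on the good event. With that adjustment the argument goes through cleanly.
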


\section{The Ornstein-Uhlenbeck operator} 
\label{s.B.OU}
Here we consider the transition semigroup 
associated to the mild solution of 
the linear stochastic equation
\begin{equation} \label{e.B.3.1}
\left\{\begin{array}{lll}
       dZ(t,x)&=&  AZ(t,x) dt+  dW(t),\quad t\geq 0\\
        \\
        Z(0,x)&=&x\in H.
\end{array}\right.
\end{equation}
We recall that the solution is given by the process 
\begin{equation}\label{e.B.4a}
    Z(t,x)=e^{tA}x+W_A(t),
\end{equation} 
where $W_A(t)$, $t\geq0$ is the stochastic convolution introduced in \eqref{e.B.15}.
We recall that the process $Z(t,x)$ has a version which is, a.s. for $\omega\in \Omega$, $\alpha$-H\"older continuous with respect to $(t,x)$, for any $\alpha\in (0,\frac14)$ (see \cite{DPZ92}, Theorem 5.20 and Example 5.21). %
For any $t\geq0$ 
we define the Ornstein-Uhlenbeck (OU) semigroup $R_t$, ${t\geq0}$ by 
\begin{equation} \label{e.B.23a}
  R_t\varphi(x)= \EE\left[\varphi(Z(t,x))  \right], \quad t\geq0,\,\varphi\in \mathcal C_b(H),\,x\in H
\end{equation}
where $Z(t,x)$ is the mild solution of \eqref{e.B.3.1}.
It is well known (see, for instance, \cite{DPD98}) the following result
\begin{Proposition} \label{p.B.15}
For any $p,k\geq1$, $T>0$ there exists a constant $c_{p,k,T}>0$ such that 
\begin{equation} \label{e.B.18}
   \EE\left[\sup_{t\in [0,T]}|Z(t,x)|_{p}^k\right]\leq c_{p,k,T}\left(1+|x|_{p}^k\right).
\end{equation}
\end{Proposition}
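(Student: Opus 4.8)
The plan is to split the solution as $Z(t,x) = e^{tA}x + W_A(t)$, as given in \eqref{e.B.4a}, and estimate the two terms separately. For the deterministic part, the semigroup $e^{tA}$ acts as a contraction on every $L^p(0,1)$, $p\ge 1$, up to the exponential factor recorded in Section 2: $|e^{tA}x|_p\le e^{\lambda_p t}|x|_p$ with $\lambda_p = 2p^{-1}(p-1)\pi^2$. Hence $\sup_{t\in[0,T]}|e^{tA}x|_p^k \le e^{k\lambda_p^+ T}|x|_p^k$, which already has the required form $c(1+|x|_p^k)$. So the whole problem reduces to showing that $\EE\big[\sup_{t\in[0,T]}|W_A(t)|_p^k\big]<\infty$ for every $p,k\ge 1$, i.e. that the stochastic convolution has finite moments of all orders in $L^p$, uniformly on $[0,T]$; the resulting bound is a constant, which is absorbed into $c_{p,k,T}$.

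The key step is therefore the bound on the stochastic convolution, and the standard route is the factorization method (Da Prato--Kwapień--Zabczyk). First I would write, for $\alpha\in(0,1)$,
\[
  W_A(t) = \frac{\sin\pi\alpha}{\pi}\int_0^t (t-s)^{\alpha-1} e^{(t-s)A} Y_\alpha(s)\, ds,
  \qquad
  Y_\alpha(s) = \int_0^s (s-\sigma)^{-\alpha} e^{(s-\sigma)A}\, dW(\sigma).
\]
Using the smoothing estimate \eqref{e.B.13} for $e^{tA}$ between Sobolev spaces together with the Sobolev embedding $W^{\varepsilon,p}(0,1)\hookrightarrow C([0,1])\hookrightarrow L^p(0,1)$, one controls $\sup_{t\in[0,T]}|W_A(t)|_p$ by a constant times $\big(\int_0^T |Y_\alpha(s)|_{W^{\varepsilon,p}}^q\, ds\big)^{1/q}$ for suitable $\alpha$ and a large exponent $q$ (this is where $\alpha$ must be chosen with $\alpha q>1$ so that $(t-s)^{\alpha-1}$ is $L^{q'}$ in $s$, and simultaneously small enough that the singularity $(s-\sigma)^{-\alpha}$ produced by the fractional smoothing is integrable). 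Then I would take $k$-th moments, move the expectation inside the time integral by Fubini, and on each slice use the Gaussianity of $Y_\alpha(s)$: for a Gaussian random variable all moments are comparable to powers of the second moment, and $\EE|Y_\alpha(s)|_{W^{\varepsilon,p}}^2$ is finite and bounded uniformly in $s\in[0,T]$ because the relevant covariance operator is trace class with the required Sobolev regularity (here one uses that in dimension one the Dirichlet Laplacian $A$ has eigenvalues $\sim -k^2$, so $\sum_k \lambda_k^{-1+\varepsilon'}<\infty$). This gives $\EE\big[\sup_{t\in[0,T]}|W_A(t)|_p^k\big]\le c_{p,k,T}$.

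Combining, $\EE\big[\sup_{t\in[0,T]}|Z(t,x)|_p^k\big] \le 2^{k-1}\big(e^{k\lambda_p^+ T}|x|_p^k + c_{p,k,T}\big) \le c'_{p,k,T}(1+|x|_p^k)$, which is \eqref{e.B.18}. The one genuine technical obstacle is the bookkeeping in the factorization argument: choosing the triple of parameters $(\alpha, q, \varepsilon)$ so that simultaneously $(t-s)^{\alpha-1}\in L^{q'}(0,T)$, the map $e^{tA}\colon L^p\to W^{\varepsilon,p}$ has an integrable-in-time norm of order $t^{-\varepsilon/2-\alpha}$ after combining \eqref{e.B.13} with the extra $(t-s)^{\alpha-1}$, and the fractional convolution defining $Y_\alpha$ still has a trace-class covariance in $W^{\varepsilon,p}$; in one space dimension there is enough room to do this, but it is the part that requires care. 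Alternatively, since this is stated as ``well known,'' one may simply invoke \cite{DPD98} (or \cite{DPZ92}, Theorem 5.9 together with the Hölder-regularity statement already quoted above for $Z$) and skip the reconstruction entirely.
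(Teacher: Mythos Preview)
Your proposal is correct, and in fact it goes beyond what the paper does: the paper offers no proof at all for this proposition, merely citing it as ``well known (see, for instance, \cite{DPD98}).'' Your sketch via the decomposition $Z(t,x)=e^{tA}x+W_A(t)$ and the factorization method for the stochastic convolution is the standard route and is precisely how the cited references establish such bounds; you even note this option yourself at the end. So there is nothing to compare---you have supplied an argument where the paper supplies only a citation. One small quibble: the heat semigroup with Dirichlet conditions is in fact contractive on every $L^p(0,1)$, so your use of $\lambda_p^+$ is harmless but unnecessary (the paper's displayed inequality for $|e^{tA}x|_p$ with a positive exponent appears to be a sign slip).
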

This easily implies that $R_t$, $t\geq0$ can be extended to a semigroup in the space $\mathcal C_{b,1}(H)$  (see also \cite{Cerrai}, \cite{GK01}).
In this space, $R_t$ is not strongly continuous with respect to the supremum norm.
However, it is well known that it is strongly continuous with respect to the so called mixed topology (see \cite{GK01}) or, equivalently (see \cite{GK01} for this result), with respect to the weakly convergence introduce by Cerrai in \cite{Cerrai}.
We recall that a sequence $\{\varphi_n\}_n\subset \mathcal C_{b,1}(H)$ is said to be weakly convergent to $\varphi\in \mathcal C_{b,1}(H)$ if $\varphi_n(x)\to \varphi(x)$, $\forall x\in H$, as $n\to \infty$ uniformly on compact sets of $H$ and 
$\sup_n\|\varphi_n\|_{0,1}<\infty$.
So, following \cite{Cerrai}, it is possible to define an infinitesimal operator 
\[
  L:D(L,\mathcal C_{b,1}(H))\subset \mathcal C_{b,1}(H)\to \mathcal C_{b,1}(H)
\]
through its resolvent operator (see Definition 3.3 and Remark 3.4 in  \cite{Cerrai}).

By Theorem 3.7 of \cite{Priola}, it follows that the operator $(L,D(L,\mathcal C_{b,1}(H)))$ coincides with the operator 
\begin{equation}  \label{e.OU.0}
\begin{cases}
\DS D(L,(L,\mathcal C_{b,1}(H)))=\bigg\{ \varphi \in \mathcal C_{b,1}(H): \exists g\in \mathcal C_{b,1}(H),    \\ 
  \DS  \qquad\lim_{t\to 0^+} \frac{ R_t\varphi(x)-\varphi(x)}{t}=g(x),\,\forall x\in H,\;\sup_{t\in(0,1)}\left\|\frac{R_t\varphi-\varphi}{t}\right\|_{0,1}<\infty \bigg\}\\   
   {}   \\
  \DS L\varphi(x)=\lim_{t\to 0^+} \frac{ R_t\varphi(x)-\varphi(x)}{t},\quad \varphi\in D(L,\mathcal C_{b,1}(H)),\,x\in H.
\end{cases}
\end{equation}

It is well known the following fact (see \cite{DPZ92}, \cite{DPZ02})
\begin{equation}\label{e.OU.35}
  R_te^{i\langle \cdot,h\rangle}(x)= e^{i\langle e^{tA}x,h\rangle-\frac12\langle Q_th,h\rangle}, \quad t\geq0,\, x,\,h\in H.
\end{equation}
This implies that
 $R_t:\mathcal E_A(H)\to \mathcal E_A(H)$, $\forall t\geq0$ or,
in other words, that the set $\mathcal E_A(H)$ is stable for $R_t$.
We denote by $L_0$ is the Ornstein-Uhlenbeck  operator
\[
   L_0\varphi(x)=\frac12\textrm{Tr}\big[D^2\varphi(x)\big]+\langle x,AD\varphi(x)\rangle,\quad \varphi\in \mathcal E_A(H),\, x\in H.
\]
Notice that since $D\varphi(x)\in D(A)$, we have $L_0\varphi\in \mathcal C_{b,1}(H)$.
The next result follows by Proposition 6.2 of \cite{Manca07a}
\begin{Proposition} \label{p.L.4.3}
 For any $\varphi \in \mathcal E_A(H)$ we have   
$\varphi\in D(L,\mathcal C_{b,1}(H))$ and 
\begin{equation} \label{e.L.44}
     L\varphi(x)= L_0\varphi(x),\quad x\in H.  
\end{equation}
The set $\mathcal E_A(H)$ is a $\pi$-core for 
$ (L,D(L,\mathcal C_{b,1}(H)))$, and for any $\varphi\in D(L,\mathcal C_{b,1}(H)) $  there exist $m\in\Nset$ and an $m$-indexed sequence $(\varphi_{n_1,\ldots,n_m})_{n_1,\ldots,n_m\in\Nset}\subset \mathcal E_A(H)$ such that 
\begin{eqnarray} \label{e.L.45} 
 && \lim_{n_1\to \infty}\cdots  \lim_{n_m\to \infty}
     \frac{\varphi_{n_1,\ldots,n_m}}{1+|\cdot|_2}\stackrel{\pi}{=}\frac{\varphi}{1+|\cdot|_2},\\
  \label{e.L.46} 
   && \lim_{n_1\to \infty}\cdots  \lim_{n_m\to \infty} 
\frac{L_0\varphi_{n_1,\ldots,n_m}}{1+|\cdot|_2} \stackrel{\pi}{=}\frac{L\varphi}{1+|\cdot|_2}.
\end{eqnarray} 
Finally, if $\varphi\in D(L,\mathcal C_{b,1}(H))\cap C_b^1(H) $ we can choose the sequence in such a way that \eqref{e.L.45}, \eqref{e.L.46} hold and
\begin{equation} \label{e.L.47} 
\lim_{n_1\to \infty}\cdots  \lim_{n_m\to \infty} 
\langle D \varphi_{n_1,\ldots,n_m},h\rangle \stackrel{\pi}{=}\langle D \varphi ,h\rangle,
\end{equation}
for any $h\in H$.
\end{Proposition}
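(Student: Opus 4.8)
The plan is to verify $\mathcal E_A(H)\subseteq D(L,\mathcal C_{b,1}(H))$ together with \eqref{e.L.44} by an explicit computation based on \eqref{e.OU.35}, and then to deduce the $\pi$-core statement from the $R_t$-invariance of $\mathcal E_A(H)$ and a $\pi$-density argument, essentially as in Proposition 6.2 of \cite{Manca07a}. By linearity, taking real and imaginary parts, it suffices to treat $\varphi_h(x)=e^{i\langle x,h\rangle}$ with $h\in D(A)$. By \eqref{e.OU.35} the map $t\mapsto R_t\varphi_h(x)$ is smooth; differentiating at $t=0$ and using $\frac{d}{dt}Q_t\big|_{t=0}=I$ together with $\frac{d}{dt}\langle e^{tA}x,h\rangle\big|_{t=0}=\langle x,Ah\rangle$ (which holds because $\langle e^{tA}x,h\rangle=\langle x,e^{tA}h\rangle$ and $h\in D(A)$), one finds $\frac{d}{dt}\big|_{t=0}R_t\varphi_h(x)=\big(i\langle x,Ah\rangle-\tfrac12|h|_2^2\big)e^{i\langle x,h\rangle}$, which is exactly $L_0\varphi_h(x)$. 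To place $\varphi_h$ in the domain \eqref{e.OU.0} one needs the bound $\sup_{t\in(0,1)}\|(R_t\varphi_h-\varphi_h)/t\|_{0,1}<\infty$, and this follows from
\[
|R_t\varphi_h(x)-\varphi_h(x)|\le\tfrac12\langle Q_th,h\rangle+|\langle x,e^{tA}h-h\rangle|\le\tfrac t2|h|_2^2+t\,|x|_2\,|Ah|_2,
\]
where $\langle Q_th,h\rangle=\int_0^t|e^{sA}h|_2^2\,ds\le t|h|_2^2$ and $e^{tA}h-h=\int_0^te^{sA}Ah\,ds$. This gives the first assertion and \eqref{e.L.44}.

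For the $\pi$-core property I would combine two facts. First, \eqref{e.OU.35} shows $R_t\mathcal E_A(H)\subseteq\mathcal E_A(H)$ for every $t\ge0$. Second, $\mathcal E_A(H)$ is $\pi$-dense in $\mathcal C_{b,1}(H)$: a given $f\in\mathcal C_{b,1}(H)$ is approximated, in finitely many $\pi$-steps, first by bounded functions depending only on finitely many coordinates $\langle\cdot,e_1\rangle,\dots,\langle\cdot,e_N\rangle$ and then, uniformly on compact sets, by trigonometric polynomials in those coordinates, which lie in $\mathcal E_A(H)$ since each $e_k\in D(A)$; the number of steps yields the integer $m$. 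Then, given $\varphi\in D(L,\mathcal C_{b,1}(H))$ and $\lambda$ large, I would write $\varphi=R(\lambda,L)g$ with $g:=\lambda\varphi-L\varphi\in\mathcal C_{b,1}(H)$, choose $(g_{n_1,\dots,n_m})\subset\mathcal E_A(H)$ $\pi$-converging to $g$, and approximate $R(\lambda,L)g_{n}=\int_0^\infty e^{-\lambda t}R_tg_{n}\,dt$ by its Riemann sums $\sum_j e^{-\lambda t_j}R_{t_j}g_{n}\,\Delta t_j$. By the $R_t$-invariance these sums belong to $\mathcal E_A(H)$, and the $\pi$-continuity of the resolvent (the $R_t$-analogue of Proposition \ref{p.B.2.1}(v)) gives \eqref{e.L.45}. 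For \eqref{e.L.46} one uses $LR_t=R_tL$ on $\mathcal E_A(H)$ together with the remark that, although $L_0$ maps $\mathcal E_A(H)$ not into itself but into the larger space of (degree-one-polynomial)$\times$(trigonometric) functions, each such function is in turn a $\pi$-limit of difference quotients $s^{-1}(\varphi_h-\varphi_{h+sAh})$ of elements of $\mathcal E_A(H)$; inserting this extra approximation layer and passing to a suitable iterated limit produces the $m$-indexed sequence realizing both \eqref{e.L.45} and \eqref{e.L.46}.

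If, in addition, $\varphi\in C_b^1(H)$, the same scheme should also give \eqref{e.L.47}: $R_t$ preserves $C_b^1(H)$ with $DR_t\psi(x)=e^{tA}\,\EE[D\psi(Z(t,x))]$, the gradients of exponentials are explicit, $D$ commutes with finite Riemann sums, and the density step can be performed so that gradients converge $\pi$ as well (choosing the cylindrical and trigonometric approximants to converge in $C^1$ on compacta); dominated convergence then transports the $\pi$-convergence of gradients through the resolvent. The main obstacle is the second step: organizing the iterated $\pi$-limits so that a single $m$-indexed sequence in $\mathcal E_A(H)$ simultaneously realizes \eqref{e.L.45}, \eqref{e.L.46} (and \eqref{e.L.47}), and in particular handling the fact that $L_0$ does not preserve $\mathcal E_A(H)$. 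This bookkeeping is exactly what Proposition 6.2 of \cite{Manca07a} provides, so in the paper the natural route is to record the first step and then quote that result.
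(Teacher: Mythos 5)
Your proposal is correct, and in the end it lands where the paper does: the paper's entire proof is the observation that Proposition 6.2 of \cite{Manca07a} gives the statement in the space of uniformly continuous functions and that all the approximations carry over to $\mathcal C_{b,1}(H)$, which is exactly the "record the first step and quote that result" route you identify at the end. What you add is a self-contained reconstruction of the argument behind that citation: your verification that $\mathcal E_A(H)\subset D(L,\mathcal C_{b,1}(H))$ with $L\varphi=L_0\varphi$ via \eqref{e.OU.35}, including the bound $|R_t\varphi_h(x)-\varphi_h(x)|\le \tfrac t2|h|_2^2+t|x|_2|Ah|_2$ needed for the $\sup_{t\in(0,1)}$ condition in \eqref{e.OU.0}, is correct and is precisely the computation the paper leaves to the reference, and your resolvent/Riemann-sum scheme (using $R_t\mathcal E_A(H)\subset\mathcal E_A(H)$ and $\pi$-density of $\mathcal E_A(H)$) is the standard way to obtain the $\pi$-core property, with \eqref{e.B.diff} handling \eqref{e.L.47} as you say. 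One simplification worth noting: the "extra approximation layer" via difference quotients $s^{-1}(\varphi_h-\varphi_{h+sAh})$ is not needed, because \eqref{e.L.46} never asks that $L_0\varphi_{n_1,\dots,n_m}$ belong to $\mathcal E_A(H)$; once you write $\varphi_{n_1}=R(\lambda,L)g_{n_1}$ with $g_{n_1}\in\mathcal E_A(H)$, the identity $L\varphi_{n_1}=\lambda\varphi_{n_1}-g_{n_1}$ together with $L_0R_t=R_tL_0$ on $\mathcal E_A(H)$ already makes $L_0$ of the (truncated) Riemann sums converge $\pi$ to $L\varphi_{n_1}$ and then to $L\varphi$, so the only genuine bookkeeping is the iterated-limit organization that \cite{Manca07a} supplies.
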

\begin{proof}
Proposition 6.2 of \cite{Manca07a} shows that \eqref{e.L.44} holds when the OU semigroup and the corresponding generator is considered in the space con {\em uniformly} continuous functions. 
However, one can see that the all the approximations hold also in $\mathcal C_{b,1}(H)$.
\end{proof}

The following results are proved in \cite{Manca07a}  in  spaces of uniformly continuous functions instead of $\mathcal C_{b,1}(H)$. 
However, one can see that the proof  can be easy adapted to $\mathcal C_{b,1}(H)$.

\begin{Proposition} \label{p.L.2.9}
Let $R_t$, $t\geq0$ be the OU semigroup \eqref{e.B.23a} in the space
$\mathcal C_{b,1}(H)$ 
and let $(L,D(L,\mathcal C_{b,1}(H))) $   
be its infinitesimal generators, defined by  \eqref{e.OU.0}.  
Then
\begin{itemize}
\item[(i)] $R_t\in \mathcal L( C_{b,1}(H))$ and  $R_t R_s=  R_{t+s}$, for any $t,s\geq0$;
\item[(ii)] for any $\varphi\in C_{b,1}(H)$ and any sequence $(\varphi_n)_{n\in \Nset}\subset C_{b,1} (H)$ such that 
\[
  \lim_{n\to\infty} \frac{\varphi_n}{1+|\cdot|^{}}  \stackrel{\pi}{=}\frac{\varphi }{1+|\cdot|^{}} 
\]
we have, for any $t\geq0$,  
\[
  \lim_{n\to\infty} \frac{{P}_t\varphi_n}{1+|\cdot|^{}}  \stackrel{\pi}{=}\frac{{P}_t\varphi }{1+|\cdot|^{}}; 
\]
\item[(iii)] for any $\varphi \in D(L,\mathcal C_{b,1}(H))$, we have $  R_t\varphi \in D(L,\mathcal C_{b,1}(H)))$ and 
     $  LR_t\varphi =  R_t  L\varphi$, $t\geq0$;
\item[(iv)] for any $\varphi \in D(L,\mathcal C_{b,1}(H))$, $x\in H$, the map $[0,\infty)\to \Rset$, $t\mapsto  R_t\varphi(x)$ is continuously differentiable and $(d/dt) R_t\varphi(x) =  R_t L\varphi(x)$;
%
\item[(v)]  
 for any $\lambda>\omega_0$ the linear operator $R(\lambda, L)$ on $\mathcal C_{b,1}(H)$ defined by
\[
   R(\lambda, L)f(x)=\int_0^\infty e^{-\lambda t} R_tf(x)dt, \quad f\in \mathcal C_{b,1}(H),\,x\in H
\]
satisfies, for any  $f\in C_{b,1}(H)$
\[
   R(\lambda, L)\in \mathcal L(\mathcal C_{b,1}(H)),
\]
\[
  R(\lambda, L)f\in D(L,\mathcal C_{b,1}(H)),\quad (\lambda I- L)R(\lambda, L)f=f.
\]
We call $R(\lambda, L)$ the {\em resolvent} of $  L$ at $\lambda$;
\item[(vi)] for any $\varphi\in \mathcal C_{b,1}(H)$, $t>0$, the function 
\[
   H\to \Rset,\quad x\mapsto \int_0^tR_s\varphi(x)ds
\]
belongs to $D(L,\mathcal C_{b,1}(H))$, and it holds 
\[
   L\left(  \int_0^tR_s\varphi ds \right)=R_t\varphi-\varphi.
\] 
\end{itemize}
\end{Proposition}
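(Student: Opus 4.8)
The plan is to transfer the corresponding statements of \cite{Manca07a}, proved there for the OU semigroup on spaces of \emph{uniformly} continuous functions, to the weighted space $\mathcal C_{b,1}(H)$; the only structural change is that the weight $1+|\cdot|_2$ together with the moment estimate of Proposition \ref{p.B.15} (taken with $p=2$, $k=1$) must replace the sup-norm bounds used in \cite{Manca07a}. Throughout, two facts carry the argument: the explicit Gaussian representation $R_t\varphi(x)=\int_H\varphi(e^{tA}x+y)\,\mathcal N(0,Q_t)(dy)$, and the inequality $\EE[|Z(t,x)|_2]\le c_{2,1,t}(1+|x|_2)$. The latter gives at once $\|R_t\varphi\|_{0,1}\le c_{2,1,t}\|\varphi\|_{0,1}$, hence $R_t\in\mathcal L(\mathcal C_{b,1}(H))$ with $\|R_t\|_{\mathcal L(\mathcal C_{b,1}(H))}\le c_0e^{\omega_0t}$ for suitable $c_0\ge1$, $\omega_0\in\Rset$; combined with dominated convergence (the $L^2$-norm of $y$ is $Q_t$-integrable since $Q_t$ is trace class) it also yields the continuity of $x\mapsto R_t\varphi(x)$ and property (ii): if $(1+|\cdot|_2)^{-1}\varphi_n\stackrel{\pi}{\to}(1+|\cdot|_2)^{-1}\varphi$, then $R_t\varphi_n(x)=\int_H\varphi_n(e^{tA}x+y)\,\mathcal N(0,Q_t)(dy)\to R_t\varphi(x)$ for every $x$, with the uniform bound coming from $\sup_n\|\varphi_n\|_{0,1}$. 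The semigroup identity $R_tR_s=R_{t+s}$ in (i) is Chapman--Kolmogorov for the Gaussian kernels, first verified on bounded continuous $\varphi$ and then extended to all of $\mathcal C_{b,1}(H)$ by approximating $\varphi$ with the truncations $(\varphi\wedge n)\vee(-n)$ and invoking (ii).

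Items (iii), (iv), (vi) are then the standard semigroup-calculus identities carried out with $\pi$-limits in place of norm limits. For (vi) one writes $\tfrac1h(R_h-I)\int_0^tR_s\varphi\,ds=\tfrac1h\int_t^{t+h}R_s\varphi\,ds-\tfrac1h\int_0^hR_s\varphi\,ds$, which converges pointwise to $R_t\varphi(x)-\varphi(x)$ by continuity of $s\mapsto R_s\varphi(x)$, while its $\|\cdot\|_{0,1}$-norm stays bounded by $2\sup_{s\le t+1}\|R_s\varphi\|_{0,1}$; that $x\mapsto\int_0^tR_s\varphi(x)\,ds$ lies in $\mathcal C_{b,1}(H)$ is checked exactly as in the proof of Proposition \ref{p.B.2.9}(iii), using the mixed-topology strong continuity of $R_t$. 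Property (iii) follows from $R_hR_t=R_tR_h$ and the domain bound in \eqref{e.OU.0}, passing to the $\pi$-limit $h\to0^+$ inside $R_t$ via (ii). For (iv), the right derivative at $t\ge0$ equals $\lim_{h\to0^+}R_t\big((R_h\varphi-\varphi)/h\big)(x)$; since $(R_h\varphi-\varphi)/h\to L\varphi$ pointwise with $\sup_{h\in(0,1)}\|(R_h\varphi-\varphi)/h\|_{0,1}<\infty$, property (ii) lets us pass the limit through $R_t$, giving $R_tL\varphi(x)$, and the left derivative at $t>0$ is treated the same way after writing $R_{t-h}\big((R_h\varphi-\varphi)/h\big)$ and using the strong continuity of $R_t$ in the mixed topology, uniformly over the $\|\cdot\|_{0,1}$-bounded family $\{(R_h\varphi-\varphi)/h:h\in(0,1)\}$, to replace $R_{t-h}$ by $R_t$; continuity of $t\mapsto R_tL\varphi(x)$ then follows since $L\varphi\in\mathcal C_{b,1}(H)$.

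Finally, for (v): for $\lambda>\omega_0$ the bound $\|R_tf\|_{0,1}\le c_0e^{\omega_0t}\|f\|_{0,1}$ makes $R(\lambda,L)f(x)=\int_0^\infty e^{-\lambda t}R_tf(x)\,dt$ absolutely convergent with $\|R(\lambda,L)f\|_{0,1}\le \tfrac{c_0}{\lambda-\omega_0}\|f\|_{0,1}$, and $R(\lambda,L)f\in\mathcal C_{b,1}(H)$ by dominated convergence in $x$; the resolvent identity is obtained from $\tfrac1h(R_h-I)R(\lambda,L)f=\tfrac{e^{\lambda h}-1}{h}R(\lambda,L)f-\tfrac{e^{\lambda h}}{h}\int_0^he^{-\lambda s}R_sf\,ds$, letting $h\to0^+$: the right side $\pi$-converges to $\lambda R(\lambda,L)f-f$ with the required uniform $\|\cdot\|_{0,1}$-bound, which simultaneously shows $R(\lambda,L)f\in D(L,\mathcal C_{b,1}(H))$ and $(\lambda I-L)R(\lambda,L)f=f$.

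The main obstacle I anticipate is bookkeeping rather than conceptual: at every step where a limit is moved inside $R_t$, inside $\int_0^tR_s\,ds$, or inside the Laplace integral, one must check both the pointwise convergence and the uniform $\|\cdot\|_{0,1}$-bound that together constitute $\pi$-convergence, and in each case the domination traces back to Proposition \ref{p.B.15}. The one place that needs slightly more than a verbatim copy of \cite{Manca07a} is the left-differentiability in (iv), where one genuinely invokes the strong continuity of $R_t$ in the mixed topology, uniformly over $\|\cdot\|_{0,1}$-bounded families of initial functions.
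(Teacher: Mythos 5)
Your write-up is essentially the route the paper itself takes: the paper's "proof" of this proposition is the one-line remark that the statements are proved in \cite{Manca07a} for the OU semigroup on spaces of uniformly continuous functions and that the argument adapts to $\mathcal C_{b,1}(H)$, and your proposal is exactly that adaptation, with the weight $1+|\cdot|_2$ controlled through Proposition \ref{p.B.15} (with $p=2$, $k=1$), the Gaussian representation of $R_t$, and dominated convergence. Items (i), (ii), (iii), (v) and (vi) are fine as you argue them: in each case the $\pi$-limit is legitimately moved inside $R_t$, inside $\int_0^t R_s\,ds$, or inside the Laplace integral because you have both the pointwise limit and the uniform $\|\cdot\|_{0,1}$-bound.

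The one step I would not let stand as written is the left derivative in (iv). You replace $R_{t-h}$ by $R_t$ by invoking "strong continuity of $R_t$ in the mixed topology, uniformly over $\|\cdot\|_{0,1}$-bounded families". For a fixed $\psi$ one does have $R_{t-h}\psi(x)\to R_t\psi(x)$, but uniformity over a family that is merely norm-bounded is not available: writing $R_{t-h}\psi_h(x)-R_t\psi_h(x)=\EE\left[\psi_h(Z(t-h,x))-\psi_h(Z(t,x))\right]$ with $\psi_h=(R_h\varphi-\varphi)/h$, the a.s. convergence $Z(t-h,x)\to Z(t,x)$ yields nothing unless $\{\psi_h\}$ is equicontinuous or converges uniformly on compact sets, and neither is guaranteed by $\varphi\in D(L,\mathcal C_{b,1}(H))$, whose definition \eqref{e.OU.0} gives only pointwise convergence plus a norm bound. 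The standard repair avoids the left derivative altogether: by your right-derivative computation and (iii), the continuous function $t\mapsto R_t\varphi(x)$ has right derivative $t\mapsto R_tL\varphi(x)$ at every $t\geq0$, and this right derivative is continuous in $t$ because $L\varphi\in\mathcal C_{b,1}(H)$; a continuous function with continuous right derivative is continuously differentiable (equivalently, integrate the right derivative to get $R_t\varphi(x)-\varphi(x)=\int_0^t R_sL\varphi(x)\,ds$ and differentiate). With that substitution your proof is complete and coincides with the argument the paper delegates to \cite{Manca07a}.
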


We now study the OU semigroup $R_t$ in the space $C_{b,V}(L^6(0,1))$.
Proposition \ref{p.B.15} implies
that for any $T>0$ there exists $c_T>0$ such that 
\begin{equation}  \label{e.B.22}
  \EE\left[\sup_{t\in [0,T]}V(Z(s,x))\right]\leq c_T\left(1+V(x)\right).
\end{equation}
%
%
%
%
%
Clearly, \eqref{e.B.22} shows that $R_t$ acts on $\mathcal C_{b,V}(L^6(0,1))$.
It is obvious that all the results of Proposition \ref{p.B.2.1} holds also for the OU semigroup  $R_t$, ${t\geq0}$.
We define the infinitesimal generator of $R_t$, ${t\geq0}$ in $\mathcal C_{b,V}(L^6(0,1))$ by setting
\begin{equation}  \label{e.B.23}
\begin{cases}
\DS D(L_V,\mathcal C_{b,V}(L^6(0,1)))=\bigg\{ \varphi \in \mathcal C_{b,V}(L^6(0,1)): \exists g\in \mathcal C_{b,V}(L^6(0,1)),    \\ 
  \DS  \qquad\lim_{t\to 0^+} \frac{ R_t\varphi(x)-\varphi(x)}{t}=g(x),\,\forall x\in L^6(0,1),\;\sup_{t\in(0,1)}\left\|\frac{R_t\varphi-\varphi}{t}\right\|_{0,V}<\infty \bigg\}\\   
   {}   \\
  \DS  L_V\varphi(x)=\lim_{t\to 0^+} \frac{ R_t\varphi(x)-\varphi(x)}{t},\quad \varphi\in D(L_V,\mathcal C_{b,V}(L^6(0,1))),\,x\in L^6(0,1).
\end{cases}
\end{equation}
\begin{Remark}\label{r.B.13}
{\em 
 Since all the results of Proposition \ref{p.B.2.1} hold for the OU semigroup, it follows that all the results of Proposition \ref{p.B.2.9} hold for the OU semigroup and its infinitesimal generator in $\mathcal C_{b,V}(L^6(0,1))$.
 }
\end{Remark}
%
\begin{Proposition} \label{p.B.6.4}
We have  $\mathcal E_A(H)\subset D(L_V,\mathcal C_{b,V}(L^6(0,1)))$, and $L_V\varphi(x)=L\varphi(x)=L_0\varphi$, for any $\varphi \in \mathcal E_A(H)$, $x\in L^6(0,1)$.
\end{Proposition}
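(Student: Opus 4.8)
The plan is to transfer what is already known about $\mathcal E_A(H)$ as a subset of the domain $D(L,\mathcal C_{b,1}(H))$ (Proposition \ref{p.L.4.3}) to the present weighted space $\mathcal C_{b,V}(L^6(0,1))$. Fix $\varphi \in \mathcal E_A(H)$, say a real or imaginary part of $x \mapsto e^{i\langle x,h\rangle}$ with $h \in D(A)$. The first step is to record that $\varphi \in \mathcal C_{b,V}(L^6(0,1))$ and that $L_0\varphi \in \mathcal C_{b,V}(L^6(0,1))$ as well: indeed $\varphi$ is bounded, and since $D\varphi(x) \in D(A)$ and $D^2\varphi(x)$ is a fixed trace-class-type operator independent of $x$ up to bounded scalar factors, the function $L_0\varphi(x) = \frac12\textrm{Tr}[D^2\varphi(x)] + \langle x, AD\varphi(x)\rangle$ has at most linear growth in $|x|_2$, hence is dominated by $1 + V(x)$ on $L^6(0,1)$ (note $|x|_2 \le |x|_6$ by Hölder on $[0,1]$, and $V(x) = |x|_6^8|x|_4^2$ grows much faster). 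So both $\varphi$ and the candidate generator value $L_0\varphi$ live in the correct space.

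Next I would use the explicit formula \eqref{e.OU.35}, $R_te^{i\langle\cdot,h\rangle}(x) = e^{i\langle e^{tA}x,h\rangle - \frac12\langle Q_th,h\rangle}$, which shows $R_t\varphi \in \mathcal E_A(H)$ for every $t \ge 0$, and moreover gives a clean pointwise computation of $\lim_{t\to0^+}\frac{R_t\varphi(x)-\varphi(x)}{t}$. Differentiating \eqref{e.OU.35} in $t$ at $t=0$ (using $h \in D(A)$ so that $\frac{d}{dt}e^{tA}x|_{t=0} = Ax$ makes sense against $h$, i.e. $\langle Ax,h\rangle = \langle x,Ah\rangle$, and $\frac{d}{dt}\langle Q_th,h\rangle|_{t=0} = 2|h|_2^2$) yields exactly $L_0\varphi(x)$ pointwise, for every $x \in L^6(0,1)$. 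This is the same computation that underlies Proposition \ref{p.L.4.3}; here it is being read off on $L^6(0,1)$ rather than on $H$.

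The remaining point, and the only one requiring a small estimate, is the uniform bound $\sup_{t\in(0,1)}\left\|\frac{R_t\varphi-\varphi}{t}\right\|_{0,V} < \infty$. Here I would write, for fixed $x$, the second-order Taylor-with-remainder form
\[
\frac{R_t\varphi(x)-\varphi(x)}{t} = \frac1t\int_0^t R_s L_0\varphi(x)\,ds,
\]
which is legitimate because $\varphi \in \mathcal E_A(H) \subset D(L,\mathcal C_{b,1}(H))$ by Proposition \ref{p.L.4.3} and the identity $R_t\varphi - \varphi = \int_0^t R_sL\varphi\,ds = \int_0^t R_s L_0\varphi\,ds$ holds pointwise (this is Proposition \ref{p.L.2.9}(vi)/(iv) applied in $\mathcal C_{b,1}(H)$, and the pointwise values are the same on $L^6(0,1)$). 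Then, since $R_s$ acts on $\mathcal C_{b,V}(L^6(0,1))$ with $\|R_s\|_{\mathcal L(\mathcal C_{b,V}(L^6(0,1)))} \le c_0e^{\omega_0 s}$ (the analogue of Proposition \ref{p.B.2.1}(i) for the OU semigroup, valid by \eqref{e.B.22} as noted in the text), we get
\[
\left\|\frac{R_t\varphi-\varphi}{t}\right\|_{0,V} \le \frac1t\int_0^t \|R_sL_0\varphi\|_{0,V}\,ds \le \frac1t\int_0^t c_0e^{\omega_0 s}\,ds\,\|L_0\varphi\|_{0,V} \le c_0e^{|\omega_0|}\|L_0\varphi\|_{0,V},
\]
uniformly in $t \in (0,1)$. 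Combining the three steps, $\varphi \in D(L_V,\mathcal C_{b,V}(L^6(0,1)))$ with $L_V\varphi = L_0\varphi$; and the equality $L_0\varphi = L\varphi$ on these arguments is Proposition \ref{p.L.4.3}. I expect the only mildly delicate point to be checking the growth bound $|L_0\varphi(x)| \le C(1+V(x))$ rigorously — i.e. making sure the linear-in-$|x|_2$ term is genuinely controlled by $V$ on $L^6(0,1)$ — but this is routine given the definition of $V$ and the embedding $L^6(0,1) \hookrightarrow L^2(0,1)$.
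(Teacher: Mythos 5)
Your argument is correct, but it takes a slightly longer route than the paper. The paper's proof is two lines: since $1+|x|_2\le C(1+V(x))$ on $L^6(0,1)$ (because $|x|_2\le|x|_4\le|x|_6$, so $|x|_2^{10}\le V(x)$), the restriction map gives a continuous embedding $\mathcal C_{b,1}(H)\subset \mathcal C_{b,V}(L^6(0,1))$, and since the two generators are defined from the \emph{same} semigroup $R_t$ by the same pointwise-limit-plus-uniform-bound recipe, this embedding immediately yields $D(L,\mathcal C_{b,1}(H))\subset D(L_V,\mathcal C_{b,V}(L^6(0,1)))$ with $L_V\varphi=L\varphi$ there; Proposition \ref{p.L.4.3} then finishes. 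You instead re-verify the two conditions in \eqref{e.B.23} directly: the pointwise limit from the explicit formula \eqref{e.OU.35}, and the uniform bound from the identity $R_t\varphi-\varphi=\int_0^t R_sL_0\varphi\,ds$ (Proposition \ref{p.L.2.9}) together with the weighted operator bound for $R_s$ on $\mathcal C_{b,V}(L^6(0,1))$. This is valid, and note that your ``only delicate point'' ($|L_0\varphi(x)|\le C(1+|x|_2)\le C'(1+V(x))$) is exactly the same inequality that powers the paper's embedding argument, which moreover makes your sup-bound step automatic (the $\|\cdot\|_{0,V}$-bound follows from the $\|\cdot\|_{0,1}$-bound already guaranteed by $\varphi\in D(L,\mathcal C_{b,1}(H))$, with no need for the integral representation).

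One small slip: with the paper's convention $Q_t=\int_0^t e^{2sA}\,ds$, one has $\frac{d}{dt}\langle Q_th,h\rangle\big|_{t=0}=|h|_2^2$, not $2|h|_2^2$; with your stated value the derivative of \eqref{e.OU.35} at $t=0$ would be $L_0\varphi-\tfrac12|h|_2^2\varphi$ rather than $L_0\varphi$. The correct value gives exactly $\tfrac12\mathrm{Tr}[D^2\varphi]=-\tfrac12|h|_2^2\varphi$, so your conclusion stands; in any case this computation is redundant, since $L\varphi=L_0\varphi$ pointwise on $H\supset L^6(0,1)$ is already supplied by Proposition \ref{p.L.4.3}, which you invoke anyway.
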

\begin{proof}
Since $\mathcal C_{b,1}(H)\subset \mathcal C_{b,V}(L^6(0,1))$ with continuous embedding, it follows
$D(L,\mathcal C_{b,1}(H))\subset D(L_V, \mathcal C_{b,V}(L^6(0,1))) $.
Then by Proposition \ref{p.B.6.4} we have $\mathcal E_A(H) \subset \mathcal C_{b,V}(L^6(0,1))$ and
$L_V\varphi(x)=L_0\varphi(x)$, $\varphi\in\mathcal E_A(H)  $, $x\in L^6(0,1)$.
\end{proof}

We conclude this section by the following formula, which will be often recalled throughout the paper
\begin{equation} \label{e.B.diff}
  \langle DR_t\varphi(x),h\rangle = R_t\left(\langle D\varphi, e^{tA}h\rangle   \right)(x),\quad t\geq0,\,\varphi\in \mathcal C_b^1(H),\,x,h\in H.
\end{equation}
Clearly, by the above formula it follows that $ \mathcal C_b^1(H)$ is stable under $R_t$.
\section{The approximated problem}
 It is convenient to consider the usual Galerkin approximations 
of equation \eqref{e.B.3}. 
For any $m\in\Nset$ we define
\[
   b_m(x)=P_m b(P_mx),\quad x\in H 
\]
where
\[
  P_m=\sum_{i=1}^me_i\otimes e_i,\quad m\in \Nset. 
\]
We consider the approximating problem 
\begin{equation} \label{e5.11}
\left\{\begin{array}{lll}
     dX^m(t)&=&(AX^m(t)+b_m(X^m(t))dt+dW(t),\\\\
     X^m(0)&=&x,
 \end{array}\right. 
\end{equation}
By setting $Y^m(t,x)=X^m(t,x)-W_A(t)$, the corresponding mild form is
\begin{equation} \label{e.B.16}
   Y^m(t,x)=e^{tA}x+\frac{1}{2}\int_{0}^{t}e^{(t-s)A} P_m  D_\xi 
          \left(P_m(Y^m(s,x)+W_A(s))\right)^2  ds,
\end{equation}
Since for any $m\in \Nset$ the identity
\[
   \langle b_m(x),x\rangle =0,\quad x\in H
\]
holds, 
  all the estimates of Proposition \ref{p.B.7}, \ref{p.B.5} are uniform on $m$ and we have the following result.   
\begin{Theorem} \label{t.B.10}   
For any $x \in L^p(0,1),\,p\in [2,\infty) $  there exists a unique mild solution $X^m \in L^p(0,1)$ of
equation \eqref{e5.11}. 
Moreover, for any $x_0\in L^p(0,1)$, $\delta>0$ and  $T>0$ 
\[
   \lim_{m\to\infty}\sup_{\stackrel{|x-x_0|_p<\delta}{t\in [0,T]}}|X^m(t,x)-X(t,x)  |_p =0
\]
\end{Theorem}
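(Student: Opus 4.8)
\textbf{Proof proposal for Theorem \ref{t.B.10}.}

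The plan is to treat the two assertions separately, exploiting the fact that the finite-dimensional truncation $b_m$ enjoys exactly the same structural identity $\langle b_m(x),x\rangle=0$ that makes the a priori estimates for the full equation work, so that Lemma \ref{l.B.3.1} and Theorem \ref{t.B.3.2} have $m$-uniform analogues. First I would establish existence and uniqueness of the mild solution $X^m$ (equivalently $Y^m$ solving \eqref{e.B.16}) for each fixed $m$. This is a routine fixed-point argument: on the space $C([0,\tau];L^p(0,1))$ the map $Y\mapsto e^{tA}x+\tfrac12\int_0^t e^{(t-s)A}P_m D_\xi(P_m(Y(s)+W_A(s)))^2\,ds$ is a contraction for $\tau$ small enough, using the smoothing estimate \eqref{e.B.13} with $s_1=-1$, $s_2=1/p$, $r=p/2$ (exactly as in the proof of Theorem \ref{t.B.3.2}) together with the boundedness of $P_m$ on every $L^q$; the singularity $(t-s)^{-1/2-1/(2p)}$ is integrable, so Gronwall extends the local solution to arbitrary $[0,T]$. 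Global existence follows from the $m$-uniform bound of Lemma \ref{l.B.3.1}, which holds for $Y^m$ because $\langle b_m(x),x\rangle=0$ forces the same energy cancellation used in \cite{DPDT}.

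Next I would prove the convergence statement. Set $R^m(t,x)=Y^m(t,x)-Y(t,x)$. Subtracting \eqref{e.B.16} from \eqref{e.B.10}, one writes $R^m$ as a sum of a ``commutator'' term measuring the difference between $P_m D_\xi P_m(\cdot)^2$ and $D_\xi(\cdot)^2$ evaluated at $Y+W_A$, plus a term linear in $R^m$ itself with coefficients controlled by $|Y^m|_p+|Y|_p+|W_A|_p$, which are bounded uniformly in $m$ and in $x$ on the ball $|x-x_0|_p<\delta$ and uniformly in $t\in[0,T]$ thanks to Lemma \ref{l.B.3.1}. Applying \eqref{e.B.13} and the Sobolev embedding exactly as in Theorem \ref{t.B.3.2}, then Gronwall, one gets
\[
  \sup_{t\in[0,T]}|R^m(t,x)|_p \leq C(T,\delta,\theta)\int_0^T\left(1+(T-s)^{-\frac12-\frac1{2p}}\right)\big|\,(I-P_m)\,\text{[quadratic term]}\,\big|_{W^{-1,p/2}}\,ds,
\]
so it suffices to show the right-hand side tends to $0$ uniformly for $x$ in the ball. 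Since $W^{-1,p/2}(0,1)\supset L^{p/2}(0,1)$ and $(I-P_m)\to 0$ strongly on $L^2$, the pointwise (in $s$, for fixed $x$) convergence is clear; uniformity in $x$ over the $L^p$-ball comes from the equicontinuity furnished by Theorem \ref{t.B.3.2} (the solution map $x\mapsto Y(\cdot,x)$ is uniformly continuous on bounded sets), together with a compactness/finite-net argument, plus dominated convergence in $s$ using the integrable singularity. Finally, since $X^m(t,x)-X(t,x)=Y^m(t,x)-Y(t,x)=R^m(t,x)$, the claimed limit follows.

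The main obstacle I anticipate is handling the projector $P_m$ inside the nonlinearity in negative Sobolev norms: one must bound $|(I-P_m)D_\xi(u^2) - D_\xi((I-P_m)u)^2|$ type expressions in $W^{-1,p/2}$ and check that the bound degenerates only through $|(I-P_m)u|$ in a strong-enough norm, and that this happens uniformly over $x$ in the given ball. This is where the $m$-uniform $L^\infty$-type bounds on $\theta$ and the $L^p$-bounds on $Y,Y^m$ from Lemma \ref{l.B.3.1} are essential, and where one has to be slightly careful that $P_m$ is \emph{not} bounded on $L^\infty$ with constant independent of $m$ — so all estimates must be routed through $L^q$-norms with finite $q$ (here $q=p$ and $q=p/2$), which is precisely the framework already set up in the proof of Theorem \ref{t.B.3.2}. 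Once that commutator estimate is in place, the rest is the same Gronwall-plus-dominated-convergence machinery used repeatedly above.
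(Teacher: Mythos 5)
Your existence step and the overall Gronwall scheme are fine and consistent with what the paper intends (the paper in fact gives no proof of Theorem \ref{t.B.10}: it only observes that $\langle b_m(x),x\rangle=0$ makes the estimates of Propositions \ref{p.B.7} and \ref{p.B.5} uniform in $m$ and refers implicitly to \cite{DPDT}), but there is a genuine gap in the step where you claim uniformity of the convergence over the ball $\{|x-x_0|_p<\delta\}$. You propose to get it from the equicontinuity of $x\mapsto Y(\cdot,x)$ (Theorem \ref{t.B.3.2}) ``together with a compactness/finite-net argument''. A closed ball of $L^p(0,1)$ is not totally bounded, so there is no finite net: equicontinuity plus pointwise convergence gives uniform convergence only on compact sets, not on balls. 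The problem is real, not just technical: the forcing term in your Gronwall inequality contains expressions like $(I-P_m)X(s,x)$ and $(I-P_m)D_\xi\big(X(s,x)^2\big)$, and for $s$ near $0$ one has $X(s,x)\approx x$, while $\sup_{|x-x_0|_p<\delta}|(I-P_m)x|_p$ does \emph{not} tend to $0$ (take $x=x_0+c\,e_{m+1}$). So strong convergence $P_m\to I$ on $L^{p/2}$ or $L^p$, by itself, cannot make the right-hand side of your estimate small uniformly in $x$.

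The gap can be closed, but by compactness in the \emph{state space} rather than in the ball of initial data. For fixed $\omega$ and fixed $\epsilon>0$, the smoothing estimate \eqref{e.B.13} shows that $e^{sA}$, $s\geq\epsilon$, maps the $L^p$-ball into a bounded subset of $W^{\sigma,p}(0,1)$ for some $\sigma>0$, and the same estimate applied to the integral term (from $W^{-1,p/2}$ into $W^{\sigma',p/2}\hookrightarrow W^{\sigma,p}$ with $\sigma'<1$) shows that the set $\{X(s,x):\,s\in[\epsilon,T],\,|x-x_0|_p\leq\delta\}$ is precompact in $L^p(0,1)$ a.s.; on a compact set, $(I-P_m)\to0$ uniformly (uniform $L^q$-boundedness of the Fourier projections, $1<q<\infty$, plus strong convergence). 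The contribution of $s\in[0,\epsilon]$ (and of small $t$) is handled separately: there the integrand is merely bounded, uniformly in $m$ and in $x$ by Lemma \ref{l.B.3.1}, and the kernel $(t-s)^{-\frac12-\frac1{2p}}$ is integrable, so this piece is $O(\epsilon^{\frac12-\frac1{2p}})$ uniformly. With this replacement of your finite-net argument the proof goes through; note also that at $p=2$ the inner space is $L^{p/2}=L^1$, where the projections $P_m$ are not uniformly bounded, so the commutator estimates should be routed through exponents strictly between $1$ and $\infty$ there.
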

We   denote by   $P_t^m$ the transition semigroup  
\begin{equation}   \label{e.B.19}
      P^m_t\varphi(x)=\EE[\varphi(X^m(t,x))],\quad t\geq0,\,\varphi\in \mathcal C_{b,V}(L^6(0,1)), \, x\in L^6(0,1)
\end{equation}
By a standard argument, we find that for any $\mathcal C_{b,V}(L^6(0,1))$ we have 
\[
     \lim_{m\to \infty}\frac{P^m_t\varphi}{1+V}\stackrel{\pi}{=}\frac{P_t\varphi}{1+V},\quad t\ge 0. 
\]
For any $m\in \Nset$, we define the infinitesimal generator of the semigroup $P_t^m$, $t\geq0$ by
\begin{equation}  \label{e.B.20}
\begin{cases}
\DS D(K_m,\mathcal C_{b,V}(L^6(0,1)))=\bigg\{ \varphi \in \mathcal C_{b,V}(L^6(0,1)): \exists g\in \mathcal C_{b,V}(L^6(0,1)),   \\ 
  \DS  \qquad \lim_{t\to 0^+} \frac{ P^m_t\varphi(x)-\varphi(x)}{t}=g(x),\,x\in L^6(0,1),\;\sup_{t\in(0,1)}\left\|\frac{P^m_t\varphi-\varphi}{t}\right\|_{0,V}<\infty \bigg\}\\   
   {}   \\
  \DS  K_m\varphi(x)=\lim_{t\to 0^+} \frac{ {P}_t\varphi(x)-\varphi(x)}{t},\quad
   \varphi\in D(K_m,\mathcal C_{b,V}(L^6(0,1))),\,x\in L^6(0,1).
\end{cases}
\end{equation}
It is clear that all the results of Propositions \ref{p.B.2.1}, \ref{p.B.2.9} hold for  $P_t^m$, ${t\geq0}$ and for its infinitesimal generator $(K_m,D(K_m,\mathcal C_{b,V}(L^6(0,1))))$.
 %
%

%
%
%
%
%
%
%
%
%
%
%
%
%
\subsection{The differential   $DP^m_t\varphi$}
Usually, one derives estimates on the differential
$DP_t\varphi$ directly from the estimates of 
the differential $X_x(t,x)$ of the solution $X(t,x)$.
This method cannot be applied here, by the lack of informations about $X_x(t,x)$. 
In \cite{DPD07}, it is proposed to consider a Kolmogorov operator with an additional  potential term
\[
   K_0'\varphi(x)=K_0\varphi(x)-c|x|_{4}^4\varphi(x),\quad \varphi\in \mathcal E_A(H)
\]
and the corresponding semigroup given by the  Feynman-Kac formula
\[
  S_t\varphi(x)=\EE\left[e^{-c\int_0^t|X(s,x)|_{4}^4ds} \varphi(X(t,x))  \right].
\]
By using a generalization of the Bismut-Elworthy formula (see \cite{DPZ97}) and some estimates on $X_x(t,x)$ the authors are able to get estimates on $DS(t)\varphi$.
Then, by the formula
\[
  P_t\varphi=S_t\varphi+c\int_0^tS_{t-s}\left(|\cdot|_{4}^4\varphi  \right)ds
\]
they get estimates on $DP_t\varphi$.

This method  has been successfully implemented to get solutions  for the $3D$-Navier-Stokes equation (see \cite{DPD03}, \cite{DO}).
It has been also used to get smoothing properties of the differential $DP_t\varphi$, with applications to control problems (see, for instance,    \cite{DPD00}, \cite{DPD00a}, \cite{Mancacontrol07}) 

The following result is proved in Proposition 3.6 of \cite{DPD07}.  
\begin{Proposition} \label{p.B.5}
There exists $\omega_1>0$ such that for any $m\in\Nset$, $t>0$ and $\varphi\in \mathcal C_b^1(H)$ with 
$ D\varphi \in {C_b(H;H^1(0,1))}$ we have $ DP_t^m\varphi(x)\in H^1(0,1)$ and
\[
    |  DP_t^m\varphi(x)|_{H^1(0,1)}\leq
      \left(\|D\varphi\|_{C_b(H;H^1(0,1))}+c\|\varphi\|_0\right)\left(1+|x|_{6}   \right)^8e^{\omega_1t}
\]
\end{Proposition}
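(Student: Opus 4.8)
The plan is to carry out the program sketched just before the statement. At the Galerkin level, introduce the perturbed operator $K_0'\varphi=K_0\varphi-c|\cdot|_4^4\varphi$ and the Feynman--Kac semigroup
\[
  S^m_t\varphi(x)=\EE\!\left[\exp\!\Big(\!-c\!\int_0^t|X^m(s,x)|_4^4\,ds\Big)\,\varphi(X^m(t,x))\right],\qquad\varphi\in\mathcal C_b^1(H),
\]
prove the desired $H^1(0,1)$-bound for $DS^m_t\varphi$, and then transfer it to $P^m_t$ through the Duhamel identity $P^m_t\varphi=S^m_t\varphi+c\int_0^t S^m_{t-s}(|\cdot|_4^4\varphi)\,ds$, valid because $c|\cdot|_4^4$ is a locally bounded perturbation of the generator. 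Since $X^m$ is finite dimensional, $S^m_t\varphi$ is differentiable in $x$; writing $\eta^h(t)=\eta^h(t,x):=X^m_x(t,x)h$ for the derivative process (which solves $\frac{d}{dt}\eta^h=A\eta^h+Db_m(X^m)\eta^h$, $\eta^h(0)=h$), differentiation of the Feynman--Kac formula gives
\[
  \langle DS^m_t\varphi(x),h\rangle=\EE\!\left[e^{-c\int_0^t|X^m|_4^4ds}\Big(\langle D\varphi(X^m(t)),\eta^h(t)\rangle-4c\,\varphi(X^m(t))\!\int_0^t\!\langle|X^m(s)|^2X^m(s),\eta^h(s)\rangle\,ds\Big)\right].
\]
Everything then reduces to weighted estimates on $\eta^h$ together with the $m$-uniform moment bounds of Proposition \ref{p.B.7}.

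The crux is the weighted variational estimate. An energy computation for $\eta^h$, after integrating the transport term by parts and using the symmetry of $P_m$ and $P_mA=AP_m$, yields $\frac12\frac{d}{dt}|\eta^h(t)|_2^2=-|D_\xi(P_m\eta^h)|_2^2-\int_0^1(P_mX^m(t))(P_m\eta^h)D_\xi(P_m\eta^h)\,d\xi$. By the one-dimensional Gagliardo--Nirenberg inequality $\|u\|_{L^4}\le C\|u\|_{L^2}^{3/4}\|D_\xi u\|_{L^2}^{1/4}$, the $L^p$-boundedness of $P_m$ uniformly in $m$, and Young's inequality, the last term is dominated by $\frac12|D_\xi(P_m\eta^h)|_2^2+c\,(1+|X^m(t)|_4^4)|\eta^h(t)|_2^2$ — which is precisely why the potential $|\cdot|_4^4$, with an exponent generous enough to leave room for a companion higher-order estimate, is the right choice. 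Hence $\frac{d}{dt}|\eta^h|_2^2\le C(1+|X^m|_4^4)|\eta^h|_2^2$, and multiplying by the Feynman--Kac weight and taking expectation gives $\EE\big[e^{-c\int_0^t|X^m|_4^4ds}|\eta^h(t,x)|_2^2\big]\le e^{\omega t}|h|_2^2$, uniformly in $m$ (the identity $\langle b_m(x),x\rangle=0$ keeps the Galerkin truncation from spoiling the constants). A second, higher-order estimate, controlling $\int_0^t|D_\xi\eta^h(s)|_2^2\,ds$ and the $H^1$-norm of $\eta^h(t)$ with a factor $t^{-1/2}$, is obtained in the same way and is what yields the $H^1(0,1)$-regularity of the gradient.

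To bound $|DS^m_t\varphi(x)|_{H^1(0,1)}$ one differentiates in $\xi$ as well: writing $S^m_t\varphi$ in mild form and using the smoothing estimate \eqref{e.B.13} for $e^{tA}$ together with the hypothesis $D\varphi\in C_b(H;H^1(0,1))$ and the weighted bounds on $\eta^h$, one obtains $|DS^m_t\varphi(x)|_{H^1}\le(\|D\varphi\|_{C_b(H;H^1)}+c\|\varphi\|_0)(1+|x|_6)^8e^{\omega t}$, the power $8$ arising from the worst-case combination of the Young exponents above with the $L^4$- and $L^6$-moment bounds for $X^m$. For the pieces involving $\varphi$ without derivatives (the second term in the display, and the Duhamel integral) one invokes the generalized Bismut--Elworthy formula of \cite{DPZ97}, which represents $\langle DS^m_\tau\psi(x),h\rangle$ through $\psi$ and a stochastic integral against $\eta^h$, producing a bound with a harmless factor $\tau^{-1/2}$; applied to $\psi=|\cdot|_4^4\varphi$ (whose norm in the relevant weighted space is controlled by $\|\varphi\|_0$ times a power of $|x|_6$) and integrated against $\int_0^t(t-s)^{-1/2}ds<\infty$, this gives the $c\|\varphi\|_0$ contribution. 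Since all the constants are independent of $m$, the estimate holds for every $m\in\Nset$.

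The main obstacle is the weighted variational estimate and its uniformity in the Galerkin index: one must verify that the Feynman--Kac potential $c|\cdot|_4^4$ genuinely absorbs the destabilising transport term in the equation for $\eta^h$ with constants that do not deteriorate as $m\to\infty$, and at the same time upgrade the resulting $L^2$-bounds to true $H^1(0,1)$-regularity of $DS^m_t\varphi(x)$, which requires handling the smoothing of $e^{tA}$, the $H^1$-assumption on $D\varphi$ and the precise polynomial growth in $|x|_6$ simultaneously.
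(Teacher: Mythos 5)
Your overall strategy (Feynman--Kac semigroup with potential $c|\cdot|_4^4$, Bismut--Elworthy for the zeroth-order pieces, Duhamel to return to $P^m_t$) is exactly the one the paper alludes to; note, however, that the paper itself does not prove this proposition at all --- it quotes it from Proposition 3.6 of \cite{DPD07} --- so what has to be judged is whether your reconstruction actually delivers the stated estimate. It does not, because of a duality gap at the crucial step, the $H^1(0,1)$-regularity of the gradient.

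Write $T_th=\eta^h(t)=X^m_x(t,x)h$. From $\langle DS^m_t\varphi(x),h\rangle=\EE\big[e^{-c\int_0^t|X^m|_4^4ds}\big(\langle D\varphi(X^m(t)),T_th\rangle+\dots\big)\big]$, concluding $DS^m_t\varphi(x)\in H^1(0,1)$ with the stated bound means showing $|\langle DS^m_t\varphi(x),h\rangle|\le C(x)\,|h|_{W^{-1,2}(0,1)}$, i.e.\ one needs weighted estimates of the derivative process in the \emph{negative} norm, of the type $\EE\big[e^{-c\int_0^t|X^m|_4^4ds}|\eta^h(t)|_{-1}^2\big]\le e^{\omega t}|h|_{-1}^2$ together with $\EE\big[e^{-c\int_0^t|X^m|_4^4ds}\int_0^t|\eta^h(s)|_2^2ds\big]\le C|h|_{-1}^2$ (the latter is what controls the Bismut--Elworthy stochastic integral by $|h|_{-1}$, so that also the terms containing only $\varphi$ end up in $H^1$). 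Your two estimates point the other way: the weighted bound $\EE[e^{-c\int}|\eta^h(t)|_2^2]\le e^{\omega t}|h|_2^2$ only yields $DS^m_t\varphi(x)\in L^2(0,1)$, and the proposed ``higher-order'' bound of $\eta^h$ in $H^1$ with a $t^{-1/2}$ factor gives, by duality, a bound of the adjoint of $T_t$ from $H^{-1}$ into $L^2$, not the required mapping property ($T_t:H^{-1}\to H^{-1}$, equivalently its adjoint on $H^1$); no combination of $L^2\to L^2$ and $L^2\to H^1$ bounds for $T_t$ produces it, and the hypothesis $D\varphi\in C_b(H;H^1(0,1))$ is never genuinely used in your scheme. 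The fallback of ``differentiating in $\xi$ in mild form using \eqref{e.B.13}'' conflates smoothing in the space variable of the SPDE with regularity of $DS^m_t\varphi(x)$ as an element of $H^1(0,1)$ and is not an argument. Moreover it is precisely in the $|\cdot|_{-1}$-energy computation --- where the transport term must be commuted with $(-A)^{-1/2}$ and estimated via $L^6$-norms of $X^m$ --- that the admissible Feynman--Kac potential and the precise weight $(1+|x|_6)^8$ emerge; asserting that the exponent $8$ comes ``from the worst-case combination of the Young exponents'' is not a derivation. (A minor further point: the Duhamel identity should carry $P^m_s\varphi$ inside the integral, $P^m_t\varphi=S^m_t\varphi+c\int_0^tS^m_{t-s}\big(|\cdot|_4^4\,P^m_s\varphi\big)ds$; the formula you copied from the paper's expository sketch omits it.)
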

The following two results are essential for the proof of Theorem \ref{T.B.2}.
\begin{Proposition} \label{p.B.17a}
 Take $\lambda > \omega_0,\,\omega_1$, where $\omega_0$ is as in Proposition \ref{p.B.2.1} and $\omega_1$ is as in Proposition \ref{p.B.5}. 
Let   $f\in \mathcal E_A(H)$ and,  for $m\in \Nset$ consider the function 
\[
    L^6(0,1)\to \Rset, \quad x\mapsto  \varphi(x)=\int_0^\infty e^{-\lambda t}P_t^m f(x)dt.
\]
Then
\begin{itemize}
\item[(i)] $\varphi$
is continuous, bounded and Fr\'echet differentiable in any $x\in L^6(0,1)$ with continuous differential 
$ D\varphi\in \mathcal C(L^6(0,1);H^1(0,1))$. 
Moreover, it holds
\begin{equation} \label{e.B.28a}
| D \varphi(x)|_{H^1(0,1)}\leq   
 \frac{1}{\lambda-\omega_1}\,\left(\|Df\|_{C_b(H;H^1(0,1))}+c\|f\|_0\right)\left(1+|x|_{6}   \right)^8; 
\end{equation}
\item[(ii)] $\varphi$ belongs to $D(L_V,\mathcal C_{b,V}(L^6(0,1)))\cap D(K_m,\mathcal C_{b,V}(L^6(0,1)))$ and  
\begin{equation}\label{e.B.27a}
     K_m\varphi(x)=L_V\varphi(x)-\frac12\left\langle D_\xi P_m D\varphi(x), (P_mx)^2\right\rangle,\quad\forall x\in L^6(0,1).
\end{equation}
\end{itemize}
\end{Proposition}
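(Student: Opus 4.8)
The plan is to analyze the resolvent $\varphi = \int_0^\infty e^{-\lambda t}P^m_t f\,dt$ of $K_m$ applied to an exponential function $f$, and to exploit the fact that the Galerkin drift $b_m$ is a \emph{bounded, smooth} perturbation of the Ornstein--Uhlenbeck drift. The key observation is that $P^m_t$ is the transition semigroup of $Z(t,x)$ perturbed by a regular finite-dimensional nonlinearity, so both regularity estimates (from Proposition \ref{p.B.5}) and a variation-of-constants identity relating $P^m_t$ to $R_t$ are available.

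\textbf{Part (i).} First I would establish that $\varphi \in \mathcal C_b(L^6(0,1))$: boundedness of $\varphi$ follows from $\|P^m_t f\|_{0,V} \le c_0 e^{\omega_0 t}\|f\|_{0,V}$ and $\lambda > \omega_0$ (note $f\in\mathcal E_A(H)$ is bounded, so actually $\|P^m_tf\|_0\le\|f\|_0$ and $\varphi$ is genuinely bounded), while continuity in $x\in L^6(0,1)$ comes from Remark \ref{r.B.9} applied to $P^m_t$ (the analog of Corollary \ref{c.B.8} holds uniformly in $m$ by Theorem \ref{t.B.10}) together with dominated convergence. For the differentiability and the $H^1$-estimate \eqref{e.B.28a}: since $f\in\mathcal E_A(H)$ we have $f\in\mathcal C_b^1(H)$ with $Df \in \mathcal C_b(H;H^1(0,1))$ (indeed $D(e^{i\langle\cdot,h\rangle})(x) = ie^{i\langle x,h\rangle}h$ with $h\in D(A)\subset H^1_0(0,1)$), so Proposition \ref{p.B.5} gives $DP^m_t f(x)\in H^1(0,1)$ with $|DP^m_tf(x)|_{H^1}\le(\|Df\|_{C_b(H;H^1)}+c\|f\|_0)(1+|x|_6)^8 e^{\omega_1 t}$. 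Then I would differentiate under the integral sign — justified by this uniform (in $x$ on bounded sets, locally) bound and $\lambda>\omega_1$ — to obtain $D\varphi(x) = \int_0^\infty e^{-\lambda t}DP^m_tf(x)\,dt\in H^1(0,1)$ and integrate the exponential to get \eqref{e.B.28a}. Continuity of $x\mapsto D\varphi(x)$ in $H^1$ would follow from continuity of $x\mapsto DP^m_tf(x)$ plus dominated convergence.

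\textbf{Part (ii).} The membership $\varphi\in D(K_m,\mathcal C_{b,V}(L^6(0,1)))$ with $(\lambda - K_m)\varphi = f$ is the standard resolvent identity, which holds by the analog of Proposition \ref{p.B.2.9}(iv) for $P^m_t$. The real content is \eqref{e.B.27a}. The idea is to write $X^m(t,x) = Z(t,x) + $ (correction driven by $b_m$), i.e. to compare $P^m_t$ with the OU semigroup $R_t$ via the variation-of-constants formula
\[
  P^m_t f(x) = R_t f(x) + \int_0^t R_{t-s}\big(\langle b_m(\cdot), D(P^m_s f)(\cdot)\rangle\big)(x)\,ds,
\]
valid for $f$ smooth enough (this is where the finite-dimensionality and smoothness of $b_m$ and the regularity from (i)/Proposition \ref{p.B.5} are used — each $P^m_s f$ is $C_b^1$ with $H^1$-valued differential, so $\langle b_m(x), DP^m_sf(x)\rangle$ makes sense and lies in the right space). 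Taking Laplace transforms in $t$, using Fubini and the resolvent $R(\lambda,L_V)$ of the OU semigroup, one gets $\varphi = R(\lambda,L_V)f + R(\lambda,L_V)\big(\langle b_m(\cdot), D\varphi(\cdot)\rangle\big)$, hence $(\lambda - L_V)\varphi = f + \langle b_m(\cdot),D\varphi(\cdot)\rangle$. Combining with $(\lambda - K_m)\varphi = f$ yields $K_m\varphi = L_V\varphi - \langle b_m(\cdot), D\varphi(\cdot)\rangle$, and since $b_m(x) = P_m\big(\tfrac12 D_\xi((P_m x)^2)\big)$ one has $\langle b_m(x), D\varphi(x)\rangle = \tfrac12\langle D_\xi((P_mx)^2), P_m D\varphi(x)\rangle = -\tfrac12\langle D_\xi P_m D\varphi(x), (P_mx)^2\rangle$ after integrating by parts (the boundary terms vanish since $P_m x, P_m D\varphi(x)\in H^1_0$), which is precisely \eqref{e.B.27a}. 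One must also separately check $\varphi\in D(L_V,\mathcal C_{b,V}(L^6(0,1)))$: this follows because $R(\lambda,L_V)$ maps into $D(L_V,\cdot)$ and both terms on the right of $\varphi = R(\lambda,L_V)(f + \langle b_m,D\varphi\rangle)$ are in $\mathcal C_{b,V}(L^6(0,1))$ — here I'd use \eqref{e.B.28a} and $V(x)=|x|_6^8|x|_4^2$ to see $x\mapsto\langle b_m(x),D\varphi(x)\rangle$ has the required growth (it is bounded by $c(1+|x|_6)^8\cdot|P_m x|^2$, controlled by $1+V$).

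\textbf{Main obstacle.} The delicate point is rigorously justifying the variation-of-constants identity and its Laplace transform in the weighted, non-strongly-continuous setting — in particular interchanging $\int_0^\infty e^{-\lambda t}\,dt$ with the $\int_0^t R_{t-s}(\cdots)\,ds$ and with the (only $\pi$-continuous) action of $R_{t-s}$, and controlling $\langle b_m(\cdot),DP^m_sf(\cdot)\rangle$ in $\mathcal C_{b,V}$ uniformly enough in $s$ to apply Fubini against $e^{-\lambda s}$. The growth bound \eqref{e.B.28a} with the exponent $8$ matching the exponent in $V$ is exactly what makes this integrable after pairing with $(P_m x)^2$, so the bookkeeping of powers is the crux; everything else is routine once the identity is in place.
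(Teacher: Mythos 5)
Your part (i) is essentially the paper's own argument (the bound of Proposition \ref{p.B.5} applied to $P^m_tf$, differentiation under the Laplace integral, and dominated convergence for continuity of $D\varphi$), and your growth bookkeeping $(1+|x|_6)^8|P_mx|_4^2\lesssim 1+V(x)$ is exactly what the paper uses. For part (ii), however, you take a genuinely different route, and it has a real gap. The paper does not use a Duhamel formula at all: it proves \eqref{e.B.27a} by computing $\lim_{t\to0^+}t^{-1}\bigl(R_t\varphi(x)-\varphi(x)\bigr)$ directly, writing $X^m(t)=Z(t)+\tfrac12\int_0^te^{(t-s)A}P_mD_\xi(P_mX^m(s))^2\,ds$ and Taylor-expanding $\varphi$ (only to first order, which is all that (i) provides), then passing to the limit via $W^{-1,2}(0,1)$ estimates and a dominating random variable $g(x)$ with $\EE[g(x)]\le C(1+|x|_6^8|x|_4^2)$; this single domination argument yields both the pointwise limit and the $\sup_{t\in(0,1)}$ bound needed for membership in $D(L_V,\mathcal C_{b,V}(L^6(0,1)))$.

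The gap in your plan is the identity $P^m_tf=R_tf+\int_0^tR_{t-s}\bigl(\langle b_m(\cdot),DP^m_sf(\cdot)\rangle\bigr)\,ds$, which you assert and whose justification you defer as "the main obstacle". That identity is not available anywhere in the paper and is not obtainable by routine means here: formally it comes from differentiating $s\mapsto R_{t-s}P^m_sf$, i.e. from the relation $K_m=L_V+\langle b_m,D\cdot\rangle$ applied to $P^m_sf$ — essentially the statement being proved — and a rigorous derivation (e.g. It\^o's formula for $P^m_{t-s}f(Z(s))$) requires second-order spatial regularity of $P^m_sf$ and pointwise validity of its Kolmogorov equation, none of which follows from Proposition \ref{p.B.5}, which gives only first-order $H^1$ bounds. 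The "reversed" formula $P^m_tf=R_tf+\int_0^tP^m_s\bigl(\langle b_m,DR_{t-s}f\rangle\bigr)\,ds$, which would only need the explicit smoothness of $R_{t-s}f$ for exponential $f$, Laplace-transforms into an identity involving $R(\lambda,L)f$ rather than $\varphi$ and does not give \eqref{e.B.27a} without further work. So the step you label as the obstacle is in fact the entire content of the proposition; everything you call routine (resolvent algebra, Fubini against $e^{-\lambda t}$, integration by parts) is indeed routine. One more slip: from $(\lambda-L_V)\varphi=f+\langle b_m,D\varphi\rangle$ and $(\lambda-K_m)\varphi=f$ one gets $K_m\varphi=L_V\varphi+\langle b_m,D\varphi\rangle$, which combined with $\langle b_m(x),D\varphi(x)\rangle=-\tfrac12\langle D_\xi P_mD\varphi(x),(P_mx)^2\rangle$ gives \eqref{e.B.27a}; as you wrote it ($K_m\varphi=L_V\varphi-\langle b_m,D\varphi\rangle$) the substitution would produce the opposite sign.
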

\begin{proof}
Notice that the mild solution of \eqref{e5.11} is defined for any $x\in L^2(0,1)=H$ (cfr. \cite{DPDT}).
Then, the transition semigroup $P_t$ can be defined in $\mathcal C_b(H)$. 
So, since $f\in C_b(H)$, it follows $\varphi\in \mathcal C_b(H)$. 
By Proposition \eqref{p.B.5} we have 
\begin{eqnarray*}
  | D\varphi(x)|_{H^1(0,1)} &\leq& \int_0^\infty e^{-\lambda t} |DP_tf(x)|_{H^1(0,1)}   dt 
\\
   &\leq& \int_0^\infty e^{-(\lambda-\omega_1) t}   dt
      \left( \|Df\|_{C_b(H;H^1(0,1))}+c\|f\|_0\right)\left(1+|x|_{6}   \right)^8
\end{eqnarray*}
and \eqref{e.B.28a} follows.
Still by \eqref{e.B.28a} we get $ D\varphi\in \mathcal C(L^6(0,1);H^1(0,1))$.
Indeed, for any $x,h\in L^6(0,1)$,
\[
   |  D\varphi(x+h)-D_\xi D\varphi(x)|_{H^1(0,1)}
\]
\[
   \leq  
   \frac{1}{\lambda-\omega_1}\,\left(\|Df(\cdot+h)-Df(\cdot) \|_{C_b(H;H^1(0,1))}+c\|f(\cdot+h)-f(\cdot)\|_0\right) \left(1+|x|_{6}   \right)^8
\]
Since $f\in C_b(H)$, and  $ Df\in C_b(H;H^1(0,1))$, by uniform continuity it follows $|  D\varphi(x+h)-  D\varphi(x)|_{H^1(0,1)}\to 0$ as $|h|_6\to 0$.
This concludes the proof of (i).

Let us prove (ii).
%
%
%
%
%
%
%
Since the semigroup $P_t^m$, ${t\geq0}$ satisfies the statements of Proposition \ref{p.B.2.1}, it follows that its infinitesimal generator $ K_m$   enjoys the statements of Proposition \ref{p.B.2.9}.
In particular, we have $\varphi=R(\lambda,K_m)f$ and therefore  $\varphi\in D(K_m,\mathcal C_{b,V}(L^6(0,1)))$.
Then we have to show that $\varphi\in D(L_V,\mathcal C_{b,V}(L^6(0,1)))$. 
Now let $R_t$, ${t\geq0}$ be the OU semigroup \eqref{e.B.23a} and let $(L_V,D(L_V,\mathcal C_{b,V}(L^6(0,1))))$ be its infinitesimal generator in $\mathcal C_{b,V}(L^6(0,1))$.
Fix $x\in L^6(0,1)$, $T>0$ and for $t\in [0,T]$ set $X^m(t)=X^m(t,x)$, $Z(t)=Z(t,x)$.
By \eqref{e.B.16}, \eqref{e.B.3.1} we have 
\[
  X^m(t)=Z(t)+\frac{1}{2}\int_{0}^{t}e^{(t-s)A}P_m D_\xi (P_mX^m(s) )^2 ds 
\] 
and consequently 
\[
   P_t^m\varphi(x)=\EE\left[\varphi(X^m(t))  \right]=\EE\left[\varphi(Z(t)+\frac{1}{2}\int_{0}^{t}e^{(t-s)A}P_m D_\xi (P_mX^m(s) )^2 ds)  \right].
\]
Notice that since $f\in C_b^1(H)$, by \eqref{e.B.28a} we get that   the function $L^6(0,1)\to \Rset$, 
$x\mapsto  D\varphi(x)$ is  continuous. 
Then, by Taylor formula we have
\[
  R_t\varphi(x)-\varphi(x)= P_t^m\varphi(x)-\varphi(x)
\]
\begin{equation} \label{e.B.27}
+\frac{1}{2}\EE\left[\int_0^1\Big\langle D\varphi(\xi X^m(t)+(1-\xi)Z(t)),
    \int_{0}^{t} e^{(t-s)A}P_m D_\xi  (P_m X^m(s) )^2 ds\Big\rangle d\xi\right]
\end{equation}
We claim that 
\[
\lim_{t\to 0^+}\frac1t\EE\left[\int_0^1\Big\langle D\varphi(\xi X^m(t)+(1-\xi)Z(t)),\int_{0}^{t} e^{(t-s)A}P_m D_\xi  (P_m X^m(s) )^2 ds\Big\rangle d\xi\right]
\]
\begin{equation} \label{e.B.27d}
   = - \langle D_\xi  P_m D\varphi(x),    (P_mx)^2 \rangle
\end{equation}
holds.
By Theorem \ref{t.B.7.1}, for any $T>0$ we can write 
\[
\begin{split}
   X(t)&=x+\theta_1(t)\\
   Z(t)&=x+\theta_2(t),\quad t\in[0,T]
\end{split}
\]
 where $\theta_1(t),\,\theta_2(t):\Omega\to H,\,t\in[0,T]$ are random variables such that $\theta_1,\theta_2\in C([0,T];H)$ $\PP$-a.s. and $ \theta_1(0)=\theta_2(0)=0$.
On the other hand, by Proposition \ref{p.B.17a} we can write 
\[
   D\varphi(x+z)= D\varphi(x)+\eta(z),\quad z\in H
\]  
where $\eta\in \mathcal C(H,H^1(0,1))$ and $\eta(0)=0$.
With these notations we have
\begin{eqnarray*}
    D\varphi(\xi X^m(t)+(1-\xi)Z(t))&=& D\varphi(x+\xi \theta_1(t)+(1-\xi)\theta_2(t))
\\
   &=&D\varphi(x)+\eta(\xi \theta_1(t)+(1-\xi)\theta_2(t)).
\end{eqnarray*}
Then
\[
 \lim_{t\to 0^+} \sup_{\xi\in [0,1]} |D\varphi(\xi X^m(t)+(1-\xi)Z(t))-D\varphi(x)|_{H^1(0,1)}=
\] 
\begin{equation}  \label{e.B.26a}
  = \lim_{t\to 0^+}\sup_{\xi\in [0,1]} \big|\eta\big(\xi \theta_1(t)+(1-\xi)\theta_2(t)\big)\big|_{H^1(0,1)}=0.
\end{equation}
For any $t>0$
we have
\begin{eqnarray}
    && \left|\frac1t\int_{0}^{t} e^{(t-s)A}P_m D_\xi  (P_m X^m(s) )^2 ds-P_m D_\xi((P_mx)^2)  \right|_{W^{-1,2}(0,1)}
     \notag
\\
 &&\qquad \leq \frac1t\int_{0}^{t}\left| e^{(t-s)A}P_m D_\xi \left( (P_m X^m(s) )^2  -(P_mx)^2\right)\right|_{W^{-1,2}(0,1)}ds
       \notag
\\
 \label{e.B.27b}
  &&\qquad\quad + \frac1t\int_{0}^{t}\left| e^{(t-s)A} P_m D_\xi (P_mx )^2  -P_m D_\xi (P_mx)^2\right|_{W^{-1,2}(0,1)}ds.
\end{eqnarray}
The first term on the right-hand side is bounded by 
\begin{eqnarray*}
 && \frac1t\int_{0}^{t}\left| P_m D_\xi \left( (P_m X^m(s) )^2  -(P_mx)^2\right)\right|_{W^{-1,2}(0,1)}ds
\\
 &&\qquad  \leq\frac1t\int_{0}^{t}\left|   (P_m X^m(s) )^2  -(P_mx)^2 \right|_2ds
\\
 &&\qquad  \leq \frac1t\int_{0}^{t}\left|   X^m(s)    - x  \right|_2\left|   X^m(s)    + x  \right|_2ds.
\end{eqnarray*}
Since $X^m\in C([0,T];H)$ $\PP$-a.s., it follows
\[
  \lim_{t\to0^+} \frac1t\int_{0}^{t}\left| e^{(t-s)A}P_m D_\xi \left( (P_m X^m(s) )^2  -(P_mx)^2\right)\right|_{W^{-1,2}(0,1)}ds=0,\quad \PP\text{-a.s.}
\]
Since the semigroup $e^{tA},\,t\geq0$ can be est ended to a strongly continuous semigroup in  $W^{-1,2}(0,1)$, for the last term of \eqref{e.B.27b} it holds
\[
 \lim_{t\to0^+}  \frac1t\int_{0}^{t}\left| e^{(t-s)A} P_m D_\xi (P_mx )^2 
  -P_m D_\xi (P_mx)^2\right|_{W^{-1,2}(0,1)}ds=0.
\]
Hence, by \eqref{e.B.27b} we have
\[
  \lim_{t\to0^+} \left|\frac1t\int_{0}^{t} e^{(t-s)A}P_m D_\xi  (P_m X^m(s) )^2 ds-P_m D_\xi(P_mx)^2 \right|_{W^{-1,2}(0,1)}=0,\quad \PP\text{-a.s.}
\]
This, together with \eqref{e.B.27a} and an integration by parts, implies
\[
 \lim_{t\to0^+}  \frac1t \int_0^1\Big\langle D\varphi(\xi X^m(t)+(1-\xi)Z(t)),\int_{0}^{t} e^{(t-s)A}P_m D_\xi  (P_m X^m(s) )^2 ds\Big\rangle d\xi 
\]
\begin{equation} \label{e.B.30a}
   = \langle D\varphi(x), P_m D_\xi  (P_mx)^2 \rangle= -\langle D_\xi P_m D\varphi(x),   (P_mx)^2 \rangle,\quad \PP\text{-a.s.}
\end{equation}
In order to obtain \eqref{e.B.26a}, it is sufficient to show that the terms in the above limit are dominated by an integrable random variable.
Indeed, for any $t\in (0,T]$ we have
\begin{eqnarray*}
 &&\frac1t \int_0^1\Big\langle D\varphi(\xi X^m(t)+(1-\xi)Z(t)),\int_{0}^{t} e^{(t-s)A}P_m D_\xi  (P_m X^m(s) )^2 ds\Big\rangle d\xi 
\\
  &&\qquad\qquad\leq \frac1t\left|\int_0^1D\varphi(\xi X^m(t)+(1-\xi)Z(t))d\xi\right|_{H^1(0,1)}
\\
 &&\hspace{100pt}\qquad\times \left|\int_{0}^{t} e^{(t-s)A}P_m D_\xi  (P_m X^m(s) )^2 ds\right|_{W^{-1,2}(0,1)}
\\
  &&\qquad\qquad\leq\int_0^1\left|D\varphi(\xi X^m(t)+(1-\xi)Z(t))\right|_{H^1(0,1)}d\xi 
\\
 &&\hspace{100pt}\qquad \times
    \frac1t\int_{0}^{t} \left|e^{(t-s)A}P_m D_\xi  (P_m X^m(s) )^2 \right|_{W^{-1,2}(0,1)}ds
\\
   &&\qquad\qquad\leq I_1(t)\times I_2(t).
\end{eqnarray*}
Set
\[
   C_\varphi=\left(\frac{ \|Df\|_{C_b(H;H^1(0,1))}+c\|f\|_0}{\lambda-\omega_1}\right).
\]
By \eqref{e.B.28a} we have  
\begin{eqnarray*}
  && \int_0^1| D\varphi(\xi X^m(t,x)+(1-\xi)Z(t,x))|_{H^1(0,1)}d\xi
\\
 && \qquad  \leq  C_\varphi \,
      \left(1+    |\xi X^m(t,x)+(1-\xi)Z(t,x) |_{6}^{8}   \right)
\\
 && \qquad\leq  C_\varphi\,
    \int_0^1 \left(1+ \xi   |X^m(t,x)|_{6}^{8}  +(1-\xi)   |Z(t,x)|_{6}^{8}  \right) d\xi
\\
 && \qquad \leq  C_\varphi     
      \left(1+ \sup_{t\in[0,T]}   |X^m(t,x)|_{6}^{8}  +\sup_{t\in[0,T]}    |Z(t,x)|_{6}^{8}  \right).
\end{eqnarray*}
Here we have used the convexity of the function $z\to |z|_{6}^{8}$. 
For $I_2(t)$ we have
\begin{eqnarray*}
 I_2(t)&\leq& \frac ct\int_{0}^{t} \left|  (P_m X^m(s) )^2 \right|_2ds
\\
   &\leq& \frac ct\int_{0}^{t} \left|  X^m(s)  \right|_4^2 ds \leq c\sup_{t\in [0,T]}\left|  X^m(t)  \right|_4^2
\end{eqnarray*}
\bigskip
Then, for any $t\in (0,T]$ we have
\[
\frac1t\left|\int_0^1\Big\langle D\varphi(\xi X^m(t)+(1-\xi)Z(t)),\int_{0}^{t} e^{(t-s)A}P_m D_\xi  (P_m X^m(s) )^2 ds\Big\rangle d\xi\right|
\]
\begin{equation} \label{e.B.31a}
  \leq        
     c C_\varphi \left(1+ \sup_{t\in[0,T]}   |X^m(t,x)|_{6}^{8}  +\sup_{t\in[0,T]}    |Z(t,x)|_{6}^{8}  \right)\left(  \sup_{t\in [0,T]}\left|  X^m(t)  \right|_4^2 \right) 
\end{equation}
Notice  that by Propositions \ref{p.B.7},   \eqref{p.B.15}   the random variable
\begin{equation}\label{e.B.32a}
   g(x):= c C_\varphi \left(1+ \sup_{t\in[0,T]}   |X^m(t,x)|_{6}^{8}  +\sup_{t\in[0,T]}    |Z(t,x)|_{6}^{8}  \right)\left(  \sup_{t\in [0,T]}\left|  X^m(t)  \right|_4^2 \right)
\end{equation}
belongs to $L^1(\Omega,\PP)$ and 
\begin{equation}\label{e.B.33a}
  \EE[g(x)]\leq C\left(1+  |x|_{6}^{8}|x|_{4}^{2}\right)
\end{equation}
for some $C>0$.
Consequently, since for any $t\in (0,T]$
\[
\frac1t\left|\int_0^1\Big\langle D\varphi(\xi X^m(t)+(1-\xi)Z(t)),\int_{0}^{t} e^{(t-s)A}P_m D_\xi  (P_m X^m(s) )^2 ds\Big\rangle d\xi\right| \leq g(x),
\]
  by the dominated convergence theorem and by \eqref{e.B.30a} follows \eqref{e.B.27d} as claimed. 

By \eqref{e.B.27},   \eqref{e.B.27d} and by the fact that $\varphi\in D(K_m,\mathcal C_{b,V}(L^6(0,1)))$ we obtain
\[
   \lim_{t\to 0^+} \frac{R_t\varphi(x)-\varphi(x)}{t}=K_m\varphi+\frac12\left\langle D_\xi P_m D\varphi(x), (P_mx)^2\right\rangle,\quad \forall x\in L^6(0,1).
\] 
Now, by \eqref{e.B.31a}, \eqref{e.B.32a}, \eqref{e.B.33a} we have
 \begin{eqnarray*}
  \sup_{t\in(0,T]} \left|\frac{R_t\varphi(x)-\varphi(x)}{t}\right|   %
   &\leq&  \sup_{t\in(0,T]}\left|\frac{P_t^m\varphi(x)-\varphi(x)}{t}\right|+\EE[g(x)]
\\
   &\leq& c(1+V(x))
\end{eqnarray*}
since $\varphi\in D(K_m,\mathcal C_{b,V}(L^6(0,1))$. 
This  implies $\varphi\in D(L_V,\mathcal C_{b,V}(L^6(0,1)) ) $ and \eqref{e.B.27a} follows.
\end{proof}
\begin{Proposition} \label{p.B.17aa}
Fix $m\in \Nset$, $f\in \mathcal E_A(H)$ and let $\varphi$ be as in Proposition \ref{p.B.17a}. 
Then, there exist $k\in\Nset$ and   a  $k$-indexed sequence $(\varphi_{n_1,\ldots,n_k})_{n_1\in\Nset,\ldots,n_k\in\Nset}\subset \mathcal E_A(H)$ such that
\begin{equation} \label{e.B.30}
   \lim_{n_1\to \infty}\cdots\lim_{n_k\to \infty}  \frac{ \varphi_{n_1,\ldots,n_k}}{1+V}
       \stackrel{\pi}{=}\frac{  \varphi}{1+V}
\end{equation}
 \begin{equation}\label{e.B.31}
    \lim_{n_1\to \infty}\cdots\lim_{n_k\to \infty}  \frac{L_0\varphi_{n_1,\ldots,n_k}}{1+V}
       \stackrel{\pi}{=}\frac{  L\varphi}{1+V}
 \end{equation}
and, for any  $h\in H$
 \begin{equation}\label{e.B.32}
   \lim_{n_1\to \infty}\cdots\lim_{n_k\to \infty}  
       \frac{\langle  D_\xi D \varphi_{n_1,\ldots,n_k},h\rangle}{\left(1+   |\cdot|_{6}^{8}\right)}  
     \stackrel{\pi}{=} 
     \frac{\langle  D_\xi D \varphi ,h\rangle}{\left(1+   |\cdot|_{6}^{8}\right)} .
 \end{equation}
\end{Proposition}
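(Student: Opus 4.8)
The plan is to deduce Proposition \ref{p.B.17aa} from the analogous core-approximation result for the Ornstein--Uhlenbeck generator, namely Proposition \ref{p.L.4.3}, applied to $\varphi = R(\lambda,L_V)f$ (equivalently, $\varphi$ as in Proposition \ref{p.B.17a}), which by part (ii) of Proposition \ref{p.B.17a} we already know belongs to $D(L_V,\mathcal C_{b,V}(L^6(0,1)))$ and, by part (i), additionally lies in $\mathcal C_b^1(H)$ with $D\varphi \in \mathcal C(L^6(0,1);H^1(0,1))$ and the explicit bound \eqref{e.B.28a}. First I would invoke Proposition \ref{p.L.4.3} in its ``$C_b^1$'' form: since $\varphi\in D(L,\mathcal C_{b,1}(H))\cap C_b^1(H)$ (one must check $\varphi$ actually lies in $\mathcal C_{b,1}(H)$, which follows since $f\in C_b(H)$ and hence $\varphi\in C_b(H)\subset\mathcal C_{b,1}(H)$, and note $D(L,\mathcal C_{b,1}(H))\subset D(L_V,\mathcal C_{b,V}(L^6(0,1)))$), there exist $k\in\Nset$ and a $k$-indexed sequence $(\varphi_{n_1,\ldots,n_k})\subset \mathcal E_A(H)$ with the $\pi$-convergences \eqref{e.L.45}, \eqref{e.L.46}, \eqref{e.L.47} holding with weight $1+|\cdot|_2$. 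The task is then to upgrade these three convergences from the $\mathcal C_{b,1}(H)$ setting (weight $1+|\cdot|_2$, pointwise convergence on $H$, uniform sup-bound in the $\|\cdot\|_{0,1}$ norm) to the statements \eqref{e.B.30}, \eqref{e.B.31}, \eqref{e.B.32} (weights $1+V$ and $1+|\cdot|_6^8$, pointwise convergence on $L^6(0,1)$, uniform sup-bound in the respective weighted norms).

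For \eqref{e.B.30} and \eqref{e.B.31} the upgrade is essentially automatic: pointwise convergence at each $x\in L^6(0,1)\subset H$ is inherited directly; and since $1+|\cdot|_2 \le C(1+V)$ on $L^6(0,1)$ (because $|x|_2\le C|x|_6$ and $V$ dominates a high power of $|x|_6$), a uniform bound $\sup\|(1+|\cdot|_2)^{-1}\psi_n\|_0<\infty$ implies $\sup\|(1+V)^{-1}\psi_n\|_{0,V}<\infty$; so each $\pi$-limit with the $1+|\cdot|_2$ weight is also a $\pi$-limit with the $1+V$ weight, applied to the functions $\varphi_{n_1,\ldots,n_k}$ and $L_0\varphi_{n_1,\ldots,n_k}$ respectively, and $L\varphi = L_V\varphi$ on $L^6(0,1)$ by Proposition \ref{p.B.6.4}. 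The genuinely new ingredient is \eqref{e.B.32}, the convergence of the \emph{second} differentials $\langle D_\xi D\varphi_{n_1,\ldots,n_k},h\rangle$ with weight $1+|\cdot|_6^8$. Here I would go back into the construction behind Proposition \ref{p.L.4.3}: the approximating sequence there is built so that not only $\varphi_{n}$ and $L_0\varphi_n$ but also $D\varphi_n$ converge $\pi$-ly (that is \eqref{e.L.47}); for exponential functions of the form $e^{i\langle x,a\rangle}$ one has $D_\xi D e^{i\langle x,a\rangle} = -a\, D_\xi a\, e^{i\langle x,a\rangle}$ (up to the obvious real/imaginary bookkeeping), so the second-order object is controlled by the same data as the first-order one provided $a\in D(A)$, which is exactly the definition of $\mathcal E_A(H)$. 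Thus one re-examines the explicit Galerkin-type approximation used to prove \eqref{e.L.45}--\eqref{e.L.47} and checks that, in addition, the quantities $\langle D_\xi D\varphi_{n_1,\ldots,n_k},h\rangle$ converge pointwise on $L^6(0,1)$ to $\langle D_\xi D\varphi,h\rangle$ and stay uniformly bounded after division by $1+|\cdot|_6^8$ — the bound \eqref{e.B.28a} gives precisely the $\left(1+|x|_6\right)^8$ growth that matches this weight, and the construction's uniformity transfers because the OU resolvent identity $(\lambda - L)\varphi = f$ together with formula \eqref{e.B.diff} expresses $D\varphi$ and $D_\xi D\varphi$ as OU-semigroup averages of $Df$ and $D_\xi Df$, which are bounded since $f\in\mathcal E_A(H)$.

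The main obstacle, then, is not any single hard estimate but the bookkeeping needed to show that the specific approximating sequence furnished by Proposition \ref{p.L.4.3} can be taken to satisfy \emph{all three} of \eqref{e.B.30}, \eqref{e.B.31}, \eqref{e.B.32} simultaneously — i.e. that the extra second-derivative convergence does not force a different, incompatible choice of sequence. I expect this to be handled exactly as in the cited references \cite{Manca07a}: one approximates $\varphi = R(\lambda, L_V)f$ by Riemann sums of the resolvent integral $\int_0^\infty e^{-\lambda t}R_t f\,dt$ composed with finite-dimensional projections $P_n$, uses the stability $R_t:\mathcal E_A(H)\to\mathcal E_A(H)$ from \eqref{e.OU.35} and the commutation \eqref{e.B.diff} to keep all iterated limits inside $\mathcal E_A(H)$, and invokes Proposition \ref{p.B.5} / estimate \eqref{e.B.28a} uniformly along the approximation to get the required weighted sup-bounds. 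I would therefore present the proof as: (1) record that $\varphi$ satisfies the hypotheses of Proposition \ref{p.L.4.3} in its $C_b^1$ form; (2) obtain the sequence with \eqref{e.L.45}--\eqref{e.L.47}; (3) note the weight comparison $1+|\cdot|_2\le C(1+V)$ to get \eqref{e.B.30}, \eqref{e.B.31}; (4) revisit the construction to see that the second-order convergence \eqref{e.B.32} holds with weight $1+|\cdot|_6^8$, using \eqref{e.B.28a} for the uniform bound and the fact that $h\in H$ acts linearly, referring to \cite{Manca07a} for the routine details of the Galerkin/Riemann-sum scheme.
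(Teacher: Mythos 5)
There is a genuine gap at the very first step: you apply Proposition \ref{p.L.4.3} (in its $C_b^1$ form) directly to $\varphi$, but $\varphi$ does not satisfy its hypotheses. By Proposition \ref{p.B.17a}, $\varphi$ is Fr\'echet differentiable only at points of $L^6(0,1)$, and the bound \eqref{e.B.28a} gives $|D\varphi(x)|_{H^1(0,1)}\leq C(1+|x|_6)^8$, which is unbounded; so $\varphi\notin \mathcal C_b^1(H)$ in the paper's sense (bounded differential on all of $H$). Worse, $\varphi\notin D(L,\mathcal C_{b,1}(H))$: from \eqref{e.B.27a}, $L_V\varphi=K_m\varphi+\frac12\langle D_\xi P_mD\varphi,(P_m\cdot)^2\rangle$ has growth of order $V$, so the difference quotients $(R_t\varphi-\varphi)/t$ cannot be bounded by $c(1+|x|_2)$ and the $\mathcal C_{b,1}$-generator is simply not defined at $\varphi$. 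This also destroys your ``automatic upgrade'' for \eqref{e.B.31}: a $\pi$-limit $L_0\varphi_{n_1,\ldots,n_k}\to L\varphi$ with weight $1+|\cdot|_2$ would force $L\varphi$ to have linear growth in $|x|_2$, which is false; in the paper the convergence of the generators is necessarily established directly with the weight $1+V$, not inherited from the $\mathcal C_{b,1}$ setting. Finally, for \eqref{e.B.32} you defer the actual mechanism to ``routine details'', but this is where the work lies, and the inheritance you hope for (second-order convergence coming for free from \eqref{e.L.47}) is not available for the sequence you would obtain.

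What the paper does instead is precisely designed to repair these points. It first introduces the cutoff/regularized functions $\psi_p(x)=\bigl(1+p^{-1}|e^{\frac1pA}x|_6^8\bigr)^{-1}\varphi(e^{\frac1pA}x)$, which genuinely belong to $\mathcal C_b^1(H)$ because $e^{\frac1pA}$ maps $H$ into $L^6(0,1)$ and the denominator tames the $(1+|x|_6)^8$ growth of $D\varphi$; then it time-averages, $\psi_{n_2,n_3}=n_2\int_0^{1/n_2}R_t\psi_{n_3}\,dt$, to land in $D(L,\mathcal C_{b,1}(H))\cap\mathcal C_b^1(H)$, and only to these functions is Proposition \ref{p.L.4.3} applied. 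The final approximants are $\varphi_{n_1,n_2,n_3,n_4}=R_{\frac1{n_1}}\psi_{n_2,n_3,n_4}$ (a four-index sequence), and the convergence \eqref{e.B.32} is obtained not from any second-derivative control of the $\psi$'s but from the identity $\langle D\varphi_{n_1,\ldots}(x),D_\xi h\rangle=-R_{\frac1{n_1}}\bigl(\langle D_\xi e^{\frac1{n_1}A}D\psi_{\ldots},h\rangle\bigr)(x)$ (formula \eqref{e.B.diff} plus integration by parts), together with the smoothing bound $|D_\xi e^{tA}z|_2\leq ct^{-1/2}|z|_2$, which makes $D_\xi e^{\frac1{n_1}A}$ a bounded operator on $H$ and converts first-derivative convergence of the inner approximants into the required convergence of $\langle D_\xi D\varphi_{n_1,\ldots},h\rangle$ with weight $1+|\cdot|_6^8$. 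Your proposal, as written, omits both the cutoff step and this smoothing/integration-by-parts device, and without them the argument does not go through.
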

\begin{proof} 
Set
\[
  \psi_{p}(x)= \left(1+p^{-1}|e^{\frac1pA}x|_{6}^8\right)^{-1} \varphi(e^{\frac1pA}x) ,\quad x\in H,\,p\in\Nset.
\]
Clearly, 
\begin{equation} \label{e.B.33}
   \lim_{p\to\infty} \frac{   \psi_{p} }{1+|\cdot|_{6}^8} 
    \stackrel{\pi}{=}\frac{    \varphi  }{1+|\cdot|_{6}^8}.
\end{equation}
By the well known properties of the heat semigroup, $e^{\frac1pA}x\in L^6(0,1) $, for any $x\in H$.
Then, since by Proposition \ref{p.B.17a} we have $\varphi\in \mathcal C_b(L^6(0,1))$, it follows that $\psi_p:H\to \Rset$ is bounded.
Moreover, an easy computation show that $\psi_p$ is   continuous.
Then, $\psi_p\in\mathcal C_b(H)$.
A standard computation show
\begin{equation} \label{e.B.36a}
   \langle D  \psi_{p}(x),  h\rangle=
     \frac{\langle D\varphi(e^{\frac1pA}x),   e^{\frac1pA}h\rangle}{1+p^{-1}|e^{\frac1pA}x|_{6}^8}
      - \frac{ 8   \varphi(e^{\frac1pA}x) |e^{\frac1pA}x|_{6}^7\langle (e^{\frac1pA}x)^5,  e^{\frac1pA}h\rangle}
            {p \left(1+p^{-1}|e^{\frac1pA}x|_{6}^8\right)^2},
\end{equation}
$x,h\in H$.
Hence, by taking into account \eqref{e.B.28a}, there exists $c_f>0$, depending on $f$, such that for any $x\in L^6(0,1)$ we have  
\begin{eqnarray*}
   |\langle D  \psi_{p}(x),  h\rangle|  &\leq& 
  \frac{| D\varphi(e^{\frac1pA}x)|_2 |e^{\frac1pA}h|_2}{1+p^{-1}|e^{\frac1pA}x|_{6}^8}+     \frac{8\|\varphi\|_0|e^{\frac1pA}x|_{6}^7|e^{\frac1pA}x|_{6}^3|e^{\frac1pA}h|_{L^{6/5}}}
  {p \left(1+p^{-1}|e^{\frac1pA}x|_{6}^8\right)^2}  
\\
 &\leq&  \left( \frac{c_f(1+|e^{\frac1pA}x|_{6}^8)}
                     {\left(1+p^{-1}|e^{\frac1pA}x|_{6}^8\right)(\lambda-\omega_1)} 
    +     \frac{2 \|\varphi\|_0|e^{\frac1pA}x|_{6}^{10}}{ p\left(1+p^{-1}|e^{\frac1pA}x|_{6}^8\right)^2} \right)|e^{\frac1pA}h|_2\\
 &\leq& \left(p \frac{c_f}{\lambda-\omega_1}+2 \|\varphi\|_0\right)|h|_2.
\end{eqnarray*}
Then, $ \psi_p$ is Fr\'echet differentiable in any $x\in H$ and its differential is bounded. 
An easy but tedious computation shows that $D\psi_p:H\to \mathcal L(H)$ is  continuous.  
Therefore, $\psi_p\in \mathcal C_b^1(H)$.
In addition, as easily checked, by formula \eqref{e.B.36a} and by \eqref{e.B.28a} (which gives the bound for $D\varphi$) it follows 
\begin{equation} \label{e.B.34}
   \lim_{p\to\infty} \frac{\langle D  \psi_{p} ,  h\rangle}{1+|\cdot|_{6}^8} 
    \stackrel{\pi}{=}\frac{ \langle D  \varphi,  h\rangle}{1+|\cdot|_{6}^8},\quad \forall h\in H.
\end{equation}
For any $n_2,n_3\in \Nset$, consider the function
\[
  \psi_{n_2,n_3}: H\to \Rset,\quad x\mapsto \psi_{n_2,n_3}(x)={n_2}\int_0^{\frac{1}{n_2}}R_t \psi_{n_3}(x) dt.
\]
By (vi) of Proposition \ref{p.L.2.9} we have  $\psi_{n_2,n_3}\in D(L,\mathcal C_{b,1}(H))$ and by \eqref{e.B.diff} we have $\psi_{n_2,n_3}\in C^1_b(H)$. 
Then
\[
    \psi_{n_2,n_3}\in D(L,\mathcal C_{b,1}(H))\cap\mathcal  C_b^1(H),\quad n_2,n_3\in \Nset. 
\]
Then, by Proposition \ref{p.L.4.3} there exists a sequence\footnote{we assume that the sequence has one index} $\{\psi_{n_2,n_3,n_4}\}_{n_4\in\Nset}\subset \mathcal E_A(H)$ such that
\begin{equation} \label{e.B.35} 
  \lim_{n_4\to\infty}\psi_{n_2,n_3,n_4}\stackrel{\pi}{=}\psi_{n_2,n_3},\quad
   \lim_{n_4\to\infty}L_0\psi_{n_2,n_3,n_4}\stackrel{\pi}{=}L\psi_{n_2,n_3}
\end{equation}
\begin{equation}  \label{e.B.36}
 \lim_{n_4\to\infty}\langle D\psi_{n_2,n_3,n_4},h\rangle \stackrel{\pi}{=}
                  \langle D\psi_{n_2,n_3},h\rangle, \quad \forall h\in H.
\end{equation}
Now set
\begin{equation*}
\begin{split}
   & \varphi_{n_1} = R_{\frac{1}{n_1}}\varphi,
\\
   & \varphi_{n_1,n_2} = {n_2}\int_0^{\frac{1}{n_2}}R_{t+\frac{1}{n_1}} \varphi dt,%
\\
  & \varphi_{n_1,n_2,n_3} = {n_2}\int_0^{\frac{1}{n_2}}R_{t+\frac{1}{n_1}} \psi_{n_3} dt=R_{\frac{1}{n_1}} \psi_{n_2,n_3},
\\
   &\varphi_{n_1,n_2,n_3,n_4}= R_{\frac{1}{n_1}} \psi_{n_2,n_3,n_4}
\end{split}
\end{equation*}
As easily checked, by the definition of $\varphi_{n_1,n_2,n_3,n_4}$ and by \eqref{e.B.33}, \eqref{e.B.33}
\[   
    \lim_{n_1\to\infty}\lim_{n_2\to\infty}\lim_{n_3\to\infty}\lim_{n_4\to\infty}
      \frac{ \varphi_{n_1,n_2,n_3,n_4}}{1+|\cdot|_{6}^8}
     \stackrel{\pi}{=} \frac{\varphi}{1+|\cdot|_{6}^8}.
\]
which implies \eqref{e.B.30}.

Let us show \eqref{e.B.31}.
Since $\psi_{n_2,n_3,n_4}$, by \eqref{e.OU.35} we have that $\varphi_{n_1,n_2,n_3,n_4}\in \mathcal E_A(H)$ and by Proposition \ref{p.L.4.3} we have
\[ 
  L\varphi_{n_1,n_2,n_3,n_4}=L_0\varphi_{n_1,n_2,n_3,n_4},\quad \forall n_1,n_2,n_3,n_4\in \Nset.
\]
Consequently, by \eqref{e.B.35} and by (iii) of Proposition \ref{p.L.2.9}
\[
   \lim_{n_4 \to\infty}L_0 \varphi_{n_1,n_2,n_3,n_4}= \lim_{n_4 \to\infty}R_{\frac{1}{n_1}}L\psi_{n_2,n_3,n_4}
\]
\[
   \stackrel{\pi}{=}R_{\frac{1}{n_1}}L\psi_{n_2,n_3}=LR_{\frac{1}{n_1}}\psi_{n_2,n_3}=L\varphi_{n_1,n_2,n_3}.
\]
By (vi) Proposition \ref{p.L.2.9} we have 
\[
   LR_{\frac{1}{n_1}}\psi_{n_2,n_3}
   =n_2\left(R_{\frac{1}{n_1}+\frac{1}{n_2}}\psi_{n_3}-R_{\frac{1}{n_1}}\psi_{n_3}\right).
\]
Therefore
\begin{eqnarray*}
    \lim_{n_2\to\infty}\lim_{n_3 \to\infty} 
     \frac{L\varphi_{n_1,n_2,n_3}\left(R_{\frac{1}{n_1}+\frac{1}{n_2}}   \psi_{n_3}-R_{\frac{1}{n_1}}\psi_{n_3}\right)}{1+V}
    &\stackrel{\pi}{=}&
    \lim_{n_2\to\infty}  
    \frac{\left(R_{\frac{1}{n_1}+\frac{1}{n_2}}\varphi  -  R_{\frac{1}{n_1}}\varphi  \right)}{1+V}  \\   
     &\stackrel{\pi}{=}&     \frac{ R_{\frac{1}{n_1}}L\varphi}{1+V}.
\end{eqnarray*}
The last equality follows by (v) of Proposition \ref{p.B.2.1} and by the fact that $\varphi\in D(L_V,\mathcal C_{b,V}(L^6(0,1)))$.
Finally, since
\[
  \lim_{n_1\to\infty}  \frac{ R_{\frac{1}{n_1}}L\varphi}{1+V}\stackrel{\pi}{=} \frac{L\varphi}{1+V},
\]
\eqref{e.B.31} follows.

Let us show \eqref{e.B.32}.
Notice that for any $n_1,n_2,n_3,n_4\in \Nset $, $h\in H^1_0(0,1)$ we have
\begin{eqnarray*}
    \langle D\varphi_{n_1,n_2,n_3,n_4}(x),D_\xi h\rangle&=& 
      R_{\frac{1}{n_1}}\left(\left\langle e^{\frac{1}{n_1}A}D\psi_{n_2,n_3,n_4} ,D_\xi h\right\rangle\right)(x)
\\
   &=&- R_{\frac{1}{n_1}}\left(\left\langle D_\xi e^{\frac{1}{n_1}A}D\psi_{n_2,n_3,n_4} ,  h\right\rangle\right)(x).
\end{eqnarray*}
Here we have also used formula \eqref{e.B.diff}.
By the elementary properties of the heat semigroup, for any $t>0$ the linear operator $D_\xi e^{tA}:H^1_0(0,1)\to H$, $z\mapsto D_\xi e^{tA}  z$ is bounded by $  |D_\xi e^{tA} z|_2\leq c t^{-1/2}|z|_2$, 
where   $c>0$ is independent of $t$. 
Then  $D_\xi e^{\frac{1}{n_1}A}:H^1_0(0,1)\to H$ can be extended to a linear and bounded operator in $H$, which we still denote  by $D_\xi e^{\frac{1}{n_1}A}$.
Then by \eqref{e.B.36} we have %
\begin{equation*}  %
 \lim_{n_4\to\infty}\langle D_\xi   D\varphi_{n_1,n_2,n_3,n_4},h\rangle \stackrel{\pi}{=}
                  \langle D_\xi   D\varphi_{n_1,n_2,n_3},h\rangle, \quad \forall h\in H.
\end{equation*}
By the same argument we find
\[
     \lim_{n_3\to\infty}\langle D_\xi   D\varphi_{n_1,n_2,n_3},h\rangle \stackrel{\pi}{=}
                  \langle D_\xi   D\varphi_{n_1,n_2},h\rangle, \quad \forall h\in H.
\]
Notice now that by definition of $\varphi_{n_1,n_2}$ we have
\[
    \langle D_\xi D\varphi_{n_1,n_2}(x), h\rangle= 
      R_{\frac{1}{n_1}}\left(\left\langle D_\xi e^{\frac{1}{n_1}A}D\psi_{n_2} , h\right\rangle\right)(x),\quad x,h\in H.
\]
Now, since   $D_\xi e^{\frac{1}{n_1}A}:H\to H $ is linear and bounded, by \eqref{e.B.34} it follows
\[
    \lim_{n_2\to\infty} 
      \frac{ \big\langle D_\xi e^{\frac{1}{n_1}A}D\psi_{n_2}, h\big\rangle}{1+|\cdot|_{6}^8}
        \stackrel{\pi}{=}\frac{ \big\langle D_\xi e^{\frac{1}{n_1}A}D\varphi, h\big\rangle}
          {1+|\cdot|_{6}^8}.
\]
Hence, by Proposition \ref{p.B.2.1} (cfr. Remark \ref{r.B.13}) we have  
\[
    \lim_{n_2\to\infty}\frac{ \langle D_\xi D\varphi_{n_1,n_2}, h\rangle}{1+|\cdot|_{6}^8}
                        \stackrel{\pi}{=}\frac{\langle D_\xi D\varphi_{n_1} , h\rangle}{1+|\cdot|_{6}^8}.
\]
Finally, by Proposition \ref{p.B.2.1} applied to the semigroup $R_t$, ${t\geq0}$ we find 
\begin{eqnarray*}
    \lim_{n_1\to\infty}  \frac{\langle D_\xi D\varphi_{n_1}, h\rangle}{1+|\cdot|_{6}^8}         
   &=&\lim_{n_1\to\infty} 
         \frac{ R_{\frac{1}{n_1}}\left( \langle D_\xi e^{\frac{1}{n_1}A}D\varphi, h\rangle\right) }{1+|\cdot|_{6}^8}    \\                     
           &\stackrel{\pi}{=}&\frac{ \langle D_\xi D\varphi , h\rangle }{1+|\cdot|_{6}^8}.
\end{eqnarray*}
This complete the proof.
\end{proof}

\section{Main result: a core for $K$} \label{s.B.7}
In order to prove  Theorem \ref{T.B.2} below we show that $K$ is an extension of $K_0$ and that $(\lambda I-K_0)(\mathcal E_A(H))$ is dense, with respect to the $\pi$-convergence, in $\mathcal C_{b,V}(L^6(0,1))$.
The proof follows a scheme which has been successfully implemented in \cite{Manca07} and \cite{Manca07}. 
Here we give all the details.
We stress that a crucial role is played by the estimates for the differential $DP_t$ given in the previous section. 
\begin{Theorem} \label{T.B.2}
The operator $(K,D(K,\mathcal C_{b,V}(L^6(0,1))))$ is an extension of $K_0$, and for any $\varphi\in\mathcal E_A(H)$ we have  $\varphi\in D(K,\mathcal C_{b,V}(L^6(0,1)))$ and $K\varphi=K_0\varphi$.
Finally, the set $ \mathcal E_A(H)$ is a $\pi$-core for $(K,D(K,\mathcal C_{b,V}(L^6(0,1))))$, that is for any $\varphi\in D(K,\mathcal C_{b,V}(L^6(0,1)))$ there exist $m\in\Nset$ and   an $m$-indexed sequence 
 $(\varphi_{n_1,\ldots,n_m})_{n_1\in\Nset,\ldots,n_m\in\Nset}\subset \mathcal E_A(H)$ such that
\begin{equation*} 
   \lim_{n_1\to \infty}\cdots\lim_{n_m\to \infty}  \frac{ \varphi_{n_1,\ldots,n_m}}{1+V} \stackrel{\pi}{=}\frac{ \varphi}{1+V} 
\end{equation*}
and
 \begin{equation*}
\lim_{n_1\to \infty}\cdots\lim_{n_m\to \infty}  \frac{ K_0\varphi_{n_1,\ldots,n_m}}{1+V} \stackrel{\pi}{=}\frac{ K\varphi}{1+V}  
 \end{equation*}
\end{Theorem}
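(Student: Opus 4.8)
The plan is to establish the theorem in three stages. First, I would show that $K$ extends $K_0$ on $\mathcal E_A(H)$, i.e. that every $\varphi\in\mathcal E_A(H)$ lies in $D(K,\mathcal C_{b,V}(L^6(0,1)))$ with $K\varphi=K_0\varphi$. The natural route is to pass to the limit $m\to\infty$ in the corresponding statement for the approximating semigroups $P^m_t$: for $\varphi\in\mathcal E_A(H)$, Proposition \ref{p.B.6.4} gives $\varphi\in D(L_V,\mathcal C_{b,V}(L^6(0,1)))$ with $L_V\varphi=L_0\varphi$, and the Galerkin drift term $-\tfrac12\langle D_\xi P_m D\varphi(x),(P_mx)^2\rangle$ converges pointwise (and with the right domination in $1+V$) to $-\tfrac12\langle D_\xi D\varphi(x),x^2\rangle$; combined with the structure of \eqref{e.B.27a}, with Proposition \ref{p.B.2.9}(ii) applied to $P^m_t$, and with the convergence $\frac{P^m_t\varphi}{1+V}\stackrel{\pi}{\to}\frac{P_t\varphi}{1+V}$, this identifies $\lim_{t\to0^+}\frac{P_t\varphi-\varphi}{t}$ pointwise as $K_0\varphi$ while the difference quotients stay bounded in $\|\cdot\|_{0,V}$ uniformly in $t\in(0,1)$. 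Hence $\varphi\in D(K,\mathcal C_{b,V}(L^6(0,1)))$ and $K\varphi=K_0\varphi$.

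Second, I would prove the range condition: for $\lambda>\omega_0,\omega_1$ the set $(\lambda I-K_0)(\mathcal E_A(H))$ is $\pi$-dense in $\mathcal C_{b,V}(L^6(0,1))$. Since $\mathcal E_A(H)$ is already $\pi$-dense in $\mathcal C_{b,V}(L^6(0,1))$ (this follows from Proposition \ref{p.L.4.3} together with $\mathcal C_{b,1}(H)\subset \mathcal C_{b,V}(L^6(0,1))$ and a cut-off/heat-semigroth regularization argument), it suffices to show that for $f\in\mathcal E_A(H)$ the element $(\lambda I-K_0)$ applied to a suitable sequence built from $f$ $\pi$-converges to $f$. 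Here is where Propositions \ref{p.B.17a} and \ref{p.B.17aa} do the work: the resolvent-type function $\varphi=\int_0^\infty e^{-\lambda t}P^m_t f\,dt$ satisfies $(\lambda I-K_m)\varphi=f$, and by \eqref{e.B.27a} this reads $(\lambda I - L_V)\varphi + \tfrac12\langle D_\xi P_m D\varphi,(P_m\cdot)^2\rangle = f$; Proposition \ref{p.B.17aa} supplies a multi-indexed sequence $(\varphi_{n_1,\dots,n_k})\subset\mathcal E_A(H)$ with $\varphi_{n_1,\dots,n_k}\stackrel{\pi}{\to}\varphi$, $L_0\varphi_{n_1,\dots,n_k}\stackrel{\pi}{\to}L\varphi=L_V\varphi$ and $\langle D_\xi D\varphi_{n_1,\dots,n_k},h\rangle\stackrel{\pi}{\to}\langle D_\xi D\varphi,h\rangle$ with weight $1+|\cdot|_6^8$. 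Applying $K_0\varphi_{n_1,\dots,n_k}=L_0\varphi_{n_1,\dots,n_k}-\tfrac12\langle D_\xi D\varphi_{n_1,\dots,n_k},(\cdot)^2\rangle$ and taking iterated $\pi$-limits, $(\lambda I-K_0)\varphi_{n_1,\dots,n_k}\stackrel{\pi}{\to}(\lambda I-L_V)\varphi+\tfrac12\langle D_\xi D\varphi,(\cdot)^2\rangle$. Letting $m\to\infty$ (using $P^m_tf\to P_tf$ hence $\varphi=\varphi^{(m)}\to R(\lambda,K)f$ in the appropriate $\pi$-sense, and the convergence of the Galerkin drift to $\tfrac12\langle D_\xi D\varphi,\cdot\rangle$) yields that $f$ itself lies in the $\pi$-closure of $(\lambda I-K_0)(\mathcal E_A(H))$.

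Third, I would combine the two: by stage one $\lambda I-K_0$ is the restriction of $\lambda I-K$ to $\mathcal E_A(H)$; by stage two its range is $\pi$-dense; and $R(\lambda,K)\in\mathcal L(\mathcal C_{b,V}(L^6(0,1)))$ is $\pi$-continuous (Proposition \ref{p.B.2.9}(iv) together with Proposition \ref{p.B.2.1}(v)). A standard argument then shows $\mathcal E_A(H)$ is a $\pi$-core: given $\varphi\in D(K,\mathcal C_{b,V}(L^6(0,1)))$, set $f=(\lambda I-K)\varphi$, approximate $f$ $\pi$-wise by $(\lambda I-K_0)\psi_n$ with $\psi_n\in\mathcal E_A(H)$, and note $\psi_n=R(\lambda,K)(\lambda I-K_0)\psi_n\stackrel{\pi}{\to}R(\lambda,K)f=\varphi$ while $K_0\psi_n=\lambda\psi_n-(\lambda I-K_0)\psi_n\stackrel{\pi}{\to}\lambda\varphi-f=K\varphi$; the bookkeeping of iterated indices is handled exactly as in the cited works \cite{Manca07}, \cite{Manca07a}.

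The main obstacle is stage two, specifically controlling the nonlinear drift term $\langle D_\xi D\varphi,x^2\rangle$ under $\pi$-limits in a weighted space: one must match the growth $|D\varphi(x)|_{H^1}\lesssim(1+|x|_6)^8$ from Proposition \ref{p.B.5} against the quadratic factor $|x^2|$ so that the product is dominated by $1+V(x)=1+|x|_6^8|x|_4^2$, and simultaneously ensure that the $H^1$-valued differentials of the exponential approximants converge to $D\varphi$ in the correct weighted $\pi$-sense — this is exactly what Proposition \ref{p.B.17aa}, with its careful choice of the heat-regularization $e^{\frac1pA}$ and the weight $1+|\cdot|_6^8$, is designed to provide, so the real work is assembling these pieces in the right order and propagating the multi-index limits without losing the uniform $\|\cdot\|_{0,V}$-bounds.
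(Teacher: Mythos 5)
Your plan is essentially the paper's own proof: the extension statement is exactly Lemma \ref{l.B.18} (Galerkin approximation of the drift plus the uniform-in-$m$ moment bounds to control the difference quotients), and the core property is Lemma \ref{l.B.19}, which relies on the same ingredients you invoke — $\varphi_{n_1}=R(\lambda,K)f_{n_1}$ with $f_{n_1}\in\mathcal E_A(H)$, the Galerkin resolvents $R(\lambda,K_{n_2})f_{n_1}$ with identity \eqref{e.B.27a}, the projected-versus-unprojected drift estimate via \eqref{e.B.28a}, and Proposition \ref{p.B.17aa}. Your repackaging of the last step as ``$\pi$-density of $(\lambda I-K_0)(\mathcal E_A(H))$ plus $\pi$-continuity of $R(\lambda,K)$'' is just an equivalent formulation of the paper's Steps 1--4, so the two arguments coincide in substance.
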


We split the proof into two lemmata.
\begin{Lemma} \label{l.B.18}
  $K$ is an extension of $K_0$ and  $K\varphi=K_0\varphi$ for any $\varphi \in \mathcal E_A(H)$.
\end{Lemma}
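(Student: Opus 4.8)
\textbf{Proof plan for Lemma \ref{l.B.18}.}

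The plan is to show two things: first, that every exponential function $\varphi \in \mathcal E_A(H)$ lies in $D(K, \mathcal C_{b,V}(L^6(0,1)))$ and satisfies $K\varphi = K_0\varphi$; second, that $K$ extends $K_0$ in the sense that $K\varphi = K_0\varphi$ holds on all of $\mathcal E_A(H)$ (which is the domain on which $K_0$ is defined). Since $K_0$ is only defined on $\mathcal E_A(H)$, the two assertions are really the same, and the work is to compute the generator action on a single exponential function $\varphi(x) = e^{i\langle x,h\rangle}$ with $h \in D(A)$ (and then take real and imaginary parts and linear combinations).

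First I would fix $h \in D(A)$ and recall from Section \ref{s.B.OU} that $\varphi = e^{i\langle \cdot, h\rangle}$ belongs to $D(L_V, \mathcal C_{b,V}(L^6(0,1)))$ with $L_V\varphi = L_0\varphi$ (Proposition \ref{p.B.6.4}), and that $\varphi \in \mathcal C_b^1(H)$ with $D\varphi(x) = i h\, e^{i\langle x,h\rangle} \in D(A)$. Then I would use the decomposition $X(t,x) = Z(t,x) + (X(t,x) - Z(t,x))$ exactly as in the proof of Proposition \ref{p.B.17a}: writing $X(t) = Z(t) + \frac12\int_0^t e^{(t-s)A} D_\xi(X(s))^2\,ds$ for the true equation (the $m\to\infty$ limit of \eqref{e.B.16}), a Taylor expansion gives
\[
  P_t\varphi(x) - \varphi(x) = R_t\varphi(x) - \varphi(x) + \frac12\EE\!\left[\int_0^1 \Big\langle D\varphi(\xi X(t) + (1-\xi)Z(t)), \int_0^t e^{(t-s)A} D_\xi(X(s))^2\,ds\Big\rangle d\xi\right].
\]
Dividing by $t$ and letting $t \to 0^+$, the first term converges to $L_V\varphi(x) = L_0\varphi(x)$, while the second term converges to $-\frac12\langle D_\xi D\varphi(x), x^2\rangle$ by the same argument as for \eqref{e.B.27d}: the inner time average $\frac1t\int_0^t e^{(t-s)A} D_\xi(X(s))^2\,ds \to D_\xi x^2$ in $W^{-1,2}(0,1)$ as $t\to 0^+$ using $X(\cdot,x) \in C([0,T];L^4(0,1))$ and strong continuity of $e^{tA}$ on $W^{-1,2}$, the differential $D\varphi$ is continuous so $D\varphi(\xi X(t)+(1-\xi)Z(t)) \to D\varphi(x)$, and dominated convergence applies because $D\varphi$ is \emph{bounded} (here the exponential function is better behaved than the general resolvent function of Proposition \ref{p.B.17a}, so the domination is immediate from $\sup_{t\le T}|X(t,x)|_4^2 \in L^1(\Omega)$ via Proposition \ref{p.B.7}). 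Adding the two limits gives
\[
  \lim_{t\to 0^+}\frac{P_t\varphi(x)-\varphi(x)}{t} = L_0\varphi(x) - \tfrac12\langle D_\xi D\varphi(x), x^2\rangle = K_0\varphi(x),
\]
pointwise in $x \in L^6(0,1)$.

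To conclude that $\varphi \in D(K,\mathcal C_{b,V}(L^6(0,1)))$ I still need the uniform bound $\sup_{t\in(0,1)}\|(P_t\varphi - \varphi)/t\|_{0,V} < \infty$ and that the pointwise limit $K_0\varphi$ itself belongs to $\mathcal C_{b,V}(L^6(0,1))$. The latter is clear: $L_0\varphi \in \mathcal C_{b,1}(H) \subset \mathcal C_{b,V}(L^6(0,1))$ and $\langle D_\xi D\varphi(x), x^2\rangle = -i\langle D_\xi h, x^2\rangle e^{i\langle x,h\rangle}$, whose modulus is bounded by $|D_\xi h|_\infty |x|_2^2 \le c(1+V(x))$ since $h \in D(A) \subset H^1_0(0,1)$. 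For the uniform bound, I would split $(P_t\varphi-\varphi)/t = (R_t\varphi-\varphi)/t + (\text{the Taylor remainder})/t$: the first piece is uniformly bounded in $\|\cdot\|_{0,V}$ (indeed in $\|\cdot\|_{0,1}$) because $\varphi \in D(L_V,\mathcal C_{b,V}(L^6(0,1)))$, and the remainder piece is estimated in absolute value, as in \eqref{e.B.31a}, by $\|D_\xi h\|_\infty \cdot \EE\big[\sup_{s\le 1}|X(s,x)|_4^2\big] \le c(1+|x|_4^2) \le c(1+V(x))$ using boundedness of $D\varphi$ and Proposition \ref{p.B.7}. Taking real and imaginary parts and forming finite linear combinations extends everything to all of $\mathcal E_A(H)$ by linearity. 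The main obstacle is the short-time limit of the nonlinear (Burgers) term, i.e. establishing \eqref{e.B.27d} in this setting; but this is essentially a transcription of the argument already carried out in the proof of Proposition \ref{p.B.17a}, and in fact simpler here because the test function has a bounded first derivative, so the only genuinely new point is checking that the estimates are uniform enough in $x$ to land $K_0\varphi$ in the weighted space $\mathcal C_{b,V}(L^6(0,1))$.
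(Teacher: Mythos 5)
Your argument is correct and follows the same core strategy as the paper: a Taylor expansion of $\varphi$ around the Ornstein--Uhlenbeck process $Z(t,x)$, identification of the limit of the time-averaged nonlinear term as $-\tfrac12\langle D_\xi D\varphi(x),x^2\rangle$ exactly as in \eqref{e.B.27d}, domination via Proposition \ref{p.B.7} to get the uniform bound $\sup_{t\in(0,1)}\|(P_t\varphi-\varphi)/t\|_{0,V}<\infty$, and finally Proposition \ref{p.B.6.4} (equivalently $L_V\varphi=L_0\varphi$) to conclude $K\varphi=K_0\varphi$. The one organizational difference is that the paper does not work directly with the limiting mild equation: it first writes the Taylor identity for the Galerkin objects $X^m$, $P_t^m$ (with the projections $P_m$ and the pairing rewritten, after integration by parts, as $\langle D_\xi P_m e^{(t-s)A}h,(P_mX^m(s,x))^2\rangle$), lets $m\to\infty$ at fixed $t$ using Theorem \ref{t.B.10}, and only then sends $t\to0^+$; this lets it reuse the computation of Proposition \ref{p.B.17a} essentially verbatim. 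Your shortcut of expanding directly along $X(t)=Z(t)+\tfrac12\int_0^t e^{(t-s)A}D_\xi(X(s))^2\,ds$ is legitimate here precisely because $D\varphi(x)=i\varphi(x)h$ with $h\in D(A)$, so after integrating by parts the nonlinear pairing is controlled by $|D_\xi h|_2\,|X(s)|_4^2$ with no need for projections or for the $(1+|x|_6)^8$ gradient estimate; the price is that you must (as you note) re-justify the transcription of the \eqref{e.B.27d}-type limit for the non-projected equation, which indeed goes through since $X(\cdot,x)\in C([0,T];L^6(0,1))$ a.s. and $\sup_{t\le T}|X(t,x)|_4^2\in L^1(\Omega)$.
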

\begin{proof}
Take $h\in D(A)$.
It is sufficient to show the claim for 
\[
   \varphi(x)= e^{i\langle x,h \rangle},\quad x\in L^6(0,1).
\]
Let $(L,D(L,\mathcal C_{b,1}(H)))$ be the infinitesimal generator in $\mathcal C_{b,1}(H)$ of the Ornstein-Uhlenbeck semigroup associated to the mild solution of \eqref{e.B.3.1} 
and, for any $m\in \Nset$, let $(K_m,D(K_m,\mathcal C_{b,V}(L^6(0,1))))$ be the infinitesimal generator of the semigroup $P_t^m$, ${t\geq0}$ in $\mathcal C_{b,V}(L^6(0,1))$, as defined in \eqref{e.B.19}, \eqref{e.B.20}.
Since $\mathcal E_A(H)\subset D(L,\mathcal C_b^1(H))\cap\mathcal  C_{b,1}(H)$, 
by arguing as for Proposition \ref{p.B.17a}  we find that for any $t\geq0$, $x\in L^6(0,1)$ it holds
\[
   P_t^m\varphi(x) - R_t\varphi(x) 
\]
\[
 =  \frac{i}{2}\EE\left[\int_0^1  \varphi(\xi Z (t,x)+(1-\xi)X^m(t,x))d\xi \left\langle h, 
   \int_0^t e^{(t-s)A}P_m D_\xi(P_mX^m(s,x))^2ds\right\rangle  \right]
\]
\[
   =\frac{i}{2}\EE\left[\int_0^1  \varphi(\xi Z (t,x)+(1-\xi)X^m(t,x))d\xi
   \int_0^t \left\langle h,  e^{(t-s)A}P_m D_\xi(P_mX^m(s,x))^2\right\rangle ds d\xi \right]
\]
\[
   =-\frac{i}{2}\EE\left[\int_0^1  \varphi(\xi Z (t,x)+(1-\xi)X^m(t,x))d\xi
   \int_0^t \left\langle D_\xi P_m e^{(t-s)A}h,   (P_mX^m(s,x))^2\right\rangle ds   \right].
\]
In the above computation we have used $D\varphi(x)=i\varphi(x)h$, $x\in L^6(0,1)$.
Letting $m\to\infty$, by Theorem \ref{t.B.10} we find
\[
   P_t\varphi(x)-\varphi(x)=R_t\varphi(x)-\varphi(x)
\]
\[
   -\frac{i}{2}\EE\left[\int_0^1  \varphi(\xi Z (t,x)+(1-\xi)X(t,x))d\xi
   \int_0^t \left\langle D_\xi e^{(t-s)A}h,   (X(s,x))^2\right\rangle ds   \right].
\]
This implies, still by arguing as for Proposition \ref{p.B.17a},
\[
   \lim_{t\to 0^+}\frac{P_t\varphi(x)-\varphi(x)}{t}= L\varphi(x)-\frac{i}{2}\varphi(x)\langle D_\xi h,x^2\rangle=
      L\varphi(x)-\frac{1}{2}\langle D_\xi D\varphi(x),x^2\rangle,
\]
for any $x\in L^6(0,1)$.
As easily seen, $|D_\xi e^{tA}h|_2\leq \pi|D_\xi h|_2$, then
\[
\left|\frac{P_t\varphi(x)-\varphi(x)}{t}\right|\leq
   \left|\frac{R_t\varphi(x)-\varphi(x)}{t}\right|+\frac{|D_\xi h|_2}{2t}\int_0^t\EE[|X(s,x)|_{4}^2]ds
\]
Now, since $\varphi\in D(L,\mathcal C_{b,1}(H))$, the first term of right-hand side is bounded by 
\[
   \left|\frac{R_t\varphi(x)-\varphi(x)}{t}\right|\leq c(1+|x|_2),
\]
where $c_{\varphi,T}>0$ depends only by $\varphi$ and $T$.
By Proposition \ref{p.B.7}, the last term on the right-hand side is bounded by
\[
   \frac{|D_\xi h|_2}{2t}\int_0^t\EE[|X(s,x)|_{4}^2]ds \leq \frac{|D_\xi h|_2}{2 } \EE[\sup_{t\in[0,T]}|X(t,x)|_{4}^2]ds \leq  \frac{c_T|D_\xi h|_2}{2 }(1+|x|_4^2),
\]
where $c_T>0$ depends only by $T$.
Then,
\[
   \sup_{t\in(0,1)}\left\|\frac{P_t\varphi-\varphi}{t}\right\|_{0,V}<\infty.
\]
This implies $\varphi \in D(K,\mathcal C_{b,V}(L^6(0,1)))$ and $K\varphi=L_V\varphi-\frac12\langle D_\xi D\varphi,(\cdot)^2\rangle$.
Consequently, the claim follows by Proposition \ref{p.B.6.4}.
\end{proof}
 \begin{Lemma} \label{l.B.19}
 $\mathcal E_A(H)$ is a $\pi$-core for $(K,D(K,\mathcal C_{b,V}(L^6(0,1))))$, that is for any $\varphi\in D(K,\mathcal C_{b,V}(L^6(0,1)))$ there exist $m\in\Nset$ and   an $m$-indexed sequence $(\varphi_{n_1,\ldots,n_m})_{n_1\in\Nset,\ldots,n_m\in\Nset}\subset \mathcal E_A(H)$ such that
\begin{equation} \label{e.B.37}
   \lim_{n_1\to \infty}\cdots\lim_{n_m\to \infty}  \frac{\varphi_{n_1,\ldots,n_m}}{1+V} \stackrel{\pi}{=}%
   \frac{\varphi}{1+V}
\end{equation}
and
 \begin{equation}\label{e.B.38}
   \lim_{n_1\to \infty}\cdots\lim_{n_m\to \infty} \frac{K_0\varphi_{n_1,\ldots,n_m}}{1+V} \stackrel{\pi}{=}%
   \frac{K\varphi}{1+V}.
 \end{equation}
\end{Lemma}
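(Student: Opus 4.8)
**The plan is to show that the resolvent range $(\lambda I - K_0)(\mathcal E_A(H))$ is $\pi$-dense in $\mathcal C_{b,V}(L^6(0,1))$**, which together with Lemma~\ref{l.B.18} and the general abstract criterion for $\pi$-cores (see \cite{Manca07}, \cite{Priola}) yields the statement. The key point is that $K$ is the generator of $P_t$ (so $(\lambda I - K)$ is invertible with inverse $R(\lambda,K)$ for $\lambda>\omega_0$), hence it suffices to produce, for each $f$ in a $\pi$-dense subset of $\mathcal C_{b,V}(L^6(0,1))$, an $m$-indexed sequence in $\mathcal E_A(H)$ whose images under $K_0$ $\pi$-converge (in the $(1+V)^{-1}$-weighted sense) to something whose $R(\lambda,K)$-image recovers $\varphi=R(\lambda,K)f$. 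Since $\mathcal E_A(H)$ is $\pi$-dense in $\mathcal C_{b,V}(L^6(0,1))$, it is enough to treat $f\in\mathcal E_A(H)$, and then approximate $R(\lambda,K)f$.

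First I would set, for $f\in\mathcal E_A(H)$ and $\lambda>\omega_0,\omega_1$, the functions $\varphi^m = R(\lambda,K_m)f = \int_0^\infty e^{-\lambda t}P_t^m f\,dt$ studied in Proposition~\ref{p.B.17a}. Letting $m\to\infty$, the convergence $P_t^m f \stackrel{\pi}{\to} P_t f$ (uniform estimates from Proposition~\ref{p.B.7}, dominated convergence) gives $\frac{\varphi^m}{1+V}\stackrel{\pi}{\to}\frac{R(\lambda,K)f}{1+V}=\frac{\varphi}{1+V}$, and since $(\lambda I-K_m)\varphi^m = f$ with $K_m\varphi^m = L_V\varphi^m - \frac12\langle D_\xi P_m D\varphi^m,(P_m\cdot)^2\rangle$ (Proposition~\ref{p.B.17a}(ii)), I would pass to the limit in this identity. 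The term $\langle D_\xi P_m D\varphi^m(x),(P_mx)^2\rangle$ must converge to $\langle D_\xi D\varphi(x),x^2\rangle$ in the weighted $\pi$-sense; this is where the $H^1$-bound \eqref{e.B.28a} on $D\varphi^m$ (uniform in $m$) is essential, controlling the growth by $(1+|x|_6^8)$, while the $P_m$-projections are removed using $P_m\to I$ strongly and the $L^4$-factor absorbed into $V$. The outcome is $(\lambda I - K)\varphi = f$ with $K\varphi = L_V\varphi - \frac12\langle D_\xi D\varphi,(\cdot)^2\rangle$, i.e. $\varphi\in D(K,\mathcal C_{b,V}(L^6(0,1)))$ is represented through the OU generator plus the Burgers drift term.

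Next I would invoke Proposition~\ref{p.B.17aa}: for fixed $m$, the function $\varphi^m$ admits a $k$-indexed sequence $(\varphi^m_{n_1,\dots,n_k})\subset\mathcal E_A(H)$ with \eqref{e.B.30}, \eqref{e.B.31}, \eqref{e.B.32}, i.e. the exponential-function approximants $\pi$-converge to $\varphi^m$, their images under $L_0$ $\pi$-converge to $L\varphi^m$, and $\langle D_\xi D\varphi^m_{n_1,\dots,n_k},h\rangle$ $\pi$-converges to $\langle D_\xi D\varphi^m,h\rangle$ with weight $(1+|\cdot|_6^8)$. Since $K_0 = L_0 - \frac12\langle D_\xi D(\cdot),(\cdot)^2\rangle$ on $\mathcal E_A(H)$, combining these gives $\frac{K_0\varphi^m_{n_1,\dots,n_k}}{1+V}\stackrel{\pi}{\to}\frac{K_m\varphi^m}{1+V}$; here I would write $\langle D_\xi D\varphi^m_{n_1,\dots,n_k}(x),x^2\rangle$ as a pairing of an $H^1$-valued quantity against $x^2\in L^2$ up to the $|x|_4^2$ factor, so that division by $1+V(x)=1+|x|_6^8|x|_4^2$ converts the $(1+|\cdot|_6^8)$-weighted convergence of \eqref{e.B.32} into $(1+V)^{-1}$-weighted convergence after testing against a sequence of $h$'s approximating the identity (a diagonal argument over an orthonormal basis of $H$, contributing one more index).

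Finally I would assemble everything by a diagonalization across the indices $m$, then $(n_1,\dots,n_k)$, then the basis-index: this produces a single multi-indexed sequence in $\mathcal E_A(H)$ satisfying \eqref{e.B.37} and \eqref{e.B.38} for $\varphi = R(\lambda,K)f$. Because $(\lambda I - K)$ is a bijection from $D(K,\mathcal C_{b,V}(L^6(0,1)))$ onto $\mathcal C_{b,V}(L^6(0,1))$ (Proposition~\ref{p.B.2.9}(iv)) and such $f$ range over the $\pi$-dense set $\mathcal E_A(H)$, a further $\pi$-approximation argument (using the $\pi$-continuity of $R(\lambda,K)$ from Proposition~\ref{p.B.2.1}(v)) upgrades this to arbitrary $\varphi\in D(K,\mathcal C_{b,V}(L^6(0,1)))$, completing the proof. \textbf{The main obstacle} I anticipate is bookkeeping the weights: the drift term naturally carries growth $|x|_6^8\cdot|x|_4^2 = V(x)$, but the approximation results in Proposition~\ref{p.B.17aa} are phrased with the weaker weight $1+|\cdot|_6^8$, so the passage to the limit in $\langle D_\xi D\varphi,(\cdot)^2\rangle$ must carefully track the $|x|_4^2$-factor — using the $L^4\hookrightarrow L^2$ pairing of $x^2$ against the $H^1$-valued differential — and verify a uniform $(1+V)$-domination at every stage so that $\pi$-convergence (not merely pointwise) genuinely holds; the removal of the Galerkin projections $P_m$ inside this pairing requires the same care, since $D_\xi P_m$ is not uniformly bounded and one must exploit that it acts on $D\varphi^m\in H^1$ where $D_\xi P_m D\varphi^m\to D_\xi D\varphi^m$ in $L^2$.
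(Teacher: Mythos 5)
Your overall scheme coincides with the paper's: approximate $f=\lambda\varphi-K\varphi$ by exponentials, use the Galerkin resolvents $R(\lambda,K_m)f$ together with Proposition \ref{p.B.17a}, approximate those by elements of $\mathcal E_A(H)$ via Proposition \ref{p.B.17aa}, and assemble a multi-indexed sequence. However, one step in your middle paragraph is a genuine gap as stated: you pass to the limit $m\to\infty$ in the identity $K_m\varphi^m=L_V\varphi^m-\tfrac12\langle D_\xi P_mD\varphi^m,(P_m\cdot)^2\rangle$ and conclude that $\varphi=R(\lambda,K)f$ satisfies $K\varphi=L_V\varphi-\tfrac12\langle D_\xi D\varphi,(\cdot)^2\rangle$. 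This presupposes that $R(\lambda,K)f$ is Fr\'echet differentiable (with $D\varphi^m\to D\varphi$ in a suitable sense) and belongs to $D(L_V)$, none of which is available: the whole difficulty of the problem, as emphasized in Section 6.1, is that no estimate on $X_x(t,x)$, hence on $DP_t\varphi$ for the limit semigroup, is known; the bound \eqref{e.B.28a} is uniform in $m$ but the paper never upgrades it to differentiability of the limit resolvent, and its proof is structured precisely to avoid doing so. The paper instead keeps the Galerkin index as one index of the approximating sequence and obtains the convergence $K_{n_2}\varphi_{n_1,n_2}\to K\varphi_{n_1}$ from the trivial resolvent identity $K_{n_2}R(\lambda,K_{n_2})f_{n_1}=\lambda R(\lambda,K_{n_2})f_{n_1}-f_{n_1}$ (see \eqref{e.B.40}), so no representation formula for $K\varphi_{n_1}$ or $K\varphi$ is ever needed. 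Your argument can be repaired in exactly this way, since your final assembly only uses $K_m\varphi^m\to K\varphi$, not the explicit formula.

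A related, smaller point: to convert the fixed-$h$ convergence \eqref{e.B.32} into convergence of the pairing against $x^2$ you propose a diagonalization over an orthonormal basis, but this needs a uniform domination (e.g.\ a uniform $H^1$ bound on $D\varphi_{n_1,\dots,n_k}$ for the exponential approximants) that Proposition \ref{p.B.17aa} does not provide. The paper sidesteps this by exploiting the finite-rank projection $P_{n_2}$ still present at that stage of the limit (see \eqref{e.B.45}), and removes the projections only at the level of $\varphi_{n_1,n_2}$, where the uniform bound \eqref{e.B.28a} applies (see \eqref{e.B.42}). With these two adjustments — using the resolvent identity instead of passing to the limit in the drift representation, and handling the $x^2$-pairing through the projection rather than a basis diagonalization — your proof matches the paper's.
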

\noindent
{\bf Step 1}. Take $\varphi\in D(K,\mathcal C_{b,V}(L^6(0,1)))$ and fix $\lambda > \omega_0,\,\omega_1$, where $\omega_0$ is as in Proposition \ref{p.B.2.1} and $\omega_1$ is as in Proposition \ref{p.B.5}. 
We set
$\lambda\varphi - K\varphi=f$.
By Proposition \ref{p.B.2.9} we have $\varphi=R(\lambda,K)f$.
Let us fix a sequence $(f_{n_1})_{n_1\in \Nset}\subset \mathcal E_A(H)$ such that
\[
     \lim_{n_1\to \infty} \frac{f_{n_1}}{1+V} \stackrel{\pi}{=}%
   \frac{f}{1+V}.
\]
We set $\varphi_{n_1}= R(\lambda,K)f_{n_1}$.
By Proposition \ref{p.B.2.1}, \ref{p.B.2.9} it follows
\begin{equation} \label{e.B.39}
   \lim_{n_1\to \infty}   \frac{\varphi_{n_1}}{1+V} \stackrel{\pi}{=}%
   \frac{\varphi}{1+V} ,\quad
   \lim_{n_1\to \infty}   \frac{K\varphi_{n_1}}{1+V} \stackrel{\pi}{=}%
   \frac{K\varphi}{1+V}. 
\end{equation} %
{\bf Step 2}.
Now let $(K_m,D(K_m,\mathcal C_{b,V}(L^6(0,1))))$ be the infinitesimal generator of the semigroup $P_t^m$, ${t\geq0}$ in the space $\mathcal C_{b,V}(L^6(0,1))$, as defined in \eqref{e.B.20}.
We set
\[
  \varphi_{n_1,n_2}=\int_0^\infty e^{-\lambda t} P_t^{n_2}f_{n_1}dt.
\]
By Proposition \ref{p.B.2.9} we have $\varphi_{n_1,n_2}\in D(K_{n_2},\mathcal C_{b,V}(L^6(0,1)))$ and by a standard computation 
\begin{equation} \label{e.B.40}
\lim_{n_2\to \infty}    \frac{ \varphi_{n_1,n_2}}{1+V}\stackrel{\pi}{=} \frac{ \varphi_{n_1}}{1+V}, \quad
\lim_{n_2\to \infty}    \frac{ K_{n_2}\varphi_{n_1,n_2}}{1+V}\stackrel{\pi}{=} \frac{ K\varphi_{n_1}}{1+V}.
\end{equation}
Notice that $f_{n_1}$ satisfies the hypothesis of Proposition \ref{p.B.17a}.
Hence, $\varphi_{n_1,n_2} \in D(L_V,\mathcal C_{b,V}(L^6(0,1)))$ and %
\begin{equation} \label{e.B.41}
   K_{n_2}\varphi_{n_1,n_2}=L\varphi_{n_1,n_2}-\frac12
\left\langle D_\xi P_{n_2} D\varphi_{n_1,n_2}, (P_{n_2}\cdot)^2 \right\rangle,
\end{equation}
for any $n_1,n_2\in \Nset$, $x\in L^6(0,1)$. 
In addition, by \eqref{e.B.28a} it holds  
\begin{eqnarray*}
 &&  \left|\left\langle D_\xi   D\varphi_{n_1,n_2}(x), x^2 \right\rangle-\left\langle D_\xi P_{n_2} D\varphi_{n_1,n_2}(x), (P_{n_2}x)^2 \right\rangle\right|  
\\
&&\qquad=  \left|\left\langle   D\varphi_{n_1,n_2}(x), D_\xi\left(x^2\right)- P_{n_2}D_\xi(P_{n_2}x)^2\right\rangle\right|
\\
&&\qquad\leq \left|   D\varphi_{n_1,n_2}(x)\right|_{H^1(0,1)} 
           \left| D_\xi\left(x^2\right)- P_{n_2}D_\xi(P_{n_2}x)^2  \right|_{W^{-1,2}(0,1)}  \\
&&\qquad\leq  \left(\frac{\|D\varphi_{n_1}\|_{C_b(H;H^1(0,1))}+c\|\varphi_{n_1}\|_0} {\lambda-\omega_1}\right)\left(1+|x|_{6}   \right)^8 \\
&&\qquad\qquad \qquad\times\left| D_\xi\left(x^2\right)- P_{n_2}D_\xi(P_{n_2}x)^2  \right|_{W^{-1,2}(0,1)}
\end{eqnarray*}
for any $x\in L^6(0,1)$, where $W^{-1,2}(0,1)$ is the topological dual of $H^1(0,1)$ endowed with the norm $|\cdot|_{W^{-1,2}(0,1)}$.
Consequently,
\begin{equation} \label{e.B.42}
\lim_{n_2\to\infty }\frac{ \left\langle D_\xi   D\varphi_{n_1,n_2}(x), x^2 \right\rangle
     -\left\langle D_\xi P_{n_2} D\varphi_{n_1,n_2}(x), (P_{n_2}x)^2 \right\rangle}{1+V}\stackrel{\pi}{=}0
\end{equation}

\noindent
{\bf Step 3}.
By Proposition \ref{p.B.17aa}    for any $n_1,n_2\in \Nset$ there exists a sequence (we assume that it has one index) 
   $\{\varphi_{n_1,n_2,n_3}\}_{n_3\in \Nset} \subset \mathcal E_A(H)$ such that
\begin{equation} \label{e.B.43}
  \lim_{n_3\to \infty }  \frac{\varphi_{n_1,n_2,n_3}}{1+V}
        \stackrel{\pi}{=} \frac{\varphi_{n_1,n_2}}{1+V}
\end{equation}
\begin{equation} \label{e.B.44}
  \lim_{n_3\to \infty }\frac{L_0\varphi_{n_1,n_2,n_3}}{1+V}  
       \stackrel{\pi}{=} \frac{L\varphi_{n_1,n_2}}{1+V}
\end{equation}
and
\[
    \lim_{n_3\to \infty} 
    \frac{ \langle D_\xi   D\varphi_{n_1,n_2,n_3},h\rangle }{1+|\cdot|_{6}^8}
    \stackrel{\pi}{=}
     \frac{\langle D_\xi   D\varphi_{n_1,n_2 },h\rangle}{1+|\cdot|_{6}^8}, \quad \forall h\in H.
\]
Then it follows 
\begin{equation}\label{e.B.45}
    \lim_{n_3\to \infty} \frac{\Big\langle D_\xi P_{n_2} D\varphi_{n_1,n_2,n_3},\left(\cdot\right)^2\Big\rangle}{1+V} 
 \stackrel{\pi}{=}
     \frac{ \left\langle D_\xi P_{n_2} D\varphi_{n_1,n_2},\left(\cdot\right)^2\right\rangle}{1+V}.
\end{equation}

\noindent
{\bf Step 4}.
By construction, $( \varphi_{n_1,n_2,n_3})_{n_1,n_2,n_3}\subset \mathcal E_A(H)$.
By \eqref{e.B.39}, \eqref{e.B.40}, \eqref{e.B.43}
\[
  \lim_{n_1\to \infty}\lim_{n_2\to \infty} \lim_{n_3\to \infty }\frac{\varphi_{n_1,n_2,n_3}}{1+V}
    \stackrel{\pi}{=}\frac{\varphi}{1+V}.
\]
Hence \eqref{e.B.37} follows.
Let us show \eqref{e.B.38}.
By Lemma \ref{l.B.18}, for any $n_1,n_2,n_3\in \Nset$, $x\in L^6(0,1)$ we have 
\[
  K\varphi_{n_1,n_2,n_3}(x)=K_0\varphi_{n_1,n_2,n_3}(x)=
    L_0\varphi_{n_1,n_2,n_3}(x)-\frac12\left\langle D_\xi \varphi_{n_1,n_2,n_3}(x),x^2\right\rangle. 
\]
By \eqref{e.B.44}, \eqref{e.B.45},
\[
 \lim_{n_3\to \infty } \frac{K_0\varphi_{n_1,n_2,n_3} }{1+V}\stackrel{\pi}{=}
                       \frac{  L\varphi_{n_1,n_2}-\frac12 
            \left\langle D_\xi D\varphi_{n_1,n_2},(\cdot)^2\right\rangle   }{1+V}
\]
By \eqref{e.B.41} it holds
\begin{multline*}
   L\varphi_{n_1,n_2}-\frac12  \left\langle D_\xi D\varphi_{n_1,n_2},(\cdot)^2\right\rangle
       \\
  = K_{n_2}D\varphi_{n_1,n_2} +\frac12\left\langle D_\xi P_{n_2} D\varphi_{n_1,n_2}, (P_{n_2}\cdot)^2 \right\rangle
  - \frac12\left\langle D_\xi D\varphi_{n_1,n_2},(\cdot)^2 \right\rangle 
\end{multline*}
By \eqref{e.B.40}, \eqref{e.B.42}
\[
 \lim_{n_3\to \infty }    \frac{      L\varphi_{n_1,n_2,n_3}-\frac12  \left\langle D_\xi D\varphi_{n_1,n_2,n_3},(\cdot)^2\right\rangle    }{1+V}
     \stackrel{\pi}{=}\frac{ K\varphi_{n_1,n_2}}{1+V} 
\]
Finally, by \eqref{e.B.39}, \eqref{e.B.40} we have
\[
  \lim_{n_1\to \infty }\lim_{n_2\to \infty }
    \frac{K\varphi_{n_1,n_2}}{1+V} \stackrel{\pi}{=}  \frac{K\varphi}{1+V}.
\]

\section{The Fokker-Planck equation and Proof of Theorem \ref{T.B.1.4}}

Since the semigroup $P_t$ ,$t\geq0$ is markovian and Theorem \ref{t.B.7.1} holds, it follows that $P_t$ , $t\geq0$ acts on $\mathcal  C_b(L^6(0,1))$.
In particular, we have that $P_t$, $t\geq0$ is a $\pi$-semigroup on $\mathcal  C_b(L^6(0,1))$, as introduced in \cite{Priola}.
We define its infinitesimal generator by
\begin{equation}  \label{e.B.0a}
\begin{cases}
\DS D(K,\mathcal C_b(L^6(0,1)))=\bigg\{ \varphi \in  D(K,\mathcal C_b(L^6(0,1))): \exists g\in \mathcal C_b(L^6(0,1)),   \\ 
  \DS  \qquad  \lim_{t\to 0^+} \frac{ {P}_t\varphi(x)-\varphi(x)}{t}= g(x),\,x\in L^6(0,1),\;
    \sup_{t\in(0,1)}\left\|\frac{ {P}_t\varphi-\varphi}{t}\right\|_0<\infty \bigg\}\\    
   {}   \\
  \DS  {K}\varphi(x)=\lim_{t\to 0^+} \frac{ {P}_t\varphi(x)-\varphi(x)}{t},\quad \varphi\in D(K,\mathcal C_b(L^6(0,1))),\,x\in L^6(0,1).
\end{cases}
\end{equation}

We have the following
\begin{Theorem} \label{T.1} 
The family of  linear maps $ P_t^* :(\mathcal C_b(L^6(0,1)))^*\to (\mathcal C_b(L^6(0,1)))^*$, $t\geq0$, defined by the formula
\begin{equation*}
  \langle \varphi, P_t^*F\rangle_{\sigma(\mathcal C_b(L^6(0,1)),\,(\mathcal C_b(L^6(0,1)))^*)} = 
  \langle P_t\varphi, F\rangle_{\sigma(\mathcal C_b(L^6(0,1)),\, (\mathcal C_b(L^6(0,1)))^*)},\, 
\end{equation*}
where $t\geq0, \, F\in (\mathcal C_b(L^6(0,1)))^*,\, \varphi \in \mathcal C_b(L^6(0,1))$, is a  semigroup of linear operators  on $(\mathcal C_b(L^6(0,1)))^*$ which is stable on $\mathcal M(L^6(0,1))$. 
Moreover,   for any $\mu\in \mathcal M(L^6(0,1))$ there exists a unique family of measures  $\{\mu_t,\;t\geq0\}\subset \mathcal M (L^6(0,1))$
fulfilling
\begin{equation} \label{e.5b}
   \int_0^T|\mu_t|_{TV}(L^6(0,1))dt<\infty,\quad T>0; 
\end{equation} 
\begin{multline}  \label{e.2.6}
 \int_{L^6(0,1)} \varphi(x)\mu_t(dx)-\int_{L^6(0,1)} \varphi(x)\mu (dx)\\
    =\int_0^t\bigg(\int_{L^6(0,1)} K\varphi(x)\mu_s(dx)\bigg)ds,
\end{multline} 
for any $\varphi \in D(K,\mathcal C_b(L^6(0,1)))$, $t\geq0$.
Finally, the solution of \eqref{e.5b}, \eqref{e.2.6} 
is given by $  P_t^*\mu$, ${t\geq0}$.
\end{Theorem}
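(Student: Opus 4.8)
I would follow the scheme of \cite{Manca07}, \cite{Manca07a}, handling in turn the properties of the maps $P_t^\ast$, the existence part, and the uniqueness part. First, $P_t\in\mathcal L(\mathcal C_b(L^6(0,1)))$ with $\|P_t\|\le 1$: it is given by the Markov kernels $\pi_t(x,\cdot)$, which by Proposition \ref{p.B.2.1}(ii) and Theorem \ref{t.B.7.1} are Borel probability measures on $L^6(0,1)$ depending measurably on $x$. Hence $P_t^\ast$ is a bounded operator on $(\mathcal C_b(L^6(0,1)))^\ast$, $P_t^\ast P_s^\ast=P_{t+s}^\ast$ is dual to $P_sP_t=P_{t+s}$, $P_0^\ast=I$, and for $\mu\in\mathcal M(L^6(0,1))$ Fubini's theorem gives $\int P_t\varphi\,d\mu=\int\varphi\,d\widetilde\mu$ with $\widetilde\mu(B):=\int\pi_t(x,B)\,\mu(dx)\in\mathcal M(L^6(0,1))$ and $|\widetilde\mu|_{TV}(L^6(0,1))\le|\mu|_{TV}(L^6(0,1))$; thus $P_t^\ast$ is stable on $\mathcal M(L^6(0,1))$ and $P_t^\ast\mu=\widetilde\mu$. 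I also record, for later use, that $R(\lambda,K)f$ and $R(\lambda,K)^2f$ belong to $D(K,\mathcal C_b(L^6(0,1)))$ for every $f\in\mathcal C_b(L^6(0,1))$ and $\lambda>0$, and that, since $X(t,x)\to x$ in $L^6(0,1)$ $\PP$-a.s. as $t\to 0^+$ (Theorem \ref{t.B.7.1}), one has $P_tf(x)\to f(x)$ for each $x$, whence $\lambda^2R(\lambda,K)^2f\stackrel{\pi}{\to}f$ as $\lambda\to\infty$; consequently $R(\lambda,K)^2(\mathcal C_b(L^6(0,1)))$ is $\pi$-dense in $\mathcal C_b(L^6(0,1))$ and separates the finite Borel measures on $L^6(0,1)$ (bounded pointwise convergence passes to the integral against a finite measure).

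\textbf{Existence.} Put $\mu_t:=P_t^\ast\mu$; then \eqref{e.5b} holds because $|\mu_t|_{TV}(L^6(0,1))\le|\mu|_{TV}(L^6(0,1))$. For $\varphi\in D(K,\mathcal C_b(L^6(0,1)))$ the $\pi$-semigroup calculus associated with \eqref{e.B.0a} (the $\mathcal C_b$-analogue of Proposition \ref{p.B.2.9}) yields $P_t\varphi(x)-\varphi(x)=\int_0^tP_sK\varphi(x)\,ds$; integrating against $\mu$ and applying Fubini — legitimate since $\|P_sK\varphi\|_0\le\|K\varphi\|_0$ and $|\mu|_{TV}(L^6(0,1))<\infty$ — turns the right-hand side into $\int_0^t\big(\int K\varphi\,d(P_s^\ast\mu)\big)\,ds$, which is \eqref{e.2.6}. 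So $P_t^\ast\mu$ is a solution.

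\textbf{Uniqueness.} Let $\{\mu_t\}$ be any solution of \eqref{e.5b}, \eqref{e.2.6} and set $\nu_t:=\mu_t-P_t^\ast\mu$, so $\nu_0=0$, $\int_0^T|\nu_t|_{TV}(L^6(0,1))\,dt<\infty$, and $\int\varphi\,d\nu_t=\int_0^t\int K\varphi\,d\nu_s\,ds$ for all $\varphi\in D(K,\mathcal C_b(L^6(0,1)))$. By the separation remark above it suffices to prove $\int\psi\,d\nu_T=0$ for every $T>0$ and every $\psi=R(\lambda,K)^2f$; such a $\psi$ satisfies $\psi,K\psi\in D(K,\mathcal C_b(L^6(0,1)))$ and $K^2\psi\in\mathcal C_b(L^6(0,1))$. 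Fix $T>0$ and consider $\Phi(s):=\int P_{T-s}\psi\,d\mu_s$, $s\in[0,T]$. Morally $\Phi$ is constant, because $\tfrac{d}{dr}P_r\psi=P_rK\psi$, $KP_{T-s}\psi=P_{T-s}K\psi$ and \eqref{e.2.6} applied to $\varphi=P_{T-s}\psi\in D(K)$ make the two contributions to $\Phi'(s)$ cancel. Rigorously, I would telescope $\Phi(T)-\Phi(0)$ over a partition $0=t_0<\dots<t_N=T$: writing $\Phi(t_{j+1})-\Phi(t_j)$ by means of \eqref{e.2.6} with $\varphi=P_{T-t_{j+1}}\psi$ and of the identity $P_{T-t_{j+1}}\psi-P_{T-t_j}\psi=-\int_{T-t_{j+1}}^{T-t_j}P_rK\psi\,dr$, and letting the mesh tend to $0$, one finds that both resulting Riemann sums converge to $\int_0^T\big(\int P_{T-\tau}K\psi\,d\mu_\tau\big)\,d\tau$ — the first by the uniform continuity and boundedness of $r\mapsto P_rK\psi(x)$ together with dominated convergence against $s\mapsto\|K\psi\|_0|\mu_s|_{TV}(L^6(0,1))\in L^1(0,T)$, the second because \eqref{e.2.6} applied to $\varphi=P_{T-\tau}K\psi\in D(K)$ bounds $\big|\int P_{T-\tau}K\psi\,d(\mu_{t_j}-\mu_\tau)\big|$ by $\|K^2\psi\|_0\int_{t_j}^{t_{j+1}}|\mu_\sigma|_{TV}(L^6(0,1))\,d\sigma$ for $\tau\in[t_j,t_{j+1}]$. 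Hence $\Phi(T)=\Phi(0)$, i.e. $\int\psi\,d\mu_T=\int P_T\psi\,d\mu=\int\psi\,d(P_T^\ast\mu)$, so $\nu_T=0$, and therefore $\mu_t=P_t^\ast\mu$ for all $t\ge0$.

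\textbf{Main obstacle.} I expect the genuine difficulty to be the two limit passages in the last step: because $t\mapsto\mu_t$ is only measurable with $\int_0^T|\mu_t|_{TV}(L^6(0,1))\,dt<\infty$ — not continuous in any strong sense — and $P_r$ is not strongly continuous for $\|\cdot\|_0$, uniform-in-$x$ estimates are unavailable, and the convergence of the Riemann sums must be obtained by combining the pointwise continuity of $r\mapsto P_rK\psi(x)$, the uniform bounds $\|P_rK^j\psi\|_0\le\|K^j\psi\|_0$ ($j=0,1,2$), the $L^1(0,T)$ majorant $|\mu_\cdot|_{TV}(L^6(0,1))$, and the joint measurability of the maps $(\sigma,\tau)\mapsto\int P_{T-\tau}K^j\psi\,d\mu_\sigma$; all of these hold under the stated hypotheses, but the bookkeeping is where the real work lies.
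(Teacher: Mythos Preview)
Your proposal is correct and follows precisely the scheme of \cite{Manca07}, which is exactly what the paper does: its proof of this theorem consists only of the remark that the argument of Theorem~1.2 in \cite{Manca07} carries over verbatim once one replaces the Hilbert space $H$ by $L^6(0,1)$ and uniform continuity by continuity. Your sketch (kernel representation for stability, the identity $P_t\varphi-\varphi=\int_0^tP_sK\varphi\,ds$ for existence, and the ``$\Phi(s)=\int P_{T-s}\psi\,d\mu_s$ is constant'' telescoping argument on test functions $\psi\in R(\lambda,K)^2(\mathcal C_b)$ for uniqueness) is the content of that reference, and your identification of the delicate point---the Riemann-sum limits under only the $L^1$-in-time control \eqref{e.5b} and mere pointwise continuity of $r\mapsto P_r K\psi$---is accurate.
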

\begin{proof}
The proof of this theorem is very similar to the proof of  Theorem 1.2 of \cite{Manca07}.
We stress that in  \cite{Manca07} the space $L^6(0,1)$ is replaced by a separable Hilbert space $H$ and that $\mathcal C_b(L^6(0,1))$ is replaced by the space $C_b(H)$, the Banach space of all the uniformly continuous and bounded functions $\varphi:H\to \Rset$, endowed with the supremum norm.
However, one can see that all the results remain true with continuity replacing uniform continuity and $L^6(0,1)$ replacing $H$ (see, also,  \cite{Manca08}). 
\end{proof}

Thanks to Propositions \ref{p.B.2.1} and \ref{p.B.2.9}, the above theorem can be extended to the space  $\mathcal C_{b,V}(L^6(0,1))$.
\begin{Theorem} \label{T.B.1}
Let $( {P}_t)_{t\geq0}$ be the semigroup defined by \eqref{e.B.38a} 
and   let  us consider its infinitesimal generator $(K,D(K,\mathcal C_{b,V}(L^6(0,1))))$   
  given    by  \eqref{e.B.0}.
Then,  the formula 
\[
  \langle \varphi,  {P}_t^*F\rangle_{\sigma(\mathcal C_{b,V}(L^6(0,1)),\,(\mathcal C_{b,V}(L^6(0,1)))^*)} = \langle  {P}_t\varphi, F\rangle_{\sigma(\mathcal C_{b,V}(L^6(0,1)),\, (\mathcal C_{b,V}(L^6(0,1)))^*)}
\] 
defines a semigroup $P_t^*$, $t\geq0$ of linear and continuous operators  on $(\mathcal  C_{b,V}(L^6(0,1))^*$ %
which is stable on $\mathcal M_{V}(L^6(0,1))$. 
Moreover,  for any measure $\mu\in \mathcal M_{V}(L^6(0,1))$ there exists a unique family   
   $\{\mu_t,\;t\geq0\}\subset \mathcal M_{V}(L^6(0,1))$  %
such that \eqref{e.B.5b} holds
and  
\begin{multline}   \label{e.B.8} 
   \int_{L^6(0,1)}\varphi(x)\mu_t(dx)- \int_{L^6(0,1)}\varphi(x)\mu(dx)\\=
   \int_0^t\left( \int_{L^6(0,1)} {K}\varphi(x)\mu_s(dx)   \right)ds  
\end{multline}
 for any $t\geq0$, $\varphi\in D(K,\mathcal C_{b,V}(L^6(0,1)))$.
Finally, the solution of \eqref{e.B.5b}, \eqref{e.B.8}  is given by $  {P}_t^*\mu$, ${t\geq0}$.  
\end{Theorem}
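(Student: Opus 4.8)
The plan is to follow the scheme of Theorem~\ref{T.1} (equivalently Theorem~1.2 of \cite{Manca07}), replacing the supremum norm by $\|\cdot\|_{0,V}$ and invoking Propositions~\ref{p.B.2.1} and \ref{p.B.2.9} wherever boundedness of the semigroup and properties of its generator are needed. First I would check that $P_t^*$ is well defined on $(\mathcal C_{b,V}(L^6(0,1)))^*$ and leaves $\mathcal M_V(L^6(0,1))$ invariant. Given $\mu\in\mathcal M_V(L^6(0,1))$, the linear functional $\varphi\mapsto\int_{L^6(0,1)}P_t\varphi\,d\mu$ is continuous on $\mathcal C_{b,V}(L^6(0,1))$ by Proposition~\ref{p.B.2.1}(i); writing $P_t\varphi(x)=\int\varphi(y)\pi_t(x,dy)$ (Proposition~\ref{p.B.2.1}(ii)) and applying Fubini's theorem, this functional is represented by the finite signed measure $\mu_t:=\int_{L^6(0,1)}\pi_t(x,\cdot)\,\mu(dx)$, which lies in $\mathcal M_V(L^6(0,1))$ and satisfies the quantitative bound $\int_{L^6(0,1)}(1+V)\,d|\mu_t|_{TV}\le c_0e^{\omega_0t}\int_{L^6(0,1)}(1+V)\,d|\mu|_{TV}$. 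The identity $P_{t+s}=P_tP_s$ (Proposition~\ref{p.B.2.1}(iv)) then gives the semigroup law for $P_t^*$, and stability on $\mathcal M_V(L^6(0,1))$ is the statement just proved.

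For the existence part, set $\mu_t:=P_t^*\mu$. The integral condition \eqref{e.B.5b} is the previous estimate integrated over $[0,T]$. For the Fokker--Planck identity \eqref{e.B.8}, fix $\varphi\in D(K,\mathcal C_{b,V}(L^6(0,1)))$: by Proposition~\ref{p.B.2.9}(ii) the map $s\mapsto P_s\varphi(x)$ is $C^1$ with $\frac{d}{ds}P_s\varphi(x)=P_sK\varphi(x)$, hence $P_t\varphi(x)-\varphi(x)=\int_0^tP_sK\varphi(x)\,ds$. Integrating in $x$ against $\mu$ and exchanging the order of integration (legitimate since $|P_sK\varphi(x)|\le c_0e^{\omega_0s}\|K\varphi\|_{0,V}(1+V(x))$ and $\mu\in\mathcal M_V(L^6(0,1))$) yields
\begin{multline*}
\int_{L^6(0,1)}\varphi\,d\mu_t-\int_{L^6(0,1)}\varphi\,d\mu\\
=\int_0^t\!\int_{L^6(0,1)}P_sK\varphi\,d\mu\,ds=\int_0^t\!\int_{L^6(0,1)}K\varphi\,d\mu_s\,ds,
\end{multline*}
the last step by the very definition of $P_s^*$. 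This proves existence and simultaneously exhibits $P_t^*\mu$ as a solution.

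For uniqueness, let $\{\mu_t\}$ and $\{\nu_t\}$ both satisfy \eqref{e.B.5b} and \eqref{e.B.8}, and put $\zeta_t:=\mu_t-\nu_t$, so $\zeta_0=0$, $\{\zeta_t\}$ obeys \eqref{e.B.5b}, and $\int\varphi\,d\zeta_t=\int_0^t\int K\varphi\,d\zeta_s\,ds$ for all $\varphi\in D(K,\mathcal C_{b,V}(L^6(0,1)))$ and all $t\ge0$. The key step is to Laplace-transform this identity in $t$: for $\lambda>\omega_0$, set $\rho_\lambda:=\int_0^\infty e^{-\lambda t}\zeta_t\,dt$ (a finite signed measure in $\mathcal M_V(L^6(0,1))$, once sub-exponential growth of $t\mapsto\int(1+V)\,d|\zeta_t|_{TV}$ has been established). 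Fubini turns the identity into $\lambda\int\varphi\,d\rho_\lambda=\int K\varphi\,d\rho_\lambda$, i.e. $\int_{L^6(0,1)}(\lambda\varphi-K\varphi)\,d\rho_\lambda=0$ for all $\varphi\in D(K,\mathcal C_{b,V}(L^6(0,1)))$. By Proposition~\ref{p.B.2.9}(iv) the operator $\lambda I-K$ maps $D(K,\mathcal C_{b,V}(L^6(0,1)))$ onto $\mathcal C_{b,V}(L^6(0,1))$, so $\int f\,d\rho_\lambda=0$ for every $f\in\mathcal C_{b,V}(L^6(0,1))\supset\mathcal C_b(L^6(0,1))$; since $L^6(0,1)$ is a separable Banach space, hence Polish, bounded continuous functions separate finite signed Borel measures and therefore $\rho_\lambda=0$ for all large $\lambda$. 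Injectivity of the Laplace transform gives $\zeta_t=0$ for a.e.\ $t$, and continuity of $t\mapsto\int\varphi\,d\zeta_t$ (it equals $\int_0^t\int K\varphi\,d\zeta_s\,ds$, with $s\mapsto\int K\varphi\,d\zeta_s$ locally integrable by \eqref{e.B.5b}) upgrades this to all $t$; hence $\mu_t=\nu_t$, and the unique solution is $P_t^*\mu$.

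The genuinely nontrivial analytic inputs --- the resolvent and differentiability estimates for $P_t^m$ (Proposition~\ref{p.B.5}) and the $\pi$-core property (Theorem~\ref{T.B.2}) --- are already in place, so the remaining work is measure-theoretic, and I expect the main obstacle to be precisely the point flagged in the uniqueness argument: verifying that $\rho_\lambda$ is a bona fide finite signed measure, i.e.\ controlling the growth in $t$ of the total variation of $\zeta_t$ inside the weak class singled out by \eqref{e.B.5b}, and justifying the passage from an almost-everywhere statement in $t$ to an everywhere statement; this is carried out as in Theorem~1.2 of \cite{Manca07}. Finally, once \eqref{e.B.8} holds for all $\varphi\in D(K,\mathcal C_{b,V}(L^6(0,1)))$ it holds in particular for $\varphi\in\mathcal E_A(H)$ with $K\varphi=K_0\varphi$ (Theorem~\ref{T.B.2}), which is the form required by Theorem~\ref{T.B.1.4}; conversely, the $\pi$-core property combined with dominated convergence against the weight $1+V$ shows that any family solving \eqref{e.B.2.5a} for exponential test functions automatically solves \eqref{e.B.8}, so the uniqueness proved here transfers to Theorem~\ref{T.B.1.4}.
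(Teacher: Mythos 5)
Your treatment of stability and of existence matches the paper's in substance. For stability, the paper first invokes Theorem \ref{T.1} to know that $P_t^*\mu$ is a measure and then upgrades to $\mathcal M_V(L^6(0,1))$ by a truncation $0\le\psi_n\le V$, $\psi_n\to V$, plus dominated convergence and the bound $P_tV\le c(1+V)$; your direct construction of $\mu_t=\int\pi_t(x,\cdot)\,\mu(dx)$ via Proposition \ref{p.B.2.1}(ii) and Fubini amounts to the same thing. The existence part, differentiating $s\mapsto\int P_s\varphi\,d\mu$ via Proposition \ref{p.B.2.9}(ii) and using the fundamental theorem of calculus, is exactly the paper's computation. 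Where you genuinely diverge is uniqueness. The paper does not rerun any transform argument in the weighted space: it simply observes that $D(K,\mathcal C_b(L^6(0,1)))\subset D(K,\mathcal C_{b,V}(L^6(0,1)))$ and that \eqref{e.B.5b} implies \eqref{e.5b}, so any family in $\mathcal M_V(L^6(0,1))$ solving \eqref{e.B.8} is in particular a solution of the unweighted problem, and the uniqueness assertion of Theorem \ref{T.1} (whose proof is the one deferred to Theorem 1.2 of \cite{Manca07}) gives $\mu_t=P_t^*\mu$ at once. This reduction is the reason Theorem \ref{T.1} is stated first.

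Your direct route has a genuine gap exactly at the point you flag, and it is not one that can be ``carried out as in \cite{Manca07}'' in the form you describe: the object $\rho_\lambda=\int_0^\infty e^{-\lambda t}\zeta_t\,dt$ need not exist. Condition \eqref{e.B.5b} is purely local in time --- it gives $\int_0^T\int(1+V)\,d|\zeta_t|_{TV}\,dt<\infty$ for each $T$ and says nothing about the growth of $|\zeta_t|_{TV}$ as $t\to\infty$ --- so sub-exponential growth is not something that can be ``established'' from the hypotheses; there is no a priori bound available in the admissible class. Note that the hypothesis \eqref{e.5b} of Theorem \ref{T.1} is equally local, so whatever argument is used there must itself avoid an infinite-horizon transform (finite-horizon version with a boundary term, or a duality argument with $s\mapsto\int P_{t-s}\varphi\,d\zeta_s$); the clean repair for the present theorem is the paper's embedding into the unweighted problem rather than redoing the transform with the weight. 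The remaining skeleton of your uniqueness step is sound (surjectivity of $\lambda I-K$ from Proposition \ref{p.B.2.9}(iv), separation of finite Borel measures on the Polish space $L^6(0,1)$ by bounded continuous functions, and the a.e.-to-everywhere upgrade via absolute continuity of $t\mapsto\int\varphi\,d\zeta_t$), and your closing remarks about exponential test functions belong to the proof of Theorem \ref{T.B.1.4}, not to this statement.
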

\begin{proof}
Since $P_t$, $t\geq0$ acts on $\mathcal C_{b,V}(L^6(0,1))$ (see Proposition \ref{p.B.2.1}), it follows easily that $P_t^*$ acts on $(\mathcal C_{b,V}(L^6(0,1)))^*$.
Let us show that $P_t^*$ is stable on $\mathcal M_{V}(L^6(0,1))$.
Take $\mu\in\mathcal M_{V}(L^6(0,1))$.
By the linearity of $P_t^*$, it is sufficient to take $\mu$ non negative.
By Theorem \ref{T.1} we have $P_t^*\mu\in\mathcal M(L^6(0,1))$.  
Consider a sequence of functions $(\psi_n)_{n\in \Nset}\subset C_b(L^6(0,1))$ such that $\psi_n(x)\to V(x)$ as $n\to \infty$ and $0\leq \psi(x)\leq V(x)$, for any $x\in L^6(0,1)$.
By Proposition \ref{p.B.2.1} we have that 
\begin{eqnarray*}
   \lim_{n\to\infty}P_t\psi_n(x)&=&\lim_{n\to\infty}\int_{L^6(0,1)}\psi_n(y)\pi_t(x,dy)
   \\
   &=&  \int_{L^6(0,1)}V(y)\pi_t(x,dy)
   =P_tV(x)
\end{eqnarray*}
and
\begin{equation} \label{e.B.63}
 P_t\psi_n(x)=  \int_{L^6(0,1)}\psi_n(y)\pi_t(x,dy) \leq  \int_{L^6(0,1)}V(y)\pi_t(x,dy) \leq c(1+V(x))
\end{equation}
hold for any $x\in L^6(0,1)$ and for some $c>0$.
Hence, since $\mu\in \mathcal M_V(L^6(0,1))$ by the dominated convergence Theorem we have
\begin{eqnarray*}
  \int_{L^6(0,1)} V(x)P_t^*\mu(dx)&=&\lim_{n\to\infty} \int_{L^6(0,1)} \psi_n(x) P_t^*\mu(dx)\\
  &=& \lim_{n\to\infty}  \int_{L^6(0,1)}P_t \psi_n(x) \mu(dx)\\
 &=&\lim_{n\to\infty} \int_{L^6(0,1)} \left(\int_{L^6(0,1)} \psi_n (y)\pi_t(x,dy) \right)\mu(dx) \\
 &=&  \int_{L^6(0,1)} P_tV(x)\mu(dx). 
\end{eqnarray*}
Then, by \eqref{e.B.63}
\[
   \int_{L^6(0,1)} V(x)P_t^*\mu(dx)\leq \int_{L^6(0,1)} \leq c(1+V(x))\mu(dx)<\infty,
\]
since $\mu \in\mathcal M_V(L^6(0,1))$.
Recalling that $P_t^*\mu\in \mathcal M(L^6(0,1))$, it follows  $P_t^*\mu\in\mathcal M_V(L^6(0,1))$.

Let us now prove the remaining part of the theorem.
We split the proof in two steps.\\
{\bf Step 1: Existence of a solution of \eqref{e.B.5b}, \eqref{e.B.8}}.
Let us fix $\mu\in \mathcal M_V(L^6(0,1))$.
By the first part of the Theorem, we have $P_s^*\mu\in \mathcal M_V(L^6(0,1))$, for any $s\geq0$.
We now show that for any $\varphi\in D(K, \mathcal C_{b,V}(L^6(0,1)))$
the function
\[
  [0,\infty)\to\Rset,\quad s\mapsto \int_{L^6(0,1)}P_s\varphi(x)\mu(dx) 
\]
is differentiable, with continuous differential 
\[
  t\mapsto \int_{L^6(0,1)}K\varphi(x)P_s^*\mu(dx). 
\]
By \eqref{e.B.0} we have, for any $\varphi\in D(K, \mathcal C_{b,V}(L^6(0,1)))$, 
\begin{eqnarray*}
  && \lim_{h\to 0^+} \frac{1}{h}\left(\int_{L^6(0,1)}P_{s+h}\varphi(x)\mu(dx) -\int_{L^6(0,1)}P_s\varphi(x)\mu(dx)\right)
    \notag
\\
 &&\qquad = \lim_{h\to 0^+} \frac{1}{h}\left(\int_{L^6(0,1)}KP_h\varphi(x)P_s^*\mu(dx) -\int_{L^6(0,1)} \varphi(x)P_s^*\mu(dx)\right)\notag
\\
 &&\qquad = \lim_{h\to 0^+} \int_{L^6(0,1)}\frac{P_h\varphi(x)-\varphi(x)}{h}P_s^*\mu(dx)\notag
\\
 &&\qquad  = \int_{L^6(0,1)}K\varphi(x)P_s^*\mu(dx) \notag
\end{eqnarray*}
Since $K\varphi\in \mathcal C_{b,V}(L^6(0,1))$, we have
\[
  \int_{L^6(0,1)}K\varphi(x)P_s^*\mu(dx)= \int_{L^6(0,1)}P_sK\varphi(x)\mu(dx).
\]
Then, by Proposition \ref{p.B.2.9}, this is a continuous as function of $s$.
By the fundamental theorem of calculus it follows that $P_t^*\mu$, $t\geq0$ solves 
\eqref{e.B.5b}, \eqref{e.B.8}.\\
{\bf Uniqueness of a solution}.
Assume that $\{\mu_t,\,t\geq0\}\subset \mathcal M_V(L^6(0,1))$ fulfills \eqref{e.B.5b}, \eqref{e.B.8}. 
It is straightforward to show that $D(K,C_b(L^6(0,1)))\subset D(K,C_{b,V}(L^6(0,1)))$.
Then \eqref{e.B.8} holds for any $\varphi\in D(K,C_b(L^6(0,1)))$.
It is also obvious that  \eqref{e.B.5b} implies \eqref{e.5b}.
Then, by Theorem \ref{T.1}, if follows that $\mu_t=P_t^*\mu$, for any $t\geq0$.   
\end{proof}

\subsection{Proof of Theorem \ref{T.B.1.4}}

Take $\mu\in \mathcal M_V(L^6(0,1))$. 
As before, we split the proof in two steps.

\noindent
{\bf Existence of a solution of \eqref{e.B.5b}, \eqref{e.B.2.5a}.}
Fix $\mu\in \mathcal M_V(L^6(0,1))$.
By Theorem \ref{T.B.1} we have that the family $P_t^*\mu$, $t\geq0 $ fulfills \eqref{e.B.5b}.
On the other hand, by Theorem \ref{T.B.2} we have $\mathcal E_A(H)\subset D(K, \mathcal C_{b,V}(L^6(0,1)))$
and $K\varphi=K_0\varphi$ for any $\varphi\in \mathcal E_A(H)$.
Then, by \eqref{e.B.8} it follows that \eqref{e.B.2.5a} holds for any $t\geq0$, $\varphi\in \mathcal E_A(H)$.
Then, $P_t^*\mu$, $t\geq0 $ is solution of \eqref{e.B.5b}, \eqref{e.B.2.5a}.

\noindent
{\bf Uniqueness of the solution.}
Assume that $\{\mu_t,\;t\geq0\}\subset \mathcal M_V(L^6(0,1))$ fulfills \eqref{e.B.5b}, \eqref{e.B.2.5a}.
Take $\varphi\in \mathcal C_{b,V}(L^6(0,1))$.
By Theorem \ref{T.B.2}  there exist $m\in\Nset$ and   an $m$-indexed sequence 
 $(\varphi_{n_1,\ldots,n_m})_{n_1,\ldots,n_m\in\Nset}\subset \mathcal E_A(H)$ such that
\begin{equation*} 
   \lim_{n_1\to \infty}\cdots\lim_{n_m\to \infty}  \frac{ \varphi_{n_1,\ldots,n_m}}{1+V} \stackrel{\pi}{=}\frac{ \varphi}{1+V} 
\end{equation*}
and
 \begin{equation*}
\lim_{n_1\to \infty}\cdots\lim_{n_m\to \infty}  \frac{ K_0\varphi_{n_1,\ldots,n_m}}{1+V} \stackrel{\pi}{=}\frac{ K\varphi}{1+V} 
\end{equation*}
Then, since $\{\mu_t,\;t\geq0\} \subset \mathcal M_V(L^6(0,1))$, by the dominated convergence theorem we have
\[
  \lim_{n_1\to \infty}\cdots\lim_{n_m\to \infty} \left( \int_{L^6(0,1)}\varphi_{n_1,\ldots,n_m}(x)\mu_t(dx)- \int_{L^6(0,1)}\varphi_{n_1,\ldots,n_m}(x)\mu(dx)     \right)
\]
\[
 =  \int_{L^6(0,1)}\varphi(x)\mu_t(dx)- \int_{L^6(0,1)}\varphi(x)\mu(dx)    
\]
Similarly, for any $s\in [0,t]$ we have
\[
 \lim_{n_1\to \infty}\cdots\lim_{n_m\to \infty}\int_{L^6(0,1)}K_0\varphi_{n_1,\ldots,n_m}(x)\mu_s(dx)
\]
\[
   =\int_{L^6(0,1)}K \varphi (x)\mu_s(dx).
\]
Therefore, by  \eqref{e.B.5b} we can still apply the dominated convergence theorem to find
\[
   \lim_{n_1\to \infty}\cdots\lim_{n_m\to \infty}\int_0^t\left(\int_{L^6(0,1)}K_0\varphi_{n_1,\ldots,n_m}(x)\mu_s(dx)\right)ds
\]
\[
   =\int_0^t\left(\int_{L^6(0,1)}K \varphi (x)\mu_s(dx)\right)ds.
\] 
Then, $\{\mu_t,\;t\geq0\} $ is solution of \eqref{e.B.5b} and  \eqref{e.B.8}, 
for any $\varphi\in D(K,C_{b,V}(L^6(0,1)))$.
But by Theorem \ref{T.B.1} such a solution is unique, thus $ \mu_t  $ must coincide with 
$  P_t^*\mu   $, $\forall t\geq0$.
The proof is complete. 
\qed

\begin{bibdiv}
 \begin{biblist}

\bib{BR01}{article}{
      author={Bogachev, Vladimir~I.},
      author={R{\"o}ckner, Michael},
       title={Elliptic equations for measures on infinite-dimensional spaces
  and applications},
        date={2001},
     journal={Probab. Theory Related Fields},
      volume={120},
      number={4},
       pages={445\ndash 496},
}

\bib{Cerrai}{article}{
      author={Cerrai, Sandra},
       title={A {H}ille-{Y}osida theorem for weakly continuous semigroups},
        date={1994},
     journal={Semigroup Forum},
      volume={49},
      number={3},
       pages={349\ndash 367},
}

\bib{DPDT}{article}{
      author={Da~Prato, Giuseppe},
      author={Debussche, Arnaud},
      author={Temam, Roger},
       title={Stochastic {B}urgers' equation},
        date={1994},
     journal={NoDEA Nonlinear Differential Equations Appl.},
      volume={1},
      number={4},
       pages={389\ndash 402},
}

\bib{DPD98}{article}{
      author={Da~Prato, Giuseppe},
      author={Debussche, Arnaud},
       title={Differentiability of the transition semigroup of the stochastic
  {B}urgers equation, and application to the corresponding {H}amilton-{J}acobi
  equation},
        date={1998},
     journal={Atti Accad. Naz. Lincei Cl. Sci. Fis. Mat. Natur. Rend. Lincei
  (9) Mat. Appl.},
      volume={9},
      number={4},
       pages={267\ndash 277 (1999)},
}

\bib{DPD00}{article}{
      author={Da~Prato, Giuseppe},
      author={Debussche, Arnaud},
       title={Dynamic programming for the stochastic {B}urgers equation},
        date={2000},
     journal={Ann. Mat. Pura Appl. (4)},
      volume={178},
       pages={143\ndash 174},
}

\bib{DPD00a}{article}{
      author={Da~Prato, Giuseppe},
      author={Debussche, Arnaud},
       title={Dynamic programming for the stochastic {N}avier-{S}tokes
  equations},
        date={2000},
     journal={M2AN Math. Model. Numer. Anal.},
      volume={34},
      number={2},
       pages={459\ndash 475},
        note={Special issue for R. Temam's 60th birthday},
}

\bib{DPD03}{article}{
      author={Da~Prato, Giuseppe},
      author={Debussche, Arnaud},
       title={Ergodicity for the 3{D} stochastic {N}avier-{S}tokes equations},
        date={2003},
     journal={J. Math. Pures Appl. (9)},
      volume={82},
      number={8},
       pages={877\ndash 947},
}

\bib{DPD07}{article}{
      author={Da~Prato, Giuseppe},
      author={Debussche, Arnaud},
       title={{$m$}-dissipativity of {K}olmogorov operators corresponding to
  {B}urgers equations with space-time white noise},
        date={2007},
     journal={Potential Anal.},
      volume={26},
      number={1},
       pages={31\ndash 55},
}

\bib{DPT01}{article}{
      author={Da~Prato, Giuseppe},
      author={Tubaro, Luciano},
       title={Some results about dissipativity of {K}olmogorov operators},
        date={2001},
     journal={Czechoslovak Math. J.},
      volume={51(126)},
      number={4},
       pages={685\ndash 699},
}

\bib{DPZ92}{book}{
      author={Da~Prato, Giuseppe},
      author={Zabczyk, Jerzy},
       title={Stochastic equations in infinite dimensions},
      series={Encyclopedia of Mathematics and its Applications},
   publisher={Cambridge University Press},
        date={1992},
      volume={44},
}

\bib{DPZ96}{book}{
      author={Da~Prato, Giuseppe},
      author={Zabczyk, Jerzy},
       title={Ergodicity for infinite-dimensional systems},
      series={London Mathematical Society Lecture Note Series},
   publisher={Cambridge University Press},
        date={1996},
      volume={229},
}

\bib{DPZ97}{article}{
      author={Da~Prato, Giuseppe},
      author={Zabczyk, Jerzy},
       title={Differentiability of the {F}eynman-{K}ac semigroup and a control
  application},
        date={1997},
     journal={Atti Accad. Naz. Lincei Cl. Sci. Fis. Mat. Natur. Rend. Lincei
  (9) Mat. Appl.},
      volume={8},
      number={3},
       pages={183\ndash 188},
}

\bib{DPZ02}{book}{
      author={Da~Prato, Giuseppe},
      author={Zabczyk, Jerzy},
       title={Second order partial differential equations in {H}ilbert spaces},
      series={London Mathematical Society Lecture Note Series},
   publisher={Cambridge University Press},
        date={2002},
      volume={293},
}

\bib{DP04}{book}{
      author={Da~Prato, Giuseppe},
       title={Kolmogorov equations for stochastic {PDE}s},
      series={Advanced Courses in Mathematics. CRM Barcelona},
   publisher={Birkh\"auser Verlag},
        date={2004},
}

\bib{DO}{article}{
      author={Debussche, Arnaud},
      author={Odasso, Cyril},
       title={Markov solutions for the 3{D} stochastic {N}avier-{S}tokes
  equations with state dependent noise},
        date={2006},
     journal={J. Evol. Equ.},
      volume={6},
      number={2},
       pages={305\ndash 324},
}

\bib{GK01}{article}{
      author={Goldys, B.},
      author={Kocan, M.},
       title={Diffusion semigroups in spaces of continuous functions with mixed
  topology},
        date={2001},
     journal={J. Differential Equations},
      volume={173},
      number={1},
       pages={17\ndash 39},
}

\bib{Henry}{book}{
      author={Henry, Daniel},
       title={Geometric theory of semilinear parabolic equations},
      series={Lecture Notes in Mathematics},
   publisher={Springer-Verlag},
     address={Berlin},
        date={1981},
      volume={840},
}

\bib{LaTh}{article}{
      author={Lant, T.},
      author={Thieme, H.~R.},
       title={Markov transition functions and semigroups of measures},
        date={2007},
     journal={Semigroup Forum},
      volume={74},
      number={3},
       pages={337\ndash 369},
}

\bib{Manca06}{article}{
      author={Manca, Luigi},
       title={On a class of stochastic semilinear {PDE}s},
        date={2006},
     journal={Stoch. Anal. Appl.},
      volume={24},
      number={2},
       pages={399\ndash 426},
}

\bib{Manca07}{unpublished}{
      author={Manca, Luigi},
       title={Kolmogorov equations for measures},
        date={2007},
        note={To appear on {\em Journal of Evolution Equations}},
}

\bib{Manca07a}{unpublished}{
      author={Manca, Luigi},
       title={Measure-valued equations for {K}olmogorov operators with
  unbounded coefficients},
        date={2007},
        note={Preprint},
}

\bib{Mancacontrol07}{unpublished}{
      author={Manca, Luigi},
       title={On dynamic programming approach for the $3d$-{N}avier-{S}tokes
  equations},
        date={2007},
        note={To be published on Appl. Math. Optim.},
}

\bib{Manca08}{thesis}{
      author={Manca, Luigi},
       title={Kolmogorov operators in spaces of continuous functions and
  equations for measures},
        type={Ph.D. Thesis},
        date={2008},
}

\bib{Priola}{article}{
      author={Priola, Enrico},
       title={On a class of {M}arkov type semigroups in spaces of uniformly
  continuous and bounded functions},
        date={1999},
     journal={Studia Math.},
      volume={136},
      number={3},
       pages={271\ndash 295},
}

\bib{RS04}{article}{
      author={R{\"o}ckner, Michael},
      author={Sobol, Zeev},
       title={A new approach to {K}olmogorov equations in infinite dimensions
  and applications to stochastic generalized {B}urgers equations},
        date={2004},
     journal={C. R. Math. Acad. Sci. Paris},
      volume={338},
      number={12},
       pages={945\ndash 949},
}

\bib{RS06}{article}{
      author={R{\"o}ckner, Michael},
      author={Sobol, Zeev},
       title={Kolmogorov equations in infinite dimensions: well-posedness and
  regularity of solutions, with applications to stochastic generalized
  {B}urgers equations},
        date={2006},
     journal={Ann. Probab.},
      volume={34},
      number={2},
       pages={663\ndash 727},
}

\bib{Rothe}{book}{
      author={Rothe, Franz},
       title={Global solutions of reaction-diffusion systems},
      series={Lecture Notes in Mathematics},
   publisher={Springer-Verlag},
     address={Berlin},
        date={1984},
      volume={1072},
}

 \end{biblist}

\end{bibdiv}

\end{document}